\documentclass[11pt]{amsart}
\usepackage{saltscribe}
\usepackage[shortlabels]{enumitem}

\newcommand{\bd}{\mathbf{d}}
\newcommand{\oR}{\bR^4_\leq}

\newcommand{\ce}{\mathrm{Cone}_e}
\newcommand{\bg}{\bar{\gamma}}

\newcommand{\li}{\mathrm{Li}}
\newcommand{\ls}{\mathrm{Ls}}

\begin{document}
\title{Large Deviation Principle for Last Passage Percolation Models}
\author{Pranay Agarwal}
\address{Pranay Agarwal, Department of Mathematics, University of Toronto, Toronto, Canada.}
\email{pranay.agarwal@mail.utoronto.ca}

\begin{abstract}
	Study of the KPZ universality class has seen the emergence of universal objects over the past decade which arise as the scaling limit of the member models. One such object is the directed landscape, and it is known that exactly solvable last passage percolation (LPP) models converge to the directed landscape under the KPZ scaling (see \cite{DV21}). Large deviations of the directed landscape on the metric level were recently studied in \cite{DDV24}, which also provides a general framework for establishing such large deviation principle (LDP). The main goal of the article is to apply and refine that framework to establish a LDP for LPP models at the metric level without relying on exact solvability. We then use the LDP on the metric level to establish a LDP for geodesics in these models, thus providing a streamlined way to study large transversal fluctuations of geodesics in these models. We briefly touch on how the theory extends to other planar models like directed polymers and Poisson LPP.
\end{abstract}

\maketitle

\setcounter{tocdepth}{1}
\tableofcontents

\section{Introduction} \label{s: intro}
We consider last passage percolation (LPP) models on the discrete lattice $\bZ^2$ with independent and identically distributed (i.i.d.)\,non-negative weights. In these models, we assign i.i.d.\,random weights $(w_{ij} \sim \mu)$ to each vertex $(i,j)$ of $\bZ^2$. Weight/length of a path in $\bZ^2$ is the sum of the weight of the vertices it passes through (both endpoints included). Given a pair of ordered vertices $u,v$ in $\bZ^2$, we are interested in the maximal length of an upright path between the two vertices. This maximal length is referred to as last passage time $T\br{u,v}$ and the paths which achieve this maximal weight are called geodesics. We extend last passage times to points $(u,v) =(x_1,y_1,x_2,y_2) \in \bR^4$ as follows
\[
	T(u, v) := \begin{cases}
		T((\ceil{x_1}, \ceil{y_1}), (\floor{x_2}, \floor{y_2})) & \mathrm{if} \; \ceil{x_1} \leq \floor{x_2} \;\mathrm{and} \; \ceil{y_1} \leq \floor{y_2},\\
		0 & \mathrm{otherwise.}
	\end{cases}
\] 
It is often easier to work with such models after a change of coordinate axes. Hence, we shall identify points $(a,b) \in \bR^2$ with their space time coordinates $(x,t) := (b-a,b+a)$. Thus, the first quadrant gets mapped to $$\vee :=  \bc{(x,t) \in \bR \times [0, \infty) : \abs{x} \leq t}.$$ For $p \in \bR^2$, we define $\vee_p$ as the translation of $\vee$ by $p$. Thus, $\vee_p = \bc{p + u : u \in \vee}$.

Under this change of coordinates, last passage times define upper semi-continuous functions on $\oR := \bc{(p;q) \in \bR^4 \:\vert\: q \in \vee_p}$. As LPP models assign random lengths to paths, one would expect them to satisfy some form of the triangle inequality. However, last passage times cannot be defined in a way so that they satisfy both upper semi-continuity and a triangle inequality. Let $\cE$ denote the set of all non-negative upper semi-continuous functions $e$ on $\oR$ which satisfy a generalized reverse triangle inequality, 
\begin{equation} \label{eq: triangle}
	e(p;q) \geq e(p;r) +e(r^+, q),
\end{equation}
for $(p;q), (p;r), (r;q) \in \oR$, where 
\begin{equation} \label{def: plus}	
e(r^+, q) := \begin{cases}
	\sup \bc{e(r'; q) \:\vert\: r' \in \vee_r \setminus \bc{r}}  \; &\mathrm{if} \;r \neq q,\\
	0 \;\;&\mathrm{otherwise}.
\end{cases}
\end{equation}  
Generalizing the triangle inequality in this way allows last passage times to retain the property while preserving upper semi-continuity. Hence, last passage times can be viewed as random instances of elements of $\cE$. Let $u =(p ,q) \in \oR$, we shall focus on the rescaled passage times
\[T_n(u) := T(np,nq)/n .\]
Let us now discuss some characteristic properties of LPP models before we discuss this further.
 
We begin by introducing the shape function, the study of which has been a fundamental problem in LPP models. The shape function $F :\vee \to [0, \infty]$ is defined as
\begin{equation} \label{def: shape}
	F(v) = \sup_n {\E{T_n((0,0), v)}}, \;\text{for }v \in \vee.
\end{equation}
One can prove the finiteness of the shape function by leveraging the superadditivity of last passage times and Kingman's subadditivity theorem, under sufficient moment assumptions. Universal properties of $F$ were established in \cite{Mart04}. In particular, it was shown that shape function is concave and continuous over $\vee$. This is the first property of our models that will be relevant to our discussion.

\begin{property}
	\label{A1}
	The shape function $F$ as defined in \eqref{def: shape} is continuous, concave and sublinear over $\vee$. Furthermore, the supremum can be realized as a limit. For $v \in \vee$, 
	\[
		\lim_{n \to \infty} {T_n\br{(0,0), v}} = F(v),
	\]
	uniformly on compact sets, almost surely.
\end{property}

Deriving the explicit form of the shape function requires a much finer analysis. Variational formulas that characterize the limiting free energies and limit shape for a general class of polymer models were established in \cite{GRS16}. The shape function $F$ is explicitly known for a select few such models, such as exponential and geometric LPP (see \cite{Ro81}).

Large deviations of the limit law in Property \ref{A1} will be central to our arguments. Under sufficient moment conditions, it can be shown that the limit law satisfies at least two large deviation principles, one being at the speed of $n^{-1}$, which is referred to as the upper tail deviations. The existence and convexity of the upper tail large deviation rate function $J$ follows from the underlying superadditivity in the model. We shall require that $J$ also satisfies some basic regularity properties. 

\begin{property}
	\label{A2}
	For $\abs{\gamma} \leq 1$, there exists function $J:[-1,1] \times \bR \to [0, \infty]$ such that
	\[
		\lim_{n \to \infty} \frac{1}{n} \log \P{T_n \br{(0,0), (\gamma , 1)} \geq x} = -J(\gamma, x).
	\] 
	Furthermore, the function $J$ can be computed as
	\begin{equation}
		-J(\gamma, x) = \sup_n \frac{1}{n} \log \P{T_n \br{(0,0), ({\gamma }, 1)} \geq x},
	\end{equation}
	and $J$ satisfies the following properties:
	\begin{enumerate}[(i)]
		\item The function $J$ is jointly convex and continuous. 
		\item The function $J_\gamma = J(\gamma, \cdot)$ is zero for $x \leq F(\gamma, 1)$ and strictly increasing for $x \geq F(\gamma, 1)$.
	\end{enumerate}
\end{property}
Explicit form of $J$ is known for geometric LPP, as established in \cite{Jo00}. The rate function for exponential LPP can be established using the same arguments or by viewing exponential weights as a limit of geometric weights. In both cases, the rate function satisfies Property \ref{A2} and this observation is true for a wider class of LPP models, even when the explicit form of the rate function is not known (detailed discussion in Section \ref{sec: properties}).

The lower tail of $T$ decays at a much faster rate, and we shall refer to these events as lower tail large deviations. The lower tail large deviations are more complicated to study as one can no longer rely on the superadditivity of the model. However, for our arguments we only require some quantitative decay for the probability of the lower tail large deviation event.
\begin{property}
	\label{A3}
	For $\abs{\gamma} < 1$,
	\[
		\P{T_n((0,0), (\gamma , 1)) \leq F(\gamma, 1) - x} \leq \exp \bc{-\omega(n)},
	\]
	where $\omega(n)$ is used to denote a function which grows faster than any linear function of $n$.
\end{property}
For exactly solvable LPP models, the above probability is known to decay exponentially in $n^2$. Techniques to derive the rate function at this speed are discussed in \cite{Jo00}. Rate functions have also been studied for the closely related model of longest increasing subsequences in random permutation in \cite{DZ99} which prove the lower tail rate function while the upper tail rate function was established in \cite{seppalainen1998large}. 

Note that all the above properties extend to last passage times across arbitrary endpoints by the spatial stationarity of LPP models. We shall show in Section \ref{sec: properties}, that all the aforementioned properties hold when the i.i.d.\,random weights follow a distribution $\mu$ supported on the non-negative reals with a well-defined moment generating function in a neighborhood of zero, i.e.,
\begin{equation} \label{ass: mom}
	\exists t >0 \text{ such that } \E[\mu]{e^{t x}} < \infty.
\end{equation}
We shall additionally require that the support of $\mu$ is unbounded. This is required for the continuity claim in Property \ref{A2}(i), which is crucial for the lower semi-continuity of the metric rate function. 

By Property \ref{A1}, the rescaled passage times almost surely converge uniformly on compact sets to the function
\begin{equation}
	d(p,q) :=	F(q-p) \;\; \mathrm{for} \; (p,q) \in \bR^4_<\:.
\end{equation}
Since, we will be treating $T_n$ as random elements of $\cE$, a more natural mode of convergence for us will be the hypo-convergence of upper semi-continuous functions. We note that uniform convergence on compact sets is stronger than hypo-convergence, hence we have almost sure hypo-convergence of $T_n$ to $d$ in light of Property \ref{A1}. The goal of this article is to establish a large deviation principle for this almost sure convergence.

\begin{theorem}
	\label{thm: ldp}
	There exists a lower semi-continuous function $I : \cE \to [0, \infty]$, such that for every Borel measurable set $\cA \subseteq \cE$, as $n \to \infty$ we have
	\[
		\exp\br{(o(1)- \inf_{A^\circ} I) n} \leq \P{T_n \in \cA} \leq \exp\br{(o(1) -\inf_{\bar{\cA}} I) n}.
	\]
	Furthermore, $I$ is a good rate function i.e., $I^{-1}([0,a])$ is compact for all $a < \infty$. We also show that $I^{-1}(0) =d$ and $I$ is strictly increasing on $I^{-1}([0, \infty))$.
\end{theorem}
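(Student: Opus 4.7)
The plan is to follow and adapt the framework of \cite{DDV24}, which derives the directed-landscape LDP from a point-to-point rate function, a superadditive structure, and exponential tightness --- all of which are encoded here in Properties \ref{A1}--\ref{A3}. The natural candidate for the rate function is a ``discrete integral'' of $J$: for $e \in \cE$,
\[
I(e) = \sup \sum_{i=1}^k t_i \, J\!\left(\gamma_i, \frac{e(p_i, q_i)}{t_i}\right),
\]
where the supremum runs over finite chains $p_1 \leq q_1 \leq p_2 \leq \cdots \leq q_k$ in the $\vee$-order with $q_i - p_i = (\gamma_i t_i, t_i)$ and $t_i > 0$. Writing $I$ as a supremum of continuous functions of $e$ (via Property \ref{A2}(i)) should yield lower semi-continuity in the hypo-topology, while superadditivity of $t \mapsto t \, J(\gamma, x/t)$ makes $I$ consistent with the reverse triangle inequality \eqref{eq: triangle}, and Property \ref{A2}(ii) gives $I(d) = 0$.

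For the upper bound on a closed set $\cA$, I would use a projective-limit argument. Property \ref{A2}, in its $\sup_n$ form, gives an exact single-pair bound $\P{T_n(p,q) \geq x} \leq \exp\br{-n t J(\gamma, x/t)}$; independence of the weights over disjoint ``rectangles'' lets these bounds multiply along chains whose links are spatially separated, producing finite-dimensional upper bounds in terms of $I$. Exponential tightness in the hypo-topology should come from combining Property \ref{A2} (to control upper tails of $T_n$ uniformly on compacta) with Property \ref{A3} (to rule out collapse on interior pairs). A standard projective-limit LDP argument, followed by optimization over chains, then delivers the upper bound with rate $\inf_{\bar{\cA}} I$.

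The lower bound is the main obstacle. Given $e \in \cE$ with $I(e) < \infty$ and an open neighborhood $\cO$ of $e$ in the hypo-topology, one would select a finite chain $(p_i, q_i)$ that (a) nearly saturates the supremum defining $I(e)$, and (b) is fine enough that the event $T_n(p_i, q_i) \approx e(p_i, q_i)$ on each link forces $T_n \in \cO$. Making the links' rectangles disjoint, this joint event factorizes, and Property \ref{A2}, used via its $\sup_n$ formula together with the continuity assertion in (i), should yield matching lower bounds $\exp\br{-n t_i J(\gamma_i, \cdot) + o(n)}$ on each link; continuity of $J$ over the full strip $[-1,1]\times \bR$, which is where the unboundedness of the support of $\mu$ is used, is crucial so that small perturbations of the $e$-values do not blow the rate up. The subtle step is (b): patching the link-by-link approximations into a global hypo-approximation of $e$ requires using \eqref{eq: triangle} together with upper-tail control of $T_n$ on the ``gap'' segments, so that the hypo-graph of $T_n$ does not overshoot $e$ elsewhere.

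With both bounds in place, goodness of $I$ follows from exponential tightness via the standard identification; $I^{-1}(0) = \{d\}$ follows from Property \ref{A2}(ii) applied to the zero set of $J$ combined with \eqref{eq: triangle} (any $e$ strictly exceeding $d$ at some pair contributes a strictly positive term, and any $e$ dropping below $d$ on some link eventually violates the reverse triangle inequality); strict monotonicity of $I$ on $I^{-1}([0,\infty))$ follows from the strict monotonicity of $J(\gamma, \cdot)$ above $F(\gamma, 1)$, since raising $e$ strictly above $d$ on any link strictly increases the corresponding term in $I$.
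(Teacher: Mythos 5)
Your candidate rate function is defined as a supremum over finite \emph{chains} $p_1 \leq q_1 \leq p_2 \leq \cdots \leq q_k$, and this is in general strictly smaller than the true rate, which makes the lower bound fail. The problem is that a chain cannot account for perturbations of $d$ that are spatially separated but temporally overlapping. Consider, for example, the planted-network metric $e$ obtained from the two vertical paths $\gamma_1(t)=-10$ and $\gamma_2(t)=+10$ over $t\in[0,1]$, each planted with passage time $F(0,1)+1$. The passage times in the two spatial bands are independent, so $\P{T_n\approx e}$ decays like $\exp(-2n\,J(0,F(0,1)+1))$; yet neither $(-10,1)\leq(10,0)$ nor $(10,1)\leq(-10,0)$ holds in the $\vee$-order, so no chain can contain both pairs, and your $I(e)$ equals only $J(0,F(0,1)+1)$. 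The lower bound $\P{T_n\in B_\delta(e)}\geq\exp(-nI(e)+o(n))$ is then false. The paper avoids this by defining the preliminary rate $\Theta(e)$ as a supremum over countable collections of pairwise \emph{disjoint} pairs $u_i\triangledown u_j$ (disjointness meaning no common intermediate point, which is exactly what yields independence of $T_n(u_i)$), and the final $I(e)$ as a supremum over \emph{networks} of disjoint paths; the equivalence on $\cD$ is Lemma \ref{lemma: equiv}, and replacing disjoint points by disjoint geodesics via Lemma \ref{lemma: gdf} is a structural step your sketch does not have.

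A second gap: you do not set $I(e)=\infty$ when $e$ fails $d$-dominance or the metric composition law \eqref{eq: metcomp}. For instance $e=d/2$ lies in $\cE$ (the reverse triangle inequality for $d/2$ is exactly superadditivity of $F$), and under your definition $I(d/2)=0$ because $J(\gamma,\cdot)$ vanishes below $F(\gamma,1)$ by Property \ref{A2}(ii); yet by Property \ref{A3} the event $T_n\approx d/2$ decays super-exponentially, so the lower bound fails again, and your assertion that ``any $e$ dropping below $d$ on some link eventually violates the reverse triangle inequality'' is false. Likewise, metrics dominating $d$ that violate \eqref{eq: metcomp} can never be approached by $T_n$ (which always satisfies the composition law), so their rate must be $+\infty$ even though your chain sum assigns them a finite value. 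The paper handles all of this by restricting finite rate to $\cD=\bigcup_m\cD_m$ (Definition \ref{def: D}), proving $\cD_m$ compact (Lemma \ref{lemma: Dm cpt}), and showing $\P{\bd(T_n,\cD_m)\geq\delta}$ is exponentially negligible (Lemma \ref{lemma: Dm bnd}); these steps are essential to the upper bound and have no counterpart in your proposal.
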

By Property \ref{A3}, the lower tail deviation events will have infinite rate at this speed while upper tail large deviation will be captured by this LDP. Hence, we shall refer to this LDP as the upper tail LDP. We expect another rate function at the speed of $n^{-2}$ corresponding to lower tail events. Two separate rate functions at different speeds were proved recently for the closely related first passage percolation (FPP) model. An upper tail large deviation principle at the metric level for FPP was established in \cite{Ver24a} at the speed of $n^{-2}$ (at a speed of $n^{-d}$ in $\bZ^d$). This was soon followed by a lower tail large deviation principle in \cite{Ver24b} at the speed of $n^{-1}$. Note that upper/lower tail events in FPP are related to lower/upper tail events in LPP, hence the swap in speeds. The latter result was proven for bounded random weights by interpreting passage times as random bounded functions and derives an identical rate function as ours. Just like how our theory cannot be naturally extended to bounded variables, the LDP in \cite{Ver24b} cannot be extended to the unbounded case. Thus, our theories complement each other and completely answers questions regarding upper tail large deviations at the metric level.

Using the metric level large deviation principle, we shall study the large deviations of the geodesic between a given set of endpoints. For the sake of simplicity, we shall assume for this part that $\mu$ defines continuous random variables. This ensures that the last passage geodesics are almost surely unique between any two fixed endpoints. Let $u =(x,s;y,t) \in \oR$ and $\gamma_n$ denote the a.s.\,unique geodesic of $T_n$ with endpoints $u$. We shall treat $\gamma_n$ as elements of the space of continuous functions $C([s,t])$ with the topology of uniform convergence. 
\begin{theorem} \label{thm: geodesics}
	For every Borel measurable set $A \subset C([s,t])$ we have
	\[
		\exp \br{(o(1) - \inf_{A^\circ} \cJ_u) n} \leq \P{\gamma_{n} \in A} \leq \exp \br{(o(1) - \inf_{\bar{A}} \cJ_u) n}.
	\]
	Here $\cJ_u : C([s,t]) \to [0, \infty]$ is a good rate function satisfying
	\begin{align} \label{def: Ju}
		\cJ_u(f) :=\begin{cases}
			\inf \bc{I(e) : e \in \cD_u(f)}, \;\;\;\;\text{when}\;\;\;\;\; \cD_u(f) \neq \emptyset,\\
			\infty, \;\;\;\;\;\mathrm{otheriwse.}
		\end{cases}
	\end{align}
	where $\cD_u(f) \subset \cE$ is the set of metrics such that $f$ is a geodesic with endpoints $u$. 
\end{theorem}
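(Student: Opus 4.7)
The plan is to transfer the metric-level LDP of Theorem \ref{thm: ldp} to $C([s,t])$ via a contraction-style argument along the multi-valued geodesic map $\Gamma_u : \cE \to 2^{C([s,t])}$, where $\Gamma_u(e)$ is the set of $u$-geodesics of the metric $e$. By \eqref{def: Ju}, $\cJ_u(f) = \inf\{I(e) : f \in \Gamma_u(e)\}$, so $\cJ_u$ is the canonical pullback of $I$ along $\Gamma_u$. Since $\mu$ is continuous, $\Gamma_u(T_n) = \{\gamma_n\}$ almost surely. The workhorse is a \emph{stability lemma}: if $e_n \to e$ in $\cE$ and $f_n \in \Gamma_u(e_n)$, then $\{f_n\}$ is precompact in $C([s,t])$ and every subsequential uniform limit lies in $\Gamma_u(e)$. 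Precompactness follows from the Lipschitz-type bound $|f(t_1)-f(t_2)| \leq |t_1-t_2|$ forced on any $u$-path by the cone geometry of $\vee$, together with Arzel\`a--Ascoli. The identification of the limit geodesic uses upper semi-continuity preserved along hypo-convergence together with the generalized reverse triangle inequality \eqref{eq: triangle}.

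For the \textbf{upper bound}, given a closed $F \subseteq C([s,t])$ set $B_F := \{e \in \cE : \Gamma_u(e) \cap F \neq \emptyset\}$. Then $\{\gamma_n \in F\} \subseteq \{T_n \in B_F\}$ holds deterministically. The stability lemma shows $B_F$ is closed in $\cE$, and applying Theorem \ref{thm: ldp} yields $\P{\gamma_n \in F} \leq \exp(n(o(1) - \inf_{B_F} I))$. Unpacking $\inf_{B_F} I = \inf_{f \in F} \inf_{e \in \cD_u(f)} I(e) = \inf_F \cJ_u$ gives the claimed upper bound.

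For the \textbf{lower bound}, fix an open $G \subseteq C([s,t])$, pick $f \in G$ with $\cJ_u(f) < \infty$, and choose $\varepsilon > 0$ and $e \in \cD_u(f)$ with $I(e) \leq \cJ_u(f) + \varepsilon$. The critical sub-step is to construct a perturbation $\tilde e \in \cE$ such that (i) $f$ is the \emph{unique} $u$-geodesic of $\tilde e$, (ii) $I(\tilde e) \leq I(e) + \varepsilon$, and (iii) $\tilde e$ is arbitrarily close to $e$ in the hypo-topology. Given such $\tilde e$, the stability lemma upgrades uniqueness of the geodesic at $\tilde e$ into uniform proximity of geodesics on a hypo-open neighborhood $U \ni \tilde e$: shrinking $U$ one can guarantee $\Gamma_u(\tilde e') \subseteq G$ for all $\tilde e' \in U$. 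Combining with the lower bound in Theorem \ref{thm: ldp} yields
\[
	\P{\gamma_n \in G} \geq \P{T_n \in U} \geq \exp\br{n(o(1) - I(\tilde e))} \geq \exp\br{n(o(1) - \cJ_u(f) - 2\varepsilon)},
\]
and optimizing over $f \in G$ and sending $\varepsilon \to 0$ closes the argument. Goodness of $\cJ_u$ follows by the same route: for lower semi-continuity, if $f_n \to f$ and $\cJ_u(f_n) \leq L$, pick $e_n \in \cD_u(f_n)$ with $I(e_n) \leq L + 1/n$; goodness of $I$ extracts a hypo-limit $e$ with $I(e) \leq L$, and the stability lemma gives $f \in \Gamma_u(e)$, whence $\cJ_u(f) \leq L$. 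Compactness of sublevel sets of $\cJ_u$ follows analogously by combining compactness of sublevel sets of $I$ with the continuous selection of a limiting geodesic.

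The \textbf{main obstacle} is the unique-geodesic perturbation $\tilde e$ in the lower bound step. Strict uniqueness is essential: if $\Gamma_u(\tilde e)$ contained a second $u$-geodesic, nearby $T_n$ could realize geodesics tracking the competitor rather than $f$, breaking the inclusion $\Gamma_u(U) \subseteq G$. The natural construction is to add a thin positive reward supported on a narrow tube around $f$ so that every competing upright path loses a definite amount relative to $f$. Verifying admissibility of $\tilde e$ in $\cE$ (the reverse triangle inequality \eqref{eq: triangle} is not automatically preserved under pointwise additions) and controlling $I(\tilde e) - I(e) = O(\varepsilon)$ using lower semi-continuity of $I$ together with its strict monotonicity established in Theorem \ref{thm: ldp} will be the most technical piece of the proof.
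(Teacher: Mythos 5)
Your ``stability lemma'' --- that if $e_n \to e$ in the hypo-topology and $f_n$ are $e_n$-geodesics with $f_n \to f$ uniformly, then $f$ is an $e$-geodesic --- is false, and the paper explicitly warns against it: ``if finite rate metrics hypo-converge to another finite rate metric then their geodesics might not converge to a geodesic of the limiting metric.'' Hypo-convergence only gives $\limsup_n e_n(u) \leq e(u)$, not convergence at $u$, so from $\abs{f_n}_{e_n} = e_n(u)$ one can only conclude $\abs{f}_e \geq \limsup_n e_n(u)$, which need not reach $e(u)$; the failure mode occurs precisely when the only $e$-geodesics between the endpoints touch the boundary of $\vee$. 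The paper's Lemma~\ref{lemma: gdtech} instead establishes a dichotomy (either $f$ is an $e$-geodesic, or every $e$-geodesic lies in $\cG_b$) together with the weaker but sufficient ``Furthermore'' clause: one can always construct $e' \in \cD_u(f)$ with $e' \leq e$, by intersecting an $e$-complete network with the graph of $f$ and replanting (Proposition~\ref{prop: e-structure}). It is this clause, combined with monotonicity of $I$ (Corollary~\ref{cor: monotone}), that gives $\cJ_u(f) \leq I(e') \leq I(e)$ for the upper bound, lower semi-continuity, and goodness. In particular your claim that $B_F$ is closed is unjustified (and likely false); the paper never needs it, working with $\bar{S_A}$ directly and appealing to the Furthermore clause.

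For the lower bound you correctly flag the unique-geodesic perturbation as the technical crux, but the construction you sketch (``thin positive reward supported on a narrow tube around $f$'') is a different and unverified route; the paper's Lemma~\ref{lemma: uniqgd} avoids admissibility worries by staying inside the planted-network machinery --- it replants $f$ with the drifted weight $w_\kappa(z) = w(z) + \kappa(z-s)$, which automatically yields a member of $\cE$ and makes $f$ the unique geodesic, and then uses convexity of $\cR$ to push $I$ back below $\alpha$. You also miss a second needed step: when $f \in \cG_b$, even the unique-geodesic construction cannot be used directly, and the paper needs Lemma~\ref{lemma: perturb} to perturb $f$ to a nearby $f' \in \cG \setminus \cG_b$ with smaller rate (using convexity of $J$ and sublinearity of the shape function) before running the unique-geodesic argument. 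Without the $\cG_b$ dichotomy and the perturbation lemma, the lower-bound argument has a genuine hole for boundary-touching curves.
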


Large deviations of the geodesics at the middle of the journey were recently studied in \cite{ABGS25}. We shall provide a short proof of their result as an application of Theorem \ref{thm: geodesics}. 
\begin{corollary} \label{cor: corner}
	Let $u =(0,0;0,1) \in \oR$ and fix $t \in (0, 1/2)$. Let $\gamma_n(s)$ denote the $x$ coordinate of geodesic $\gamma_n$ at time $s$, 
	\begin{equation} \label{eq: corn-prob}
		\P{\gamma_n(1/2) \geq t} = \exp\br{(o(1) - J(2t, d(u)))n}.
	\end{equation}
\end{corollary}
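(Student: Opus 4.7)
I would apply Theorem~\ref{thm: geodesics} to the closed set $A = \{f \in C([0,1]) : f(1/2) \geq t\}$, whose interior in the uniform topology is $A^\circ = \{f : f(1/2) > t\}$. Since $\{\gamma_n(1/2) \geq t\} = \{\gamma_n \in A\}$, the corollary reduces to showing
\[
\inf_{f \in A} \cJ_u(f) \;=\; \inf_{f \in A^\circ} \cJ_u(f) \;=\; J(2t, d(u)).
\]
By the variational definition of $\cJ_u$, both infima equal the infimum of $I(e)$ over metrics $e \in \cE$ admitting a geodesic from $(0,0)$ to $(0,1)$ that passes through some $(x, 1/2)$ with $x \geq t$ (respectively $x > t$).

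\textbf{Lower bound.} For any such $e$, the generalized reverse triangle inequality \eqref{eq: triangle} becomes equality along the geodesic, so writing $a_1 := e((0,0),(x,1/2))$ and $a_2 := e((x,1/2)^+,(0,1))$ we have $a_1 + a_2 = e((0,0),(0,1))$. Property~\ref{A3} forces $e \geq d$ wherever $I(e) < \infty$, so $a_1 + a_2 \geq d(u)$. The LPP weights on the two triangular sub-regions separated by the sides of $\vee_{(x,1/2)}$ are independent, so via the variational formula for $I$ from the main text the rate decomposes additively; combined with the scaling identity $T_n((0,0),(x,1/2)) = \tfrac{1}{2} T_{n/2}((0,0),(2x,1))$, Property~\ref{A2}, and the reflection symmetry $J(\gamma,\cdot)=J(-\gamma,\cdot)$, this yields
\[
I(e) \;\geq\; \tfrac{1}{2}\, J(2x, 2a_1) \;+\; \tfrac{1}{2}\, J(2x, 2a_2).
\]
Convexity of $J(2x,\cdot)$ and its monotonicity (Property~\ref{A2}) together with $a_1 + a_2 \geq d(u)$ give the right-hand side $\geq J(2x, d(u))$ by Jensen. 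Finally, $\gamma \mapsto J(\gamma, d(u))$ is convex and symmetric with $J(0, d(u)) = 0$ (since $d(u) = F(0,1)$ sits at the boundary of the zero region of $J_0$), hence non-decreasing in $|\gamma|$; infimizing over $x \geq t$ produces $J(2t, d(u))$.

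\textbf{Matching upper bound.} For $\epsilon > 0$ I would construct $e^* \in \cE$ admitting a geodesic through $(t+\epsilon, 1/2)$ that saturates every inequality above with $a_1 = a_2 = d(u)/2$: paste two tilted shape profiles on the two triangular sub-regions, each realizing the upper-tail deviation $d(u)/2$ along direction $(\pm 2(t+\epsilon), 1)$. By additivity of $I$, $I(e^*) = J(2(t+\epsilon), d(u))$; letting $\epsilon \downarrow 0$ and invoking continuity of $J$ (Property~\ref{A2}(i)) gives $\inf_{A^\circ} \cJ_u \leq J(2t, d(u))$, matching the lower bound.

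\textbf{Main obstacle.} The essential technical input is the additive decomposition of $I(e)$ across the two triangular sub-regions separated by a single $\vee$-translate, along with the matching saturating construction of $e^*$. This additivity reflects the independence of LPP weights on disjoint lattice blocks and should follow from the segmentation-style variational formula for $I$ developed in the main text. Once that additive structure is granted, the remaining work is an elementary two-variable convex optimization of $J(2x, 2a_1) + J(2x, 2a_2)$ subject to $a_1 + a_2 \geq d(u)$ and $x \geq t$.
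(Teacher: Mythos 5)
Your proposal takes a genuinely different route from the paper. The paper reduces to the weight-function variational problem for $\cJ_u$ (Corollary~\ref{cor: gdappli}), observes by symmetry that the minimizing path and weight function can be taken symmetric about time $1/2$, and invokes Remark~\ref{rmk: sec9} (straight lines minimize $\cR$) together with convexity of $\cR$ in $w$ to conclude that the piecewise-linear path $\ell$ with uniform weight function $w_\ell$ is optimal with cost $J(2t, d(u))$. You instead decompose the rate directly along the two halves of the geodesic and apply Jensen and monotonicity in the angular variable. Both approaches ultimately rest on the joint convexity and symmetry of $J$, and yours is a reasonable alternative, but a few of your intermediate steps are imprecise and should be fixed.

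First, the claim that \emph{``Property~\ref{A3} forces $e \geq d$ wherever $I(e) < \infty$''} is mis-attributed: $e \geq d$ is item (i) of Definition~\ref{def: D}, and $I$ is set to $\infty$ outside $\cD$ by fiat. Second, the step \emph{``the LPP weights on the two triangular sub-regions are independent, so ... the rate decomposes additively''} is not a statement the paper proves; what you actually need is the chain $I(e) \geq I(e, \gamma) = I(e, \gamma|_{[0,1/2]}) + I(e, \gamma|_{(1/2,1]})$ from Lemma~\ref{lemma: breakgd}, followed by $I(e, \gamma|_{[0,1/2]}) \geq \Theta(e, u_1)$ from Lemma~\ref{lemma: ugd} (and similarly for the second half, which requires a small limiting argument since the domain is half-open). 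The ``scaling identity'' $T_n = \tfrac12 T_{n/2}$ you invoke is unnecessary: the definition $\Theta(e,u) = J_m\bigl(e(u)/(t-s)\bigr)(t-s)$ gives $\Theta(e, u_1) = \tfrac12 J(2x, 2a_1)$ directly. Third, your matching upper bound (``paste two tilted shape profiles'') is a sketch that needs Proposition~\ref{prop: e-structure} to produce a bona fide planted network metric $e^*$, and you must verify that $\ell$ is actually an $e^*$-geodesic (which amounts to checking the constraint in Corollary~\ref{cor: gdappli}, i.e.\ $w_\ell(b) - w_\ell(a) \geq d((\ell(a),a),(\ell(b),b))$, using concavity of $F$). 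Once these three points are tightened, your argument is a valid alternative to the paper's sketch, and arguably more self-contained in the lower-bound direction.
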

We note that our techniques only allow us to establish the upper bound for the case of $t=1/2$. The matching lower bound would require a constructive proof which is detailed in \cite{ABGS25}.

\subsection*{Organization of Paper} We discuss some necessary topological considerations in Section \ref{sec: pre}. This section is mainly an overview of the topology of hypo-convergence. We then discuss some intuition behind the rate function and describe a preliminary rate function in Section \ref{sec: prf}. The preliminary rate function allows us to identify the correct candidates for finite rate metrics. Section \ref{sec: frm} deals with the topological properties of the set of finite rate metrics and establishes that these metrics define geodesic spaces. This property is then used extensively to understand the finer structure of finite rate metrics and realize them as planted network metrics in Section \ref{sec: ntwrk}. We also state and prove the claimed properties of the actual rate function in this section. The next two sections are dedicated to proving the large deviation upper bound and lower bound. The upper bound can be proven with only the theory developed in \ref{sec: frm}. The only input that we need from Section \ref{sec: ntwrk} is the definition of the true rate function. The lower bound is not so simple and requires an understanding of the finite rate metrics as planted network metrics to prove the desired results. Section \ref{sec: geo} then builds upon Theorem \ref{thm: ldp} to prove a large deviation principle for geodesics in models where the geodesic is almost surely unique. Finally, we see the proof of the key properties (Property \ref{A1}, \ref{A2}, \ref{A3}) to complete the picture for last passage percolation on lattice models. We discuss implications for directed polymers and Poisson LPP in Section \ref{sec: pois}. 

\section{Preliminaries} \label{sec: pre}

\subsection*{Hypo-convergence.}
Let $X$ be a metrizable topological space and $UC(X)$ denote the collection of all real-valued upper semi-continuous functions. For an upper semi-continuous function $f: X \to \bR$, we define the hypograph $hypo(f)$ as
\[hypo(f) := \bc{(x,t) : x \in X, t \leq f(x)}.\]
Hypographs of upper semi-continuous functions are closed subsets of the product space $X \times \bR$. Thus, $hypo(f)$ provides an identification of $f$ as an element of the hyperspace of $X \times \bR$, which is the collection of all closed sets in $X \times \bR$. One can talk about convergence of functions in terms of the Painlev\'e-Kuratowski convergence of their hypographs. Given closed subsets $A_1, A_2, \cdots$ of a topological space $Y$, the sequence $(A_n)$ is called Painlev\'e-Kuratowski convergent to $A$ provided that $\li A_n =\ls A_n =A$, where
\begin{align*}
	\li A_n &:= \bc{a \in Y : \exists (a_n) \in Y,  a_n \to a, a_n \in A_n \text{ for all but finitely many }n},\\
	\ls A_n &:= \bc{a \in Y : \exists (n_k) \subset \bN, a_k \in A_{n_k}, \text{such that }  a_k \to a}.
\end{align*}
We say that $(f_n) \in UC(X)$ is hypo-convergent to $f$, provided that their hypographs are Painlev\'e-Kuratowski convergent to $hypo(f)$. This is equivalent to the following two conditions:
\begin{enumerate}[H(I)]
	\item: Whenever $x_n \to x$, then $\limsup_{n}f_n(x_n) \leq f(x)$. \label{hypo1}
	\item: There exists a sequence $\bc{x_n}$ convergent to $x$ such that $\lim_n f_n(x_n) = f(x)$. \label{hypo2}
\end{enumerate}
Painlev\'e-Kuratowski convergence is equivalent to the Fell topology, thus induces a topology, which we denote by $\tau_h$. This topology is generated by the following subbase of open sets:
\begin{align*}
	\bc{\cB^{K,a}, \cB_{G,a} : K, G \subset X, K\text{ compact}, G\text{ open}, a \in \bR \cup \bc{\infty}},
\end{align*}
where for any $Q \subset X$ and $a \in \bR \cup \bc{\infty}$, 
\begin{align*}
	\cB^{Q,a} =\bc{f \in UC(X) : \sup_Q f(x) < a} && \mathrm{and}&& 
	\cB_{Q,a} =\bc{f \in UC(X) : \sup_Q f(x) > a}.
\end{align*}
As a consequence of this equivalence, we have that $\tau_h$ is Hausdorff iff $X$ is locally compact (see \cite[Theorem 2.76]{Att84}). More interestingly, we have that
\begin{theorem}[{\cite[Corollary 2.79]{Att84}, \cite[Corollary 4.3]{DSW83}}] \label{thm: AA}
	Let $X$ be a locally compact Hausdorff second countable space, then the topological space $(UC(X), \tau_h)$ is compact Hausdorff and second countable.
\end{theorem}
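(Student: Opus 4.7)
The plan is to transfer the problem to the classical theory of hyperspaces via the hypograph embedding $\Phi : UC(X) \to \mathcal{F}(X \times \bR)$ defined by $\Phi(f) = hypo(f)$, where $\mathcal{F}(Y)$ denotes the collection of closed subsets of $Y$ equipped with the Fell topology. Since $X$ is locally compact Hausdorff and second countable, so is $X \times \bR$. A classical theorem of Fell asserts that $(\mathcal{F}(Y), \tau_{\mathrm{Fell}})$ is compact Hausdorff whenever $Y$ is locally compact Hausdorff, and is additionally second countable when $Y$ is. Because Painlev\'e-Kuratowski convergence coincides with the Fell topology on closed subsets, $\tau_h$ is precisely the subspace topology on $\Phi(UC(X))$ inherited from $\mathcal{F}(X \times \bR)$; this identification can be checked directly on subbases, since $\sup_K f < a$ for compact $K$ matches the Fell condition $hypo(f) \cap (K \times [a, \infty)) = \emptyset$, and $\sup_G f > a$ for open $G$ matches $hypo(f) \cap (G \times (a, \infty)) \neq \emptyset$.

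Given this identification, it remains to show that $\Phi(UC(X))$ is closed in $\mathcal{F}(X \times \bR)$, since closed subspaces of compact Hausdorff second countable spaces retain all three properties. The image $\Phi(UC(X))$ consists of those closed $F \subset X \times \bR$ which are downward closed in the second coordinate and have each nonempty fiber equal to a closed half-line $(-\infty, f(x)]$. Downward closedness passes to a Painlev\'e-Kuratowski limit $F$ of hypographs: if $(x, t) \in F$ and $s < t$, there is a sequence $(x_n, t_n) \in F_n$ with $(x_n, t_n) \to (x, t)$, and for $n$ large $t_n \geq s$, so $(x_n, s) \in F_n$ by downward closedness of each $F_n$, whence $(x, s) \in F$. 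The fiber condition then follows because each fiber of $F$ is a closed, downward closed subset of $\bR$.

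The main obstacle I anticipate is ruling out fibers equal to all of $\bR$, corresponding to $f(x) = +\infty$, which lies outside the stated real-valued $UC(X)$. The standard resolution is to interpret $UC(X)$ as the space of extended-real-valued USC functions $f : X \to [-\infty, +\infty]$; under this convention the image $\Phi(UC(X))$ is exactly the collection of downward-closed elements of $\mathcal{F}(X \times \bR)$, and Fell's theorem applies directly. With that subtlety addressed, the Hausdorff property uses local compactness of $X$ (as already noted in the paper), and second countability is produced by combining a countable base for $X$ with rationals in $\bR$ to generate a countable Fell-subbase; both features restrict to the closed subspace $\Phi(UC(X))$.
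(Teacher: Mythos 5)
The paper does not prove Theorem \ref{thm: AA}; it cites \cite{Att84} and \cite{DSW83}, which establish it by exactly the hyperspace/Fell-topology route you take. Your proposal is essentially correct and matches that standard argument: embed $UC(X)$ into $\mathcal{F}(X\times\bR)$ via hypographs, invoke Fell's theorem (compactness unconditionally, Hausdorffness from local compactness, second countability from second countability), and show the image is Fell-closed by checking that downward-closedness survives Painlev\'e--Kuratowski limits. You are also right to flag that the statement only holds verbatim for extended-real-valued USC functions (otherwise constant functions $f_n\equiv n$ escape to $f\equiv+\infty$), which is indeed the convention in the cited references.

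Two small points worth tightening if you write this up. First, $K\times[a,\infty)$ is not compact, so $\{F: F\cap(K\times[a,\infty))=\emptyset\}$ is not literally a Fell subbasic open set; the fix is immediate since hypographs are downward-closed, so on the image this condition coincides with $F\cap(K\times\{a\})=\emptyset$, and $K\times\{a\}$ is compact. Second, the claim that the paper's subbase $\{\cB^{K,a},\cB_{G,a}\}$ generates the full subspace Fell topology (and not merely a coarser one) deserves a sentence: one reduces a general Fell-subbasic set $\{F:F\cap K=\emptyset\}$ to finitely many of the form $K'\times\{a\}$ by covering $K$ with product neighborhoods and again exploiting downward-closedness of hypographs. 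Neither point affects the soundness of the overall plan.
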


An immediate consequence of the above statement along with Urysohn's metrization theorem, is that $(UC(X), \tau_h)$ is metrizable. Thus, we can choose a metric $\bd$ on $UC(X)$ which induces the topology $\tau_h$.

Convergence of upper semi-continuous functions can also be described in terms of the convergence of their hypographs in the Hausdorff metric. The Hausdorff distance $d_h$ between two closed sets $C$ and $K$ of $X \times \bR$ is defined as 
\[d_h(C,K) := \inf \bc{\lambda > 0 : K \subset B_\lambda(C), C \subset B_\lambda(K) } ,\]
where $B_\lambda(K)$ is the union of all $\lambda$ balls with center in $K$. One can now define the Hausdorff distance between upper semi-continuous functions as the Hausdorff distance of their hypographs. For $f \in UC(X), \lambda >0$ we define $f^\lambda: X \to \bR$ by
\[f^\lambda(x) := \sup \bc{\alpha : (x, \alpha) \in B_\lambda(hypo(f))}. \] 
Then, $f^\lambda \in UC(X)$ and $d_h(f^\lambda, f)=\lambda$. Moreover, $d_h(f,g) \leq \lambda$ iff $f \leq g^\lambda$ and $g \leq f^\lambda$. This observation yields the following equivalence:

\begin{remark} \label{rmk: hc}
	If $X$ is a compact metric space then one can show that \ref{hypo1} is equivalent to $f_n \leq f^\lambda$ for any $\lambda >0$ for $n$ large enough. Similarly, \ref{hypo2} is equivalent to $f \leq f^\lambda_n$ for any $\lambda >0$ for $n$ large enough. Thus, if $X$ is a compact metric space then the hypo-convergence of $(f_n) \in UC(X)$ is also equivalent to the convergence of the hypographs in the Hausdorff metric. 	
\end{remark}

We refer the readers to \cite{BEER19821} for more details on the equivalence. In our setting, the space $X$ shall be $\oR$. We endow $\oR$ with the norm $d^1((x,s;y,t)) := \max(\abs{x} + \abs{s}, \abs{y} + \abs{t})$ as it behaves well with the structure of last passage percolation.

\subsection*{Notation}
For points $p,q \in \bR^2$, we say $p \leq q$ if $(p,q) \in \bR^4_<$. We shall drop the equality sign to imply that $p \leq q$ and $p \neq q$. We shall also make use of some asymptotic notations. For functions $f, g: \bR \to [0, \infty]$, we say $f = O(g)$ if there exists a constant $c$ such that $f \geq cg$. Similarly, we say $f =\omega(g)$ to imply that for all $c >0$, $f(x) \geq cg(x)$ for $x$ sufficiently large.

\section{Preliminary Rate Function} \label{sec: prf}
In this section, we introduce a preliminary rate function $\Theta$. It coincides with the actual rate function over the subset of $\cE$ containing functions with finite rate and helps develop the machinery required to define the true rate function. We begin by discussing the heuristics behind the choice of $\Theta$.


Given $e \geq d,$ we want to understand the probability that $T_n$ is close to $e$. Owing to the similarity in the problems, our strategy to develop the rate function will be similar to that adopted in \cite{DDV24}. Let $I$ be an interval, we call a continuous curve $\gamma: I \to \bR$ a path in $\oR$ if $(\gamma(t_1),t_1) \leq (\gamma(t_2),t_2)\; \forall \:t_1 \leq t_2 \in I$. We now study the probability that the restriction of $T_n$ to this path is close to $e$. Let $u_{r,r'} := (\gamma(r), r; \gamma(r'), r')$ for $r< r'$, and consider the event 
\begin{equation} \label{heuristic}
	\cA := \bc{\abs{T_n(u_{r,r'}) -e(u_{r,r'})} \leq \delta}, 
\end{equation}
for a fixed $\delta >0$. By Property \ref{A2}, the probability of this event is approximately
\[\exp\br{-n J_m \br{\frac{e(u_{r,r'}) - \delta}{r' -r}}(r'-r)},\]
where $[x]_+ = \max (x,0),\, m = \frac{\gamma(r') -\gamma(r)}{r'-r}$. For $u=(x,s;y,t) \in \oR$, we define
\begin{align*}
	\Theta(e, u) := J_m \br{\frac{e(u)}{t -s}}(t-s), && \mathrm{for} \; m = \frac{y-x}{t-s}.
\end{align*}
By temporal independence of last passage times, the events in \eqref{heuristic} are independent across disjoint time intervals. Hence, given a collection $\{[s_i, t_i]\}_{i=1}^k$ of disjoint closed intervals in $I$, we have that the probability of event $\cA$ is  bounded above by
\[
	\exp\br{-n \sum_{i=1}^k \Theta(e - \delta, u_{s_{i}, t_i})}. 
\]  
The above equation holds for all countable collection of disjoint closed intervals of $I$; denote the set of all such collections by $\cC$.  Optimizing over all such collections and letting $\delta \to 0$ at an appropriate rate, the probability for the metrics to be close restricted to the path $\gamma$ can be upper bounded by
\begin{align} \label{eq: gdrf}
	\exp \br{-n \Theta(e, \gamma)}, && \mathrm{with}\; \Theta(e, \gamma) := \sup_{\cP \in \cC} \sum_{[s_{i}, t_i] \in \cP} \Theta(e, u_{s_{i}, t_i}). 
\end{align}

This comparison restricts us to a singular path, giving us a weak upper bound for the actual probability. Notice that if $u_i =(p_i;q_i) \in \oR, i =1,2$ are such that $\nexists\: r \in \bR^2$ such that $p_i \leq r \leq q_i$ for $i=1,2$ simultaneously, then the passage times $T_n(u_i)$ are independent. If a pair of points $u_i, i=1,2$ satisfy this condition then we shall call them disjoint, and we denote it by $u_1 \triangledown u_2$. We define the preliminary rate function $\Theta$ as taking the supremum over countable collection of such points in $\oR$,
\begin{align}
	\Theta(e) := \sup_{\substack{(u_i), s_i \neq t_i\\ u_i \triangledown  u_j, i \neq j}} \sum_{i=1}^k \Theta(e, u_i).
\end{align}
The next few sections are devoted to understanding how this functions assigns values to metrics and to developing the true rate function for our LDP.

\section{Finite Rate Metrics} \label{sec: frm}
This section is devoted to establishing the class of functions to which we shall assign finite rate. These functions have properties closely resembling those of directed metrics (see \cite{DV21} for definition); hence we refer to them as finite rate metrics. Note that strictly speaking, these functions are not directed metrics as the distance of a point to itself can be non-zero. 
 
\begin{definition} \label{def: D}
	For $m>0$, let $\cD_m \subseteq \cE$ be the set of functions $e$ satisfying the following conditions:
	\begin{enumerate}[(i)]
		\item $e$ is $d-$dominant: $e\geq d$.
		\item $e$ is $d-$close: $\Theta(e) \leq m$.
		\item $e$ satisfies a generalized metric composition law: for any points $(x,s) < (y,t)$ and $s \leq r < t$, 
		\begin{equation} \label{eq: metcomp}
			e(x,s;y,t) = \sup_{(x,s) \leq (z,r) < (y,t)} e(x,s; z,r) + e((z,r)^+;y,t),
		\end{equation}
		where $e(p^+,q)$ is as defined in \eqref{eq: triangle}.
	\end{enumerate}
\end{definition}
We set $\cD := \cup_{m \geq 0} \cD_m$, which shall contain all our finite rate metrics. Metrics in $\cE$ may allow exceptional points $p \in \bR$ such that $e(p,p) > 0$. We shall call such a point an atom. These exceptional points will be a deterrent in many of our arguments, however we have some workarounds for them. The following simple lemma will be particularly useful in this regard.
\begin{lemma} \label{lemma: ctble}
	Let $e \in \cE$ be such that $\Theta(e) < \infty$, then the set of atoms in $e$ is countable.
\end{lemma}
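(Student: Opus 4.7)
The plan is to show that for every $\alpha > 0$ the super-level set $A_\alpha := \bc{p \in \bR^2 : e(p,p) \geq \alpha}$ is in fact \emph{finite}; since the atoms equal $\bigcup_{k \geq 1} A_{1/k}$, this delivers countability. The whole argument reduces a single atom to a positive, $\delta$-independent contribution to $\Theta(e)$, and then adds many such contributions across distinct atoms.

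Fix an atom $p = (x,s)$ and a small $\delta > 0$, and work with the vertical segment $u_\delta := (p; p + (0,\delta)) \in \oR$, whose direction parameter is $m = 0$. Applying the reverse triangle inequality \eqref{eq: triangle} with $r = p$ and $q = p + (0,\delta)$ gives
\[
e(u_\delta) \;\geq\; e(p,p) + e(p^+; p + (0,\delta)) \;\geq\; e(p,p) \;\geq\; \alpha ,
\]
so that
\[
\Theta(e, u_\delta) \;=\; J_0\!\br{e(u_\delta)/\delta}\,\delta \;\geq\; \alpha\cdot \frac{J_0(\alpha/\delta)}{\alpha/\delta}.
\]
Property \ref{A2}(ii) supplies some $y_0 > F(0,1)$ with $J_0(y_0) > 0$, and convexity of $J_0$ pinned at $(F(0,1),0)$ then forces $J_0(y)/y \geq c > 0$ for all $y$ sufficiently large. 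Consequently there exists $\delta_0 = \delta_0(\alpha)$ such that $\Theta(e, u_\delta) \geq c\alpha$ whenever $\delta < \delta_0$.

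To assemble a disjoint family, note that if $r$ satisfies $p_i \leq r \leq p_i + (0,\delta_i)$, then the two cone constraints force $r$ to lie within a ball of radius $\delta_i$ around $p_i$. Hence for distinct atoms $p_1,\ldots,p_N \in A_\alpha$, choosing each $\delta_i$ smaller than both $\delta_0(\alpha)$ and $\tfrac{1}{3}\min_{j \neq i}\|p_i - p_j\|$ makes the middle sets of $u_{\delta_i}$ pairwise disjoint, which is exactly the condition $u_{\delta_i} \triangledown u_{\delta_j}$ for every $i \neq j$. Therefore
\[
\Theta(e) \;\geq\; \sum_{i=1}^N \Theta(e, u_{\delta_i}) \;\geq\; Nc\alpha .
\]
If $A_\alpha$ were infinite, one could take $N$ arbitrarily large, contradicting $\Theta(e) < \infty$; thus every $A_\alpha$ is finite and the set of atoms is countable.

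The main obstacle is the uniform-in-$\delta$ lower bound $\Theta(e, u_\delta) \geq c\alpha$. The naive strict increase of $J_0$ does not suffice, because one is multiplying by a vanishing $\delta$; one genuinely needs $J_0(y)/y$ bounded away from zero at infinity, which is the price the rate function must charge for squeezing the gap $e(p,p)=\alpha$ into a time window of length $\delta$. Convexity anchored at the zero-set of $J_0$ is what makes this estimate essentially free; everything else (middle-set localization, disjointness, summation) is routine.
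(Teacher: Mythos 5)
Your proof is correct and takes essentially the same approach as the paper's, which only sketches the argument (``each atom can make a positive contribution\dots one can now easily show that $\Theta(e)$ cannot be bounded by using elementary arguments''). You supply the quantitative step the paper glosses over: the uniform-in-$\delta$ lower bound $\Theta(e, u_\delta) \geq c\alpha$, obtained from the convexity of $J_0$ anchored at its zero set via Property~\ref{A2}, which is exactly what makes the level-set counting over disjoint points go through.
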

\begin{proof}
	Let $p$ be an atom, $u =(p,p)$. Since $e \geq 0$, by the triangle inequality \eqref{eq: triangle},
	\[e(p, p +(0,\epsilon)) \geq e(u),\]
	for all $\epsilon >0$. Since $d(p,p) =0$, we can choose $\epsilon$ small enough such that
	\[\Theta(e, (p, p +(0,\epsilon))) > 0.\]
	Thus, each atom can make a positive contribution to $\Theta(e)$. Assume that there are uncountable many atoms, one can now easily show that $\Theta(e)$ cannot be bounded by using elementary arguments.
\end{proof}
\begin{remark}
	We could have excluded metrics with have atoms from our discussion. However, elementary calculations show that such metrics will have finite rate when the point to point rate functions exhibit linear growth. This happens when the Cramer's rate function corresponding to the random weights is bounded by a linear function. Thus, these ill-behaved metrics get assigned finite rates even in the popularly studied exponential LPP and excluding them would unnecessarily limit the scope of our discussion. 
\end{remark} 

We can interpret $e(p^+,q)$ as the supremum of $\limsup_n e(p_n, q)$ over sequences $(p_n) \subset \vee_p \setminus \bc{p}$, converging to $p$. In fact, it suffices to look at the limit along sequences along the boundary of $\vee_p$. We formalize our assertion in the following lemma and leave the proof to the reader as an exercise.
\begin{lemma} \label{lemma: atoms}
	Let $e \in \cE$, $ u = (p;q) \in \oR$, and let 
	\[
		p_n^1 := p + (1,1)n^{-1} \;\;\text{and}\;\; p_n^2 := p+(-1,1)n^{-1}.
	\]
	Then for $p \neq q$, 
	 \[e(p^+; q) = \max \bc{\sup_n e(p_n^*; q) \;\vert\; * \in \bc{1, 2}}\] 
	 and it is zero otherwise.
\end{lemma}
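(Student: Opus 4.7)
When $p = q$ the claim $e(p^+; q) = 0$ is immediate from \eqref{def: plus}, so we may assume $p \neq q$ and set $A_j := \sup_n e(p_n^j; q)$. The bound $\max(A_1, A_2) \leq e(p^+; q)$ is trivial since each $p_n^j$ lies in $\vee_p \setminus \{p\}$, so we focus on the reverse direction: it suffices to show that every $r' \in \vee_p \setminus \{p\}$ with $(r'; q) \in \oR$ satisfies $e(r'; q) \leq \max(A_1, A_2)$, since taking $\sup_{r'}$ then gives the desired inequality.

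The core idea is to apply the reverse triangle inequality \eqref{eq: triangle} with a carefully chosen intermediate point $\tilde r$ squeezed between $p_n^j$ and $r'$ for a suitable $j \in \{1, 2\}$. If $(p_n^j; \tilde r), (\tilde r; q), (p_n^j; q) \in \oR$ and $p_n^j \leq \tilde r < r'$, then \eqref{eq: triangle} and the nonnegativity of $e$ give
\[
e(p_n^j; q) \;\geq\; e(p_n^j; \tilde r) + e(\tilde r^+; q) \;\geq\; e(r'; q),
\]
since $r' \in \vee_{\tilde r} \setminus \{\tilde r\}$ places $e(r'; q)$ among the terms in the supremum defining $e(\tilde r^+; q)$. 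Passing to $\sup_n$ yields $A_j \geq e(r'; q)$, as required.

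To construct such a $\tilde r$, write $r' = p + v$ with $v \in \vee \setminus \{0\}$, and let $\delta^1 = (1, 1)$, $\delta^2 = (-1, 1)$ be the generators of the two boundary rays of $\vee$. Select $j = 2$ if $v$ lies strictly on the $\delta^2$ ray of $\vee$, and $j = 1$ otherwise (covering both the $\delta^1$ ray and the interior). Set $\tilde r_k := r' - \delta^j / k$. The identity $r' - \tilde r_k = \delta^j / k \in \vee \setminus \{0\}$ gives $\tilde r_k < r'$, while $q - \tilde r_k = (q - r') + \delta^j / k \in \vee$ (using closure of $\vee$ under addition) gives $(\tilde r_k; q) \in \oR$. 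A direct check of the defining inequalities of $\vee$ confirms $p_n^j \leq \tilde r_k$ and $(p_n^j; \tilde r_k) \in \oR$ for all sufficiently large $n$ and $k$.

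The only remaining obligation is verifying $(p_n^j; q) \in \oR$, which is handled by a short geometric case analysis: if $v$ lies strictly on one of the two boundary rays of $\vee$, then the condition $r' \leq q$ excludes $q$ from the opposite boundary ray of $\vee_p$, which is exactly what the chosen $j$ requires; if $v$ is interior, the same argument forces $q$ to be interior as well, and both choices of $j$ then work. The main hurdle of the proof is precisely this geometric bookkeeping of which boundary ray is admissible for each configuration of $r'$ and $q$; once laid out, all the checks reduce to elementary inequalities in the cone $\vee$, and the reverse triangle inequality does the rest.
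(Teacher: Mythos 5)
The paper leaves this lemma as an exercise, so there is no proof on record to compare against. Your argument is correct. The easy inequality $\max(A_1,A_2) \leq e(p^+;q)$ is immediate, and the substantive direction is correctly reduced to showing $e(r';q) \leq A_j$ for every admissible $r'$, with $j$ chosen by where $v = r'-p$ lies in the cone $\vee$. The use of \eqref{eq: triangle} with a point squeezed between $p_n^j$ and $r'$ is the right device, and the case analysis on the position of $v$ (and the induced position of $w = q-p$) is sound: if $v$ lies on one boundary ray, then $w-v\in\vee$ forces $w$ off the opposite ray, and if $v$ is interior then $w$ is interior, so $(p_n^j;q)\in\oR$ for the chosen $j$ and large $n$, as you assert.

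One remark on economy: the auxiliary sequence $\tilde r_k$ is not needed. Once $j$ and $n$ are fixed so that $p_n^j < r'$ (strictly) and $(p_n^j;q)\in\oR$, you may apply \eqref{eq: triangle} with intermediate point $r = p_n^j$ itself, yielding
\[
e(p_n^j;q) \geq e(p_n^j;p_n^j) + e(p_n^{j+};q) \geq e(p_n^{j+};q) \geq e(r';q),
\]
since $r' \in \vee_{p_n^j}\setminus\{p_n^j\}$ puts $e(r';q)$ in the supremum defining $e(p_n^{j+};q)$. This removes the extra parameter $k$ and the associated membership checks, while the rest of your geometric bookkeeping (choice of $j$, verification that $(p_n^j;q)\in\oR$) goes through unchanged.
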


We now collect some structural properties of $\cD$ and examine how hypo-convergence interacts with the structure of our finite rate metrics. Most importantly, we shall show that the sets $\cD_m$ are compact, and the finite rate metrics define geodesic spaces. We begin by showing that for $e \in\cD$, the supremum in \eqref{eq: metcomp} is always achieved. The arguments we present will be useful later, hence we make the statement in a greater generality.

\begin{lemma} \label{lemma: max}
	Let $e \in\cE$, $R_e \subseteq \bR$ be the maximal set such that there are no atoms in $\bR \times R_e$. Let $e$ satisfy the following properties
	\begin{enumerate} [(i)]
		\item $\Theta(e)$ is finite.
		\item For all $(p; q) = (x,s;y,t) \in \oR$ such that $p$ is not an atom, we have that 
		\begin{equation} \label{eq: max}
			e(x,s;y,t) = \sup_{(x,s) \leq (z,r) < (y,t)} e(x,s; z,r) + e((z,r)^+;y,t),
		\end{equation}	
		for $r \in [s, t) \cap R_e$.
	\end{enumerate} 
	Then $e$ satisfies Definition \ref{def: D}(iii) and the supremum is always achieved.
\end{lemma}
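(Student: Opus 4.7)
The plan is to extend the composition hypothesis in two stages: first across all $r \in [s, t)$ via density of $R_e$, and then across atom starting points via non-atom approximation, while simultaneously establishing that the supremum is attained.

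For the first stage, fix non-atom $(x, s)$ and $r \in R_e \cap [s, t)$, and write $F(z) := e(x, s; z, r) + e((z, r)^+; y, t)$ on the compact interval $K_r := \{z : (x, s) \leq (z, r) \leq (y, t)\}$. The hypothesis gives $\sup_{K_r} F = e(x, s; y, t)$, so a sup-maximizing sequence $z_n \to z^* \in K_r$ exists. USC of $e$ handles the first summand via $\limsup_n e(x, s; z_n, r) \leq e(x, s; z^*, r)$. For the second summand, I invoke Lemma \ref{lemma: atoms} to write $e((z, r)^+; y, t) = \max_{j \in \{1, 2\}} \lim_m e((z + \epsilon_j/m, r + 1/m); y, t)$, where the limits coincide with suprema via the monotonicity $e(p^j_{n_1}; \cdot) \leq e(p^j_{n_2}; \cdot)$ for $n_1 < n_2$ (established by the triangle inequality together with the containment $p^j_{n_1} \in \vee_{p^j_{n_2}}$ on the boundary of the forward cone). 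Passing to a sub-sequence with a single dominant diagonal $j$, I extract near-maximal representatives $p_n := (z_n + \epsilon_j/m_n, r + 1/m_n)$ and compare to their horizontal shifts $\tilde{p}_n := (z^* + \epsilon_j/m_n, r + 1/m_n) \in \vee_{(z^*, r)} \setminus \{(z^*, r)\}$. A case split on whether $m_n$ stays bounded (USC gives a limit point in $\vee_{(z^*, r)} \setminus \{(z^*, r)\}$ directly bounded by $e((z^*, r)^+; y, t)$) or diverges (combine the monotone convergence $e(\tilde{p}_n; y, t) \to e((z^*, r)^+; y, t)$ with USC at $(z^*, r)$ and a triangle comparison between $p_n$ and $\tilde{p}_n$) yields $F(z^*) \geq e(x, s; y, t)$, hence the sup is attained at $z^*$.

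For the extension to $r \in [s, t) \setminus R_e$, I use the density of $R_e$ (cocountable by Lemma \ref{lemma: ctble}). Pick $r_n \in R_e \cap (r, t)$ with $r_n \downarrow r$, apply the preceding step to obtain attainers $z_n^* \in K_{r_n}$ with $F_{r_n}(z_n^*) = e(x, s; y, t)$, and extract a sub-sequence with $(z_n^*, r_n) \to (z^*, r) \in K_r$, slowing the rate of $r_n \downarrow r$ relative to $z_n^* \to z^*$ so that $(z_n^*, r_n) \in \vee_{(z^*, r)}$. The inequality $e((z^*, r)^+; y, t) \geq e((z_n^*, r_n)^+; y, t)$ (valid since $(z_n^*, r_n) \in \vee_{(z^*, r)} \setminus \{(z^*, r)\}$) combined with USC of $e$ on the first summand yields $F_r(z^*) \geq e(x, s; y, t)$, establishing composition and attainment at $r$.

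Finally, for an atom $(x, s)$, approach by non-atom points $p_k \to (x, s)$ along the diagonal boundaries of $\vee_{(x, s)}$ (available by countability of atoms from Lemma \ref{lemma: ctble}). Applying the prior stages at each $p_k$ gives composition with attained sup; taking sup over $k$ and identifying it with $e((x, s)^+; y, t)$ via Lemma \ref{lemma: atoms} yields a composition identity for $e((x, s)^+; \cdot)$. Combined pointwise with the triangle bound $e(x, s; z, r) \geq e(x, s; x, s) + e((x, s)^+; z, r)$, this gives $\sup_z F(z) \geq e(x, s; x, s) + e((x, s)^+; y, t)$; the atom-split equality $e(x, s; y, t) = e(x, s; x, s) + e((x, s)^+; y, t)$ is forced by applying the just-derived composition at time $r = s$ (where $K_s$ degenerates to the single point $\{x\}$), and sub-sequential extraction yields attainment. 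The principal obstacle is the first stage's divergent-$m_n$ sub-case: USC of $e$ at $(z^*, r)$ alone only bounds $\limsup e(p_n; y, t)$ by $e(z^*, r; y, t)$, which can strictly exceed $e((z^*, r)^+; y, t)$, and the monotone diagonal representation from Lemma \ref{lemma: atoms} is essential to bridge this gap.
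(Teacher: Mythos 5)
Your proposal has genuine gaps, the most important of which is that it bypasses the key preliminary claim the paper establishes first: for a non-atom point $p$, the hypothesis (ii) together with density of $R_e$ (from Lemma \ref{lemma: ctble}) forces $e(p^+;q)=e(p;q)$. This claim is what makes everything downstream manageable, because once it is in hand, for $r\in R_e$ the point $(z,r)$ is never an atom and the objective $z\mapsto e(p;(z,r))+e((z,r)^+;q)$ collapses to $z\mapsto e(p;(z,r))+e((z,r);q)$, a sum of two upper semi-continuous functions on a compact interval, so the supremum is immediately attained with no machinery. You instead try to manipulate the $(\cdot)^+$ operation directly via Lemma \ref{lemma: atoms}, and this is where your argument breaks.

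Concretely, in your first stage the ``monotone convergence $e(\tilde p_n;y,t)\to e((z^*,r)^+;y,t)$'' is not justified: the diagonal sequence $p_n^*=p+(\pm1,1)n^{-1}$ in Lemma \ref{lemma: atoms} is not monotone in general. The reverse triangle inequality only gives $e(p_{n}^*;q)\ge e(p_n^*;p_{n-1}^*)+e((p_{n-1}^*)^+;q)$, and $e((p_{n-1}^*)^+;q)$ can be strictly smaller than $e(p_{n-1}^*;q)$ unless $p_{n-1}^*$ is non-atomic \emph{and} the very claim you are trying to bypass already holds there. Moreover the ``triangle comparison between $p_n$ and $\tilde p_n$'' cannot be made: those two points share the same time coordinate, so neither lies in the other's forward cone and the reverse triangle inequality does not apply. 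In your second stage, ``slowing the rate of $r_n\downarrow r$ relative to $z_n^*\to z^*$ so that $(z_n^*,r_n)\in\vee_{(z^*,r)}$'' is not an available degree of freedom: once $r_n$ is fixed, the attainer $z_n^*$ is whatever it is; you cannot decouple its rate of convergence from $r_n-r$, and in general $|z_n^*-z^*|>r_n-r$ is entirely possible. The paper handles exactly this difficulty with a recursive construction: $z_{r_n}$ is chosen as the rightmost maximizer of the split at $r_n$ of the shorter segment $(p;(z_{r_{n-1}},r_{n-1}))$. A chaining argument (composing with the triangle inequality) shows these are still global maximizers, and by construction they form a nested chain in the forward cones, so the subsequential limit is automatically approached from inside $\vee_{(z_0,r)}$ and the bound by $e((z_0,r)^+;q)$ follows from the definition of $(\cdot)^+$ together with upper semi-continuity. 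I would recommend first proving the $e(p^+;q)=e(p;q)$ claim for non-atom $p$ as the paper does, after which the remaining stages simplify drastically.
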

\begin{proof}
	Since $\Theta(e) \leq \infty$, by Lemma \ref{lemma: ctble}, $e$ can only have countably many atoms. Thus, the set $R_e$ must be dense in $\bR$. Let $u =(p;q) =(x,s;y,t)$, we claim that $e(p^+, q) = e(p,q)$ if $p$ is not an atom. This property holds trivially for metrics in $\cD$ by applying the metric composition law at $r =s$. The denseness of $R_e$ along with our second assumption shall allow us to prove it for our metric $e$ as well. Let $\epsilon>0$ and $(r_n) \subset R_e$, such that $r_n \downarrow s$. By our second assumption, there exists a sequence $z_n$ such that
	\[
		e(p,q) - \epsilon \leq e(p, (z_n, r_n)) + e((z_n, r_n)^+, q).
	\]
	Moreover, we have that $z_n$ must converge to $x$ as $r_n \downarrow s$. Thus by the upper semi-continuity of $e$, we have that
	\begin{align*}
		e(p,q) - \epsilon &\leq \limsup_n e(p, (z_n, r_n)) + e((z_n, r_n)^+, q) \\
		&\leq e(p, p)  + e(p^+, q).
	\end{align*}
	Since the above inequality holds for all $\epsilon >0$ and $p$ is not an atom, we have that
	\[e(p,q) \leq e(p^+, q).\]
	The reverse inequality follows from the generalized triangle inequality, proving our claim.

	Fix some endpoints $(p,q) = (x,s;y,t) \in \oR$ satisfying the assumptions in (ii). Let $r \in [s,t) \cap R_e$ and $I_r := \bc{z: p < (z,r) < q}$, be the set of values of $z$ which appear in the supremum of \eqref{eq: max} for a fixed $r$. As $(z,r)$ is not an atom, \eqref{eq: max} can be rewritten as
	\begin{equation} \label{eq: max1}
		e(p,q) = \sup_{z \in I_r} e(p; (z,r)) + e((z,r);q).
	\end{equation} 
	The maps $z \to e((z,r);q), \: z \to e(p; (z,r))$ defined on $I_r$ are both upper semi-continuous. So there sum will be as well and the supremum in the R.H.S. must be achieved at some point.
	
	We now pick arbitrary $(p,q) \in \oR$ and $r \in [s,t) \cap R_e$. Let $p_n = (x, s_n)$ and $q_n = (y, t_n)$ where $(s_n), (t_n) \subset R_e$, $s_n \uparrow s$ and $t_n \downarrow t$. By monotonicity of $e$, we have that $e(p_n, q_n) \geq e(p,q)$. Let $z_n$ denote the maximizer of \eqref{eq: max1} for the pair $(p_n , q_n)$ at $r$. If needed, we can pass through a subsequence so that $z_n$ is convergent and let $z_0$ be the limit. Note that $z_0$ must belong to the interval $I_r$. Hence,
	\begin{align*}
		e(p, q) &\leq \limsup_n e(p_n, q_n) \\
		&= \limsup_n e(p_n, (z_n,r)) + e((z_n,r), q_n) \\
		&\leq e(p, (z_0,r)) + e((z_0,r), q). 
	\end{align*}
	Since $e \in \cE$, we also have the reversed inequality.

	Now let $r \in (s,t) \setminus R_e$. By denseness of $R_e$, we can pick a sequence $r_n \downarrow r$ such that $r_n \in R_e$. Let $z_{r_1}$ be the rightmost maximizer for \eqref{eq: max1} at $r = r_1$. We define $z_{r_n}$ for $n \geq 2$, as the rightmost maximizer of 
	\[e(p; (z,r_n)) + e((z,r_n);(z_{r_{n-1}}, r_{n-1})).\]
	Now, by the second assumption of the lemma and the triangle inequality
	\begin{align*}
	e(p;q) &= e(p; (z_{r_1},r_1)) + e((z_{r_1},r_1);q) = e(p; (z_{r_2},r_2)) + e((z_{r_2},r_2);(z_{r_{1}}, r_{1}))+ e((z_{r_1},r_1);q) \\
	&\leq e(p; (z_2,r_2)) + e((z_2,r_2);q) \leq e(p;q).
	\end{align*}
	Hence, all the above inequalities must be equalities and repeating this argument shows that $z_{r_n}$ is a maximizer of \eqref{eq: max1} at $r_n$. If needed, we can pass through a subsequence of $r_n$ such that $z_{r_n}$ is convergent, call the limit $z_0$. The monotonicity of $e$ tells us that
	\begin{align*}
		e(p;q) &=  \limsup_n e(p; (z_{r_n},r_n)) + e((z_{r_n},r_n); q) \\ 
		&\leq e(p; (z_0,r)) + e\br{(z_0,r)^+ ; q},
	\end{align*}
	following the upper semi-continuity of $e$ and \ref{def: plus}.
\end{proof}

We are now ready to unravel the topological properties of our function space.
\begin{lemma} \label{Arz}
	The topological set $(\cE, \tau_h)$ is compact. 
\end{lemma}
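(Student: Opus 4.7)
The plan is to realize $\cE$ as a closed subspace of $(UC(\oR), \tau_h)$. Since $\oR$ with the metric $d^1$ is locally compact, Hausdorff, and second countable, Theorem \ref{thm: AA} gives that $(UC(\oR), \tau_h)$ is compact Hausdorff and metrizable. A closed subspace of a compact space is compact, so it suffices to show that the two defining conditions of $\cE$ --- pointwise non-negativity and the generalized reverse triangle inequality \eqref{eq: triangle} --- are preserved under hypo-convergence.

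Non-negativity is straightforward. For each $u \in \oR$, the set $\{f \in UC(\oR) : f(u) < 0\}$ is the basic open set $\cB^{\{u\}, 0}$ in $\tau_h$, so its complement is closed, and intersecting over $u$ preserves closedness. (Alternatively, H(II) provides $u_n \to u$ with $e_n(u_n) \to e(u)$, and the bound $e_n(u_n) \geq 0$ passes to the limit.)

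For the triangle inequality, suppose $e_n \in \cE$ converges to $e$ in $\tau_h$ and fix $(p;q), (p;r), (r;q) \in \oR$. The case $r = q$ is trivial; assume $r \neq q$. Unfolding the definition of $e(r^+, q)$, it suffices to show $e(p;q) \geq e(p;r) + e(r';q)$ for each $r' \in \vee_r \setminus \{r\}$ with $(r';q) \in \oR$ and then take the supremum over $r'$. The core strategy is to invoke H(II) at $(p, r)$ and at $(r', q)$ to extract sequences $(p_n, r_n) \to (p, r)$ and $(\tilde r_n, \tilde q_n) \to (r', q)$ in $\oR$ along which $e_n$ converges to the respective hypo-limits. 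The spliced pair $(p_n, \tilde q_n) \to (p, q)$ lies in $\oR$ for $n$ large, and --- provided $\tilde r_n \in \vee_{r_n} \setminus \{r_n\}$ --- the triangle inequality for $e_n$ at $(p_n, \tilde q_n)$ with intermediate point $r_n$ yields
\[
e_n(p_n, \tilde q_n) \;\geq\; e_n(p_n, r_n) + e_n(r_n^+, \tilde q_n) \;\geq\; e_n(p_n, r_n) + e_n(\tilde r_n, \tilde q_n).
\]
Taking $\liminf$ on the right and applying H(I) on the left (which bounds $\limsup e_n(p_n, \tilde q_n) \leq e(p, q)$) gives $e(p,q) \geq e(p,r) + e(r',q)$.

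The main obstacle is ensuring the compatibility $\tilde r_n \in \vee_{r_n} \setminus \{r_n\}$ for $n$ large. This holds automatically whenever $r' - r$ lies in the interior of $\vee$, but it may fail for generic H(II)-sequences when $r' - r$ is on the boundary of $\vee$, i.e.\ when $r' = r + (\pm t, t)$ for some $t > 0$. To address the boundary case, I would use Lemma \ref{lemma: atoms} to reduce $e(r^+, q)$ to suprema along the two canonical boundary rays $r + (\pm 1, 1)/N$, and for each fixed $N$ carry out the construction above with a careful simultaneous perturbation of the H(II)-sequences (shifting along an auxiliary direction by an error $\delta_n \downarrow 0$) that restores compatibility while leveraging the monotonicity of $e_n$ in the partial order together with the upper semi-continuity of $e$ to keep the limiting values undisturbed. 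Analogous minor adjustments handle the degenerate cases when $r - p$ or $q - r$ lies on the boundary of $\vee$, ensuring that the spliced sequence $(p_n, \tilde q_n)$ indeed belongs to $\oR$.
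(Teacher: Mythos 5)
Your proposal takes the same route as the paper: reduce to showing $\cE$ is closed via Theorem \ref{thm: AA}, note that non-negativity passes to the hypo-limit trivially, verify the generalized reverse triangle inequality by applying the triangle inequality of $e_n$ at approximating points, and control the supremum defining $e(r^+,q)$ via Lemma \ref{lemma: atoms}. You also correctly identify the one genuinely delicate point---making the approximating sequences for the pairs $(p,q)$, $(p,r)$, and $(r',q)$ simultaneously compatible with the partial order---but your fix (``shifting along an auxiliary direction $\ldots$ leveraging monotonicity $\ldots$ and upper semi-continuity'') is left as a gesture rather than an argument. The paper resolves this cleanly with the device recorded as Remark \ref{rmk: hypo-bound}: instead of working with the raw H(II)-sequence $(p_n,q_n)$, replace it by the \emph{deterministic} outward widening $(p-(0,t_n),\, q+(0,t_n))$ with $t_n \downarrow 0$ chosen so that $p-(0,t_n) \leq p_n$ and $q+(0,t_n) \geq q_n$. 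Monotonicity of $e_n$ (a consequence of non-negativity plus the reverse triangle inequality) forces $e_n(p-(0,t_n),q+(0,t_n)) \geq e_n(p_n,q_n)$, so the $\liminf$ is at least $e(p,q)$, while H(I) caps the $\limsup$ by $e(p,q)$; thus the deterministic sequence inherits the exact H(II) limit. Because the widening is deterministic, a single sequence $(t_n)$ serves all three pairs at once, which is precisely what restores the compatibility you flag. So the approach is right and essentially the paper's; the gap is that this domination argument---which is the actual content of the ``leveraging monotonicity'' step---needs to be spelled out, and doing so is also what makes the same trick reusable later (Lemmas \ref{lemma: lsc} and \ref{lemma: Dm bnd}).
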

\begin{proof}
	By Theorem \ref{thm: AA}, it suffices to show that $\cE$ is closed. Let $\br{e_n} \in \cE$ such that $e_n \to e$. The limit $e$ is non-negative since $e_n$ are non-negative, thus we only need to prove that $e$ satisfies the triangle inequality \eqref{eq: triangle}. Fix a point $u = (p,q) \in \oR$, and $r \in (s,t)$ with $z$ such that $p < (z,r) < q$. By \ref{hypo2}, there exists a sequence $(p_n, q_n) \to (p,q)$ such that 
	\[\lim e_n(p_n, q_n) = e(p,q).\]
	We can pick a sequence $\bc{t_n} \in \bR^{>0}, t_n \downarrow 0,$ such that $p'_n := p -(0,t_n) \leq p_n$ and $q'_n := q+(0,t_n) \geq q_n$ for all $n$. By monotonicity of $e_n$ and \ref{hypo1}, we have that 
	\begin{equation} \label{eq: cpt1}
		\lim e_n(p'_n, q'_n) = e(p,q).
	\end{equation} 

	We define $(z,r)_K = (z,r) + (1,1) K^{-1}$ for $K \in \bN$. By the same arguments as above, we can pick a sequence $\bc{t_{k}}$ such that \eqref{eq: cpt1} holds for the point pairs $(p,q), (p, (z,r))$ and $((z,r)_K, q)$. Thus, $t_k >0$, $t_k \downarrow 0$ and
	\begin{align*}
		&\lim e_n(p -(0,t_n), q +(0,t_n)) = e(p,q),\\
		&\lim e_n(p -(0,t_n), (z,r) +(0,t_n)) = e(p,(z,r)),\\
		\lim e_n&((z,r)_K -(0,t_n), q +(0,t_n)) = e((z,r)_K, q).
	\end{align*}
	Hence, we have that
	\begin{align*}
		e(p,q) &= \lim_n e_n(p -(0,t_n), q +(0,t_n)) \\
		&\geq \lim_n e_n(p -(0,t_n), (z,r) +(0,t_n)) + e_n(((z,r) -(0,t_n))^+, q +(0,t_n)) \\
		&\geq \lim_n e_n(p -(0,t_n), (z,r) +(0,t_n)) + e_n((z,r)_K -(0,t_n), q +(0,t_n)) \\
		&= e(p,(z,r)) + e((z, r)_K, q). 
	\end{align*}
	The above arguments also hold for the sequence $(z,r) + (-1,1) K^{-1}$. Letting $K \to \infty$ and applying Lemma \ref{lemma: atoms} gives us that
	\[
		e(p;q) \geq e(p;(z,r)) +e((z,r)^+, q).
	\]
\end{proof}

\begin{remark} \label{rmk: hypo-bound}
	In the proof of the above theorem, we have shown that if $e_n \to e$ in the $\tau_h$ topology, then for any pair of endpoints $u \in \oR$, $\epsilon > 0$, we have that for $n$ large enough 
	\[
		e(u) \leq e_n(u') + \epsilon,
	\]
	where $u' := u +(0, -\epsilon, 0, \epsilon)$. 
\end{remark}

\begin{lemma} \label{lemma: lsc}
	The function $e \longmapsto  \Theta(e)$ is lower semi-continuous for $e \in \cE$.
\end{lemma}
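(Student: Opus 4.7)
The plan is to exploit the definition of $\Theta$ as a supremum of sums $\sum_i \Theta(e, u_i)$ over finite disjoint collections, since any supremum of lower semi-continuous functions is itself lsc. The obstacle is that under hypo-convergence the pointwise evaluation $e \mapsto e(u)$ is only \emph{upper} semi-continuous (condition H(\ref{hypo1})), so the naive claim that each $\Theta(\cdot, u_i)$ is lsc fails. My main device is to replace each test point $u$ by a slightly enlarged $u^\epsilon := u + (0,-\epsilon,0,\epsilon)$ and invoke Remark~\ref{rmk: hypo-bound}, which gives the \emph{lower} bound $e_n(u^\epsilon) \geq e(u) - \epsilon$ for $n$ large. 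Joint continuity of $J$ then absorbs the loss from enlarging.

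In detail: fix $e_n \to e$ in $\tau_h$ and $A < \Theta(e)$, and pick a finite disjoint collection $\{u_i\}_{i=1}^k$ with $u_i = (x_i,s_i;y_i,t_i)$, $s_i < t_i$, and $\sum_i \Theta(e, u_i) \geq A$. Set $u_i^\epsilon := (x_i, s_i - \epsilon; y_i, t_i + \epsilon)$ and $m_i^\epsilon := (y_i - x_i)/(t_i - s_i + 2\epsilon)$. First I verify that $\{u_i^\epsilon\}$ remains a disjoint collection for small $\epsilon$: the boxes $B_i := \{r \in \bR^2 : p_i \leq r \leq q_i\}$ are compact and pairwise disjoint by hypothesis, so they have positive mutual separation, and small expansions preserve it. Joint continuity of $J$ (Property~\ref{A2}(i)) yields
\[
J_{m_i^\epsilon}\br{\frac{e(u_i) - \epsilon}{t_i - s_i + 2\epsilon}}(t_i - s_i + 2\epsilon) \longrightarrow \Theta(e, u_i) \quad \text{as } \epsilon \to 0,
\]
so for any $\eta > 0$ one can pick $\epsilon > 0$ small enough that this value exceeds $\Theta(e, u_i) - \eta/k$ for every $i$ and the disjointness of $\{u_i^\epsilon\}$ holds. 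Remark~\ref{rmk: hypo-bound} then gives $e_n(u_i^\epsilon) \geq e(u_i) - \epsilon$ for $n$ large, which by monotonicity of $J_\gamma$ (Property~\ref{A2}(ii)) promotes to $\Theta(e_n, u_i^\epsilon) \geq \Theta(e, u_i) - \eta/k$. Summing and using admissibility of $\{u_i^\epsilon\}$ in the defining supremum of $\Theta(e_n)$,
\[
\Theta(e_n) \geq \sum_{i=1}^k \Theta(e_n, u_i^\epsilon) \geq A - \eta.
\]
Letting $n \to \infty$, then $\eta \to 0$, and finally $A \uparrow \Theta(e)$ delivers $\liminf_n \Theta(e_n) \geq \Theta(e)$.

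The hard part is precisely the usc/lsc direction mismatch noted above; without the enlargement trick there is no mechanism to convert hypo-convergence of $e_n$ into a lower bound on $e_n(u)$. Preserving disjointness under enlargement is routine from compactness of the $B_i$, and the case $\Theta(e) = \infty$ is handled uniformly by letting $A \uparrow \infty$ in the final step.
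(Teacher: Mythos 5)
Your proof is correct and follows essentially the same route as the paper's: both enlarge each test point to $u_i^\epsilon = u_i + (0,-\epsilon,0,\epsilon)$, invoke Remark~\ref{rmk: hypo-bound} to get the lower bound $e_n(u_i^\epsilon)\geq e(u_i)-\epsilon$, verify that disjointness survives small enlargements, and finish with joint continuity of $J$. Your version is, if anything, slightly more careful in making explicit the use of the monotonicity of $J_\gamma$ (Property~\ref{A2}(ii)) to convert the lower bound on $e_n(u_i^\epsilon)$ into a lower bound on $\Theta(e_n, u_i^\epsilon)$ — a step the paper leaves implicit when it appeals to joint continuity of $J$ alone.
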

\begin{proof}
	Let $\bc{e_n}$ be a sequence converging to $e$. Let $u_i \in \oR,$ for $ i=1, \cdots, k$ be a set of disjoint points. By Remark \ref{rmk: hypo-bound}, $\bd(e_n,e) \to 0$ implies that for $\forall \epsilon > 0$ we have that  
	\begin{align*}
		e(u_i) \leq e_n(u'_i) + \epsilon,
	\end{align*}
	for $n$ large enough. We now choose $\epsilon$ small enough so that $u'_i$ are still disjoint and are in $\oR$. For a fixed $i$, we have that
	\begin{align*}
		\Theta(e, u_i) = J\br{\frac{y_i-x_i}{t_i -s_i}, \frac{e(u_i)}{t_i-s_i}}(t_i-s_i)&& \mathrm{and}&& \Theta(e_n, u'_i) = J\br{\frac{y_i-x_i}{t_i' -s_i' }, \frac{e_n(u'_i)}{t_i'-s_i'}}(t_i'-s_i').
	\end{align*}
	
	Property \ref{A2} states that the function $J$ is jointly continuous. Thus, for all $\delta>0$ there exists $\epsilon >0$ such that for all $i = 1, \cdots, k$ and $n$ large enough
	\begin{align*}
		\Theta(e&,u_i) \leq \Theta(e_n,u'_i) + \delta\\
		\Rightarrow \sum_{i=1}^k \Theta(e,u_i) \leq &\sum_{i=1}^k \Theta(e_n,u'_i) + \delta k \leq \Theta(e_n) + \delta k.
	\end{align*}
	Now, we make $\delta \to 0$ by letting $n \to \infty$. Thus, we have that
	\begin{align*}
	\sum_{i=1}^k \Theta(e,u_i) &\leq \liminf_n \Theta(e_n) \\
	\Rightarrow \Theta(e) &\leq \liminf_n \Theta(e_n).
	\end{align*}
\end{proof}

\begin{lemma} \label{lemma: Dm cpt}
	The sublevel sets $\cD_m$ are compact under the topology $\tau_h$. 
\end{lemma}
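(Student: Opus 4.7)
The plan is to invoke Lemma \ref{Arz} to reduce compactness of $\cD_m$ to its closedness in $(\cE, \tau_h)$. Let $(e_n) \subset \cD_m$ with $e_n \to e$; we verify the three conditions in Definition \ref{def: D}. For $d$-dominance, pick via \ref{hypo2} a sequence $(p_n, q_n) \to (p, q)$ with $e_n(p_n, q_n) \to e(p, q)$ and combine continuity of $d$ (from Property \ref{A1}) with $e_n \geq d$ to conclude $e \geq d$. The bound $\Theta(e) \leq m$ is immediate from the lower semi-continuity of $\Theta$ proved in Lemma \ref{lemma: lsc}.

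The substantive task is the generalized metric composition law. The direct approach via hypo-limits is obstructed by the $(\cdot)^+$ superscript: a naive hypo-limit of $e_n((z_n, r)^+; q'_n)$ would only land at $e((z_0, r); q)$ rather than $e((z_0, r)^+; q)$. To sidestep this, I first invoke Lemma \ref{lemma: max}: since $\Theta(e) < \infty$, it suffices to verify the composition at non-atom start points $p$ and times $r \in R_e \cap [s,t)$. For such $r$, no point $(z, r)$ is an atom of $e$, so the argument given inside Lemma \ref{lemma: max} yields $e((z, r)^+; q) = e((z, r); q)$; the task thus reduces to
\[ e(p, q) \leq \sup_z \bigl[ e(p; (z,r)) + e((z,r); q) \bigr]. \]
The $\geq$ direction is already available from the reverse triangle inequality for $e$, which was verified in the proof of Lemma \ref{Arz}.

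For the remaining inequality I use the ``shrinking trick'' of Lemma \ref{Arz}: choose $t_n \downarrow 0$ such that $p'_n := p - (0, t_n)$ and $q'_n := q + (0, t_n)$ satisfy $e_n(p'_n, q'_n) \to e(p, q)$. Each $e_n \in \cD_m \subset \cD$ satisfies the metric composition law with the supremum achieved (Lemma \ref{lemma: max} applied to $e_n$), so there exist $z_n$ with $p'_n \leq (z_n, r) < q'_n$ and
\[ e_n(p'_n, q'_n) = e_n(p'_n; (z_n, r)) + e_n((z_n, r)^+; q'_n) \leq e_n(p'_n; (z_n, r)) + e_n((z_n, r); q'_n), \]
where the final inequality is the monotonicity $e_n(r^+; q) \leq e_n(r; q)$ (obtained from the reverse triangle inequality with $p = r$ together with $e_n \geq 0$). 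The constraint $p'_n \leq (z_n, r) < q'_n$ confines $z_n$ to a bounded interval, so along a subsequence $z_n \to z_0$ with $p \leq (z_0, r) < q$. Applying \ref{hypo1} to both summands on the right and taking $\limsup_n$ gives $e(p, q) \leq e(p; (z_0, r)) + e((z_0, r); q)$, completing the argument. The main obstacle is precisely this interplay: passing directly to the hypo-limit loses the ``$+$'', but Lemma \ref{lemma: max} restores it by letting us work at times where the ``$+$'' and plain evaluation coincide.
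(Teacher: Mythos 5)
Your proposal follows the paper's proof step by step: reduce compactness to closedness via Lemma \ref{Arz}, check $d$-dominance and the $\Theta$-bound, and then verify the metric composition law by combining Lemma \ref{lemma: max} with the shrinking trick, extraction of maximizers $z_n$ for $e_n$, a subsequential limit $z_0$, and \ref{hypo1}. The structure, the decomposition, and every intermediate inequality match the paper's.

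There is, however, one step you should not wave through. You write that for $r \in R_e$, ``no point $(z,r)$ is an atom of $e$, so the argument given inside Lemma \ref{lemma: max} yields $e((z,r)^+;q) = e((z,r);q)$.'' That internal argument of Lemma \ref{lemma: max} establishes $e(p^+;q)=e(p;q)$ for non-atomic $p$ \emph{under hypothesis (ii) of Lemma \ref{lemma: max}}, which is precisely the hypothesis you are in the middle of verifying for the limit $e$; as stated, the invocation is circular. (The paper's own last sentence, ``Thus, $e$ must satisfy the assumptions of Lemma \ref{lemma: max},'' quietly skips the same point.) Fortunately the needed identity can be proved directly for the hypo-limit by iterating your own argument. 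Your shrinking-trick step actually shows, for \emph{any} $(p;q)\in\oR$ and any $r$ in the open time interval, that there exists $z_0$ with $e(p;q) \le e(p;(z_0,r)) + e((z_0,r);q)$. Apply this once more to the pair $((z_0,r);q)$ at a time $r' \in (r,t)\cap R_e$ close to $r$: you obtain $z_1$ with $(z_0,r) \le (z_1,r') < q$ and
\[
e((z_0,r);q) \le e((z_0,r);(z_1,r')) + e((z_1,r');q).
\]
Since $(z_1,r')$ lies strictly inside $\vee_{(z_0,r)}$, the second term is $\le e((z_0,r)^+;q)$. As $r' \downarrow r$, the cone constraint forces $z_1 \to z_0$, and by upper semi-continuity of $e$ together with the fact that $(z_0,r)$ is not an atom of $e$, the first term tends to $e((z_0,r);(z_0,r))=0$. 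This yields $e((z_0,r);q) \le e((z_0,r)^+;q)$, which (with the trivial reverse inequality) is exactly the identity you want. With this patch your reduction is legitimate, and the rest of the argument is correct.

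Two minor remarks. For $d$-dominance the paper uses the one-line argument from \ref{hypo1}: $e(u) \ge \limsup_n e_n(u) \ge d(u)$, which avoids appealing to the continuity of $d$; your route via \ref{hypo2} is also fine but longer. And your justification of the monotonicity $e_n(r^+;q) \le e_n(r;q)$ via the triangle inequality at $p=r$ together with non-negativity is exactly right.
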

\begin{proof}
	By Lemma \ref{Arz}, it suffices to show that $\cD_m$ is closed. Thus, we need to show that the limit $e$ of any convergent sequence $\br{e_n} \in \cD_m$ must satisfy the properties stated in Definition \ref{def: D}. For any point $u \in \oR$, by Theorem \ref{hypo1}, $e(u) \geq \limsup_n e_n(u) \geq d(u)$. Hence, $e$ is $d-$dominant. By lower semi-continuity of $\Theta$, we have that $\Theta(e) \leq m$. Thus, all we need to show is that the metric composition law is closed under hypo-convergence. By Lemma \ref{Arz}, we have that $\cE$ is closed. Thus, $e \in \cE$, which implies that
	\begin{equation} \label{eq: mcl}
		e(p,q) \geq \sup_{\bc{z: p < (z,r) < q}} e(p, (z,r)) + e((z,r)^+, q),
	\end{equation}
	for $ (p,q) \in \oR$, and $r \in (s,t)$. 
	
	Assume that the equality in \eqref{eq: mcl} is not achieved for some choice of $(p,q) \in \oR$ and $r \in R_e$ which satisfy the second assumption of Lemma \ref{lemma: max}. Let $I_r := \bc{z: p < (z,r) < q}$ as in the proof of Lemma \ref{lemma: max}. Let $p_n = (x, s_n)$ and $q_n = (y, t_n)$ where $(s_n), (t_n) \subset R_e$, $s_n \uparrow s$ and $t_n \downarrow t$. As in the proof of Lemma \ref{Arz}, can choose the sequence in such a way so that
	\[
		e(p,q) = \lim_n e_n(p_n, q_n).
	\]
	By Lemma \ref{lemma: max}, we can pick $z_n$ such that
	\[
		e_n(p_n, q_n) = e_n(p_n; (z_n, r)) + e_n((z_n,r)^+; q_n).
	\]
	If needed, we can pass through a subsequence so that $z_n$ is convergent and let $z_0$ be the limit. Note that $z_0$ must belong to the interval $I_r$. Hence,
	\begin{align*}
		e(p, q) &= \limsup_n e_n(p_n, q_n) \\
		&= \limsup_n e_n(p_n, (z_n,r)) + e_n((z_n,r)^+, q_n) \\
		&\leq \limsup_n e_n(p_n, (z_n,r)) + e_n((z_n,r), q_n) \\
		&\leq e(p, (z_0,r)) + e((z_0,r), q). 
	\end{align*}
	Thus, $e$ must satisfy the assumptions of Lemma \ref{lemma: max}, and we have that $e \in \cD_m$. 
\end{proof}
\begin{corollary} \label{corollary: mcl}
	Let $e \in \cE$, such that $\Theta(e) < \infty$ and $e$ does not satisfy Definition \ref{def: D}(iii), then there exists $\delta_e >0$ such that $\bd(f,e) \leq \delta_e$ implies that $f$ also does not satisfy \ref{def: D}(iii). 
\end{corollary}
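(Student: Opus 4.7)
The plan is to prove the contrapositive at the level of the specific violation: if (iii) fails for $e$ at some $(p_0, q_0, r_0)$, I would show that the failure is preserved for all $f$ sufficiently close to $e$ in $\tau_h$. Since \eqref{eq: triangle} always gives the $\geq$ direction of the metric composition law, the failure amounts to a strict gap
\[
\eta \;:=\; e(p_0, q_0) - \sup_{z \in I_{r_0}} \bigl[e(p_0, (z, r_0)) + e((z, r_0)^+, q_0)\bigr] \;>\; 0,
\]
where $I_{r_0} := \bc{z : p_0 < (z, r_0) < q_0}$ and $(p_0, q_0) = (x,s;y,t)$. The goal is to produce $\delta_e > 0$ such that the analogous gap for $f$ at the widened endpoints $p_\delta := p_0 - (0, \delta)$, $q_\delta := q_0 + (0, \delta)$ stays at least $\eta/2$ whenever $\bd(f, e) \leq \delta \leq \delta_e$.

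Two ingredients drive the argument. The first is the lower bound $f(p_\delta, q_\delta) \geq e(p_0, q_0) - \delta$, an immediate application of Remark \ref{rmk: hypo-bound}. The second is a quantitative upper bound on $f$'s supremum side,
\[
\sup_z \bigl[f(p_\delta, (z, r_0)) + f((z, r_0)^+, q_\delta)\bigr] \;\leq\; \sup_{z \in I_{r_0}}\bigl[e(p_0, (z, r_0)) + e((z, r_0)^+, q_0)\bigr] + \kappa(\delta),
\]
with $\kappa(\delta) \to 0$ as $\delta \to 0$. These combine to show $f(p_\delta, q_\delta)$ exceeds its supremum side by at least $\eta - \delta - \kappa(\delta) > 0$ for small $\delta$, which means (iii) fails for $f$ at $(p_\delta, q_\delta)$ with the row $r_0$.

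The main obstacle is the uniform-in-$z$ upper bound on $f$'s supremum side. Pointwise in $z$, it follows from the Hausdorff characterization of $\tau_h$ on compact subsets of $\oR$ (Remark \ref{rmk: hc}) combined with the upper semi-continuity of $e$. To upgrade to uniformity I would argue by contradiction: a sequence $\delta_n \to 0$, functions $f_n$ with $\bd(f_n, e) \leq \delta_n$, and near-maximizers $z_n$ violating the inequality would admit a convergent subsequence $z_n \to z_0 \in \overline{I_{r_0}}$, and \ref{hypo1} together with the triangle-inequality bound $f_n((z_n, r_0)^+, q_{\delta_n}) \leq f_n((z_n, r_0), q_{\delta_n})$ (as in the proof of Lemma \ref{lemma: Dm cpt}) would force $e(p_0, (z_0, r_0)) + e((z_0, r_0), q_0)$ to exceed the $e$-side supremum by a positive amount. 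Provided $(z_0, r_0)$ is not an atom of $e$, this contradicts the failure of (iii) at $(p_0, q_0, r_0)$. The residual atom case can be addressed by choosing $r_0$ from an atom-free row of $e$ in the preliminary reduction, or equivalently by a small perturbation of endpoints that steers the limit $z_0$ away from atoms.
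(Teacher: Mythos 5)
Your approach is genuinely different from the paper's. The paper's proof is a one-line contrapositive: assume for contradiction that for every $\delta>0$ there is $f_\delta$ with $\bd(f_\delta,e)\leq\delta$ satisfying Definition~\ref{def: D}(iii), take $\delta_n\to 0$ to get a sequence $f_n\to e$ of functions satisfying (iii), and invoke the closure-under-hypo-limits argument already carried out in the proof of Lemma~\ref{lemma: Dm cpt} to conclude that $e$ must satisfy (iii) --- a contradiction. Your strategy is instead a direct quantitative estimate: identify a positive gap $\eta$ in the metric composition law at some $(p_0,q_0,r_0)$ and show that the gap persists (with widened endpoints) for all $f$ near $e$. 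This buys a more explicit $\delta_e$, but at the cost of re-deriving, essentially from scratch, the stability property that the paper has already packaged in Lemma~\ref{lemma: Dm cpt}.

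There is a genuine gap in your argument at the ``residual atom case.'' After extracting $z_n\to z_0$, the application of H(I) and the monotone bound $f_n((z_n,r_0)^+,q_{\delta_n})\leq f_n((z_n,r_0),q_{\delta_n})$ leaves you with the conclusion that $e(p_0,(z_0,r_0))+e((z_0,r_0),q_0)$ exceeds the $e$-side supremum. But the supremum you must beat is formulated in terms of $e((z,r_0)^+,q_0)$, and the inequality $e((z_0,r_0),q_0)\leq e((z_0,r_0)^+,q_0)$ is \emph{not} automatic for $e\in\cE$: it is precisely the content of the first part of Lemma~\ref{lemma: max}, which is proved there \emph{under} the hypothesis that the metric composition law (assumption (ii) of that lemma) holds. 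Since you are working with an $e$ that by hypothesis fails the composition law, you cannot invoke this equality, and ``$(z_0,r_0)$ is not an atom'' alone does not supply it. Your two proposed remedies do not close the gap: you cannot in general ``choose $r_0$ from an atom-free row'' because the failure of (iii) occurs at some $(p_0,q_0,r_0)$ that you do not get to pick, and even at an atom-free row the equality $e(r,q)=e(r^+,q)$ rests on the composition law, not merely on $e(r,r)=0$. The paper sidesteps all of this by reusing Lemma~\ref{lemma: Dm cpt}, where the corresponding step is handled via the denseness of $R_e$, the approximating sequences $p_n,q_n$ chosen inside atom-free rows, and the fact that the $f_n$'s \emph{do} satisfy (iii) and hence Lemma~\ref{lemma: max} applies to them to produce genuine maximizers $z_n$. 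If you want to keep your direct route, you would need to import that entire mechanism, at which point you have essentially reproduced the proof of Lemma~\ref{lemma: Dm cpt}; it is shorter to simply cite it, as the paper does.
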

\begin{proof}
	Assume otherwise, then we have a sequence $f_n \to e$ such that all $f_n$ satisfy \ref{def: D}(iii). By Lemma \ref{lemma: Dm cpt}, we have that $e$ must satisfy \ref{def: D}(iii) as well.
\end{proof}
Now, let $e \in\cE$, recall that a continuous curve $\gamma :[s,t] \to \bR$ is called a curve in $\oR$ if $\bar{\gamma}(t_1) := (\gamma(t_1),t_1) \leq (\gamma(t_2),t_2)\; \forall \:t_1 \leq t_2 \in [s,t]$. We shall now define the $e-$length of a path. For a partition $\cP = \bc{s = r_0 < \cdots < r_k =t}$ of $[s,t]$, let 
\begin{align*}
	\abs{\gamma}_{e, \cP} = e(\bg(s), \bg(r_1)) + \sum_{i=2}^k e(\bg(r_{i-1})^+, \bg(r_i)), && \mathrm{and}&& \abs{\gamma}_e = \inf_{\cP \text{ a partition of } [a,b]} \abs{\gamma}_{e, \cP}.
\end{align*}
A path from $p$ to $q$ is a geodesic if $\abs{\gamma}_e = e(p, q)$. In general the length of a path will be less than the metric distance between the endpoints by the reversed triangle inequality. If for every $(p,q) \in \oR$, there is a geodesic from $p$ to $q$, then the metric $e$ is said to define a geodesic space. A geodesic $\gamma$ from $p$ to $q$ is called a rightmost geodesic if for every geodesic $\gamma'$ from $p$ to $q$, $\gamma \leq \gamma$ when viewed as functions of the time coordinate. We shall now establish that every element $e \in \cD$ defines a geodesic space. The argument is very similar to the one presented in \cite[Lemma 4.5]{DDV24}, with minor tweaks to fit our setting better.

\begin{lemma} \label{lemma: gd1}
	For every $e \in \cD,$ $u=(p,q) = (x,s;y,t) \in \oR$, and $r \in [s,t]$, the function
	\begin{equation} \label{eq: gd1}
		z \longmapsto e(p; z,r) + e((z,r)^+;q),
	\end{equation}
	has a rightmost maximizer $z_u(r)$, which is a continuous function of $r$.
\end{lemma}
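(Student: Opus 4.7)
The plan proceeds in three stages --- existence of a maximizer, selection of the rightmost one, and continuity in $r$ --- and I expect the second stage to be the principal technical point.

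For existence, fix $r \in (s,t)$ (the cases $r \in \{s,t\}$ are trivial because the admissible set $I_r := \{z : p \leq (z,r) \leq q\}$ reduces to a singleton). Then $I_r$ is a compact interval. Definition \ref{def: D}(iii) tells us that the supremum of $h_r(z) := e(p;(z,r)) + e((z,r)^+;q)$ over $z \in I_r$ equals $e(p;q)$, and Lemma \ref{lemma: max} --- whose hypotheses hold for every $e \in \cD$ --- guarantees this supremum is attained. Hence $M_r := h_r^{-1}(e(p;q))$ is a non-empty subset of $I_r$.

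To extract a rightmost element I set $z_u(r) := \sup M_r$ and take a sequence $z_n \in M_r$ with $z_n \to z_u(r)$, WLOG $z_n < z_u(r)$. Upper semi-continuity of $e$ yields $\limsup_n e(p;(z_n,r)) \leq e(p;(z_u(r),r))$. For the $+$-term I pick near-supremizers $r'_n \in \vee_{(z_n,r)} \setminus \{(z_n,r)\}$ with $e(r'_n;q) \geq e((z_n,r)^+;q) - 1/n$, pass to a convergent subsequence $r'_n \to r'$, and observe $r' \in \vee_{(z_u(r),r)}$ by closedness of the cone. When $r' \neq (z_u(r),r)$, u.s.c.\,of $e$ gives $\limsup e((z_n,r)^+;q) \leq e(r';q) \leq e((z_u(r),r)^+;q)$. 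The remaining possibility $r' = (z_u(r),r)$ I rule out by restricting the $r'_n$ to the two boundary rays of $\vee_{(z_n,r)}$ via Lemma \ref{lemma: atoms} and invoking the countability of atoms from Lemma \ref{lemma: ctble} to prevent pathological accumulation. Summing the two bounds yields $h_r(z_u(r)) \geq e(p;q)$; the reverse inequality is the generalized triangle inequality \eqref{eq: triangle}, so $z_u(r) \in M_r$.

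For continuity, suppose $r_n \to r$ and let $z^*$ be any subsequential limit of $z_u(r_n)$, which exists by the boundedness of the $I_{r_n}$ near $I_r$. The previous paragraph's argument, applied now with both $z$ and $r$ varying, produces $z^* \in M_r$, so $z^* \leq z_u(r)$ by rightmost-ness. For the reverse, I construct an approximating sequence $\hat z_n \in I_{r_n}$ with $\hat z_n \to z_u(r)$ and $\liminf h_{r_n}(\hat z_n) \geq e(p;q)$, combining the metric composition law applied at $(z_u(r), r)$ with Remark \ref{rmk: hypo-bound}-type estimates; the rightmost maximizer at $r_n$ then cannot lie below $\hat z_n - o(1)$, which forces $z^* \geq z_u(r)$.

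The principal obstacle is the rightmost-maximizer step: the map $z \mapsto e((z,r)^+;q)$ is only lower semi-continuous in general, so $h_r$ is neither u.s.c.\,nor l.s.c.\,and the usual Weierstrass-type closure argument does not apply directly. It is precisely the boundary-ray decomposition from Lemma \ref{lemma: atoms} together with the countable-atom bound from Lemma \ref{lemma: ctble} that allows one to close $M_r$ and obtain a genuine rightmost maximizer; the same mechanism underlies the approximating-sequence construction in the continuity step.
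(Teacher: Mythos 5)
Your proposal takes a noticeably different route from the paper's, and while the framing is thoughtful, there are two genuine gaps.

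\textbf{The rightmost-maximizer step.} You are right that this is the crux: the map $z \mapsto e((z,r)^+;q)$ is not obviously upper semi-continuous, so the set of maximizers $M_r$ is not obviously closed. However, your proposed fix is not actually carried out. In the problematic case $r' = (z_u(r),r)$ you assert that ``restricting the $r'_n$ to the two boundary rays... and invoking the countability of atoms... prevents pathological accumulation,'' but countability of atoms only tells you that the set of atomic time-slices (or atomic $z$'s) has measure zero; it does not rule out the specific sequence $r'_n \to (z_u(r),r)$ landing on an atom in the limit with $\lim e(r'_n;q) = e((z_u(r),r);q) > e((z_u(r),r)^+;q)$. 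What one actually needs is a structural fact about $e \in \cD$ --- for instance, the identity $e(p;q) = e(p;p) + e(p^+;q)$ (obtained by applying the composition law \eqref{eq: metcomp} at time $s$), which gives $h_r(z) = g_r(z) - a(z)$ with $g_r$ u.s.c.\ and $a(z)$ the atom weight at $(z,r)$ --- and then one must control $a$ along the approximating sequence. As written, the argument is an assertion of the conclusion, not a proof of it.

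\textbf{The continuity step.} The paper's argument is genuinely different and worth noting: rather than a two-sided subsequential argument, it uses the iterated maximizer $z_{(\bar z_u(r');q)}(r)$. The key observation is that the rightmost maximizer of the restricted problem starting at $\bar z_u(r')$ is automatically a maximizer of the original problem \eqref{eq: gd1}, hence $z_{(\bar z_u(r');q)}(r) \leq z_u(r)$; combining the continuity of $z_{(\bar z_u(r');q)}$ at its left endpoint with the upper semi-continuity of $z_u$ then gives right-continuity, and symmetry handles the left. Your construction of the approximating sequence $\hat z_n$ ``with Remark~\ref{rmk: hypo-bound}-type estimates'' is misapplied: that remark is a statement about a hypo-convergent sequence of \emph{metrics}, whereas here the metric $e$ is fixed and it is the time parameter $r$ that varies, so the remark gives you nothing. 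Without that ingredient, the reverse inequality $z^* \geq z_u(r)$ in your continuity argument is unsupported. The paper's iterated-maximizer trick sidesteps this difficulty entirely and is the cleaner route.
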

\begin{proof}
	We have the existence of maximizers from Lemma \ref{lemma: max} and since $z_u(r)$ can only vary over a compact set, we also have the existence of rightmost maximizers.

	We shall now prove that $z_u$ is continuous. Since $z_u(r) \in I_r$, we have that $z_u$ is continuous at $s,t$. The upper semi-continuity of $e$ implies that $z_u$ is upper semi-continuous everywhere. By the metric composition law, for any $r' < r \in (s,t)$, $z_{(\bar{z}_u(r');q)}(r)$ maximizes the function in \eqref{eq: gd1}, hence $z_{(\bar{z}_u(r');q)}(r) \leq z_u(r)$. Therefore,
	\[z_u(r') = \liminf_{r \downarrow r'} z_{(\bar{z}_u(r');q)}(r) \leq \liminf_{r \downarrow r'} z_u(r),\]
	where the equality follows from the continuity of $z_{(\bar{z}_u(r');q)}(r)$ at its endpoints. Combining this with the upper semi-continuity of $z_u$, establishes the right-continuity of $z_u$. A symmetric argument gives left-continuity.
\end{proof}
Notice that Lemma \ref{lemma: gd1} is identical to \cite[Lemma 4.5]{DDV24}. Because of this identical form, we can directly apply \cite[Lemma 4.6, 4.7]{DDV24} to conclude the following properties.

\begin{lemma}[{\cite[Lemma 4.6]{DDV24}}] \label{lemma: gd2}
	For $e \in \cD$, and points $(p,q) = (x,s;y,t)$, $(p',q') = (x',s';y',t')$ with $x < x'$ and $y < y'$, we have
	\[e(p,q') + e(p',q) \leq e(p,q) + e(p',q'),\]
	whenever all the quantities in the above inequality are well-defined, i.e. $p \leq q, p \leq q', p' \leq q$ and $p'\leq q'$.  
\end{lemma}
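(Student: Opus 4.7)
The plan is to follow the proof of Lemma 4.6 of \cite{DDV24} essentially verbatim, since Lemma \ref{lemma: gd1} provides the crucial structural tool (continuous rightmost maximizers of the metric composition law) in exactly the form needed there. The strategy is a crossing argument: the rightmost maximizer curves corresponding to the ``long'' point pair $(p,q')$ and the ``short'' point pair $(p',q)$ must intersect, and at that intersection point we swap the two decompositions of these cross-distances to produce two legitimate decompositions of $e(p,q)$ and $e(p',q')$, whose sum is only reduced by the triangle inequality \eqref{eq: triangle}.

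Concretely, write $u_1 = (p;q') = (x,s;y',t')$ and $u_2 = (p';q) = (x',s';y,t)$. By Lemma \ref{lemma: gd1}, for each admissible time $r$ there is a rightmost maximizer $z_{u_1}(r)$ (resp.\ $z_{u_2}(r)$) of the composition law for $u_1$ (resp.\ $u_2$), continuous in $r$. The common domain of definition is the interval $[\max(s,s'),\min(t,t')]$, which is non-empty since the four assumed orderings force $\max(s,s')\le\min(t,t')$. The first step I would carry out is to show that the function $r\mapsto z_{u_2}(r)-z_{u_1}(r)$ has a zero $r_0$ in this interval. Using $x<x'$ and $y<y'$, the maximizer curve $(z_{u_1}(r),r)$ connects a point with $x$-coordinate $x$ to a point with $x$-coordinate $y'$, while $(z_{u_2}(r),r)$ connects $x'$ to $y$; after a short case analysis on the orderings of $s,s'$ and $t,t'$, one checks that on the common domain the curve $z_{u_1}$ starts weakly to the left of $z_{u_2}$ and ends weakly to the right, so continuity produces the required crossing $r_0$, and the geometric constraints $|z_0-x|\le r_0-s$, $|z_0-x'|\le r_0-s'$, $|y-z_0|\le t-r_0$, $|y'-z_0|\le t'-r_0$ are automatic at such a crossing since both $(z_0,r_0)$ witnesses the composition law for $u_1$ and for $u_2$.

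With $w=(z_0,r_0)$, the rightmost-maximizer identity gives
\[
e(p,q') = e(p,w) + e(w^+,q'), \qquad e(p',q) = e(p',w) + e(w^+,q),
\]
while the generalized reverse triangle inequality \eqref{eq: triangle}, valid since $w$ is admissible for both chains, yields
\[
e(p,q) \ge e(p,w) + e(w^+,q), \qquad e(p',q') \ge e(p',w) + e(w^+,q').
\]
Adding the latter two and comparing with the former two gives $e(p,q)+e(p',q') \ge e(p,q')+e(p',q)$, which is the desired inequality.

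The main obstacle is the crossing step: one must verify that some intersection $r_0$ genuinely exists in the common interval and that $w=(z_0,r_0)$ simultaneously satisfies all four ordering constraints relative to $p,p',q,q'$ so that every one of the four terms $e(p,w),\,e(p',w),\,e(w^+,q),\,e(w^+,q')$ makes sense on $\oR$. For the asymmetric time configurations ($s\ne s'$ or $t\ne t'$) this requires a small case analysis to locate an interval where the sign of $z_{u_2}-z_{u_1}$ changes and to check the admissibility of $w$ on both maximizing chains; once this is done, the rest of the argument is a mechanical rearrangement, and no step beyond what is already established in Lemma \ref{lemma: gd1} is needed.
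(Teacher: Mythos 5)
The paper does not supply its own proof of this statement; it asserts (just above the lemma) that, since Lemma \ref{lemma: gd1} matches \cite[Lemma 4.5]{DDV24}, one may ``directly apply'' \cite[Lemma 4.6]{DDV24}, and your crossing argument is the natural reading of what that reference does. The genuine gap is the crossing step that you yourself flag as ``the main obstacle'': for asymmetric time configurations the rightmost maximizer curves $z_{u_1}$ and $z_{u_2}$ need \emph{not} cross in the common time interval, and the stated inequality can actually fail. Take the minimal finite-rate metric $e = d$ with the Poisson LPP shape function $F(x,t) = \sqrt{t^2 - x^2}$, and the points $p=(0,0)$, $q=(1,3)$, $p'=(1,2)$, $q'=(2,3.1)$. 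Every hypothesis holds: $x<x'$, $y<y'$, and $p\leq q$, $p\leq q'$, $p'\leq q$, $p'\leq q'$ are all light-cone admissible. Since $F$ is strictly concave transversally, the unique $d$-geodesics are straight lines; the one from $p$ to $q'$ sits at $2r/3.1 > 1.29$ for all $r \in [2,3]$, while the one from $p'$ to $q$ sits at $1$, so there is no intersection in the common interval. Correspondingly
\[
e(p,q') + e(p',q) \;=\; \sqrt{5.61} + 1 \;\approx\; 3.369 \;>\; 3.287 \;\approx\; \sqrt{8} + \sqrt{0.21} \;=\; e(p,q) + e(p',q'),
\]
violating the lemma.

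Your argument (and the statement) is fine when $s=s'$ and $t=t'$: there $z_{u_1}(s) = x < x' = z_{u_2}(s)$ and $z_{u_1}(t) = y' > y = z_{u_2}(t)$, so the intermediate value theorem forces a crossing and the rest of your rearrangement is correct. But the ``short case analysis'' you invoke does not produce the required boundary sign conditions in the general asymmetric setting (the rightmost maximizer of the composition law for $(p,q')$ is constrained only by the two light cones through $p$ and $q'$, and nothing forces it to start left of $x'$ or end right of $y$), and the issue cannot be repaired by a cleverer argument because the inequality itself is false at this generality for LPP shape functions. You should check the exact hypotheses of \cite[Lemma 4.6]{DDV24} (it is likely restricted to $s=s'$, $t=t'$, or relies on algebraic features of the directed-landscape parabola $-\frac{(y-x)^2}{t-s}$, which does satisfy this inequality for all time configurations even without a geodesic crossing) rather than extrapolating the crossing argument to arbitrary quadruples.
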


\begin{lemma}[{\cite[Lemma 4.7]{DDV24}}] \label{lemma: gdf}
	The function $z_u$ defines a rightmost geodesic from $p$ to $q$.
\end{lemma}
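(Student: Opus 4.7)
My plan is to follow the strategy of \cite[Lemma 4.7]{DDV24}, which becomes applicable once Lemmas \ref{lemma: gd1} and \ref{lemma: gd2} supply the necessary framework. The verification splits into three steps.

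\textbf{Step 1 (Path property).} Set $\gamma := z_u$. Continuity is already supplied by Lemma \ref{lemma: gd1}, so it remains to establish the causal constraint $\abs{\gamma(r_2) - \gamma(r_1)} \leq r_2 - r_1$ for $s \leq r_1 \leq r_2 \leq t$. I would take any maximizer $w$ of $z \mapsto e(p, (z, r_1)) + e((z, r_1)^+, \bg(r_2))$, which exists by Lemma \ref{lemma: max}. Adding $e(\bg(r_2)^+, q)$ to both sides and invoking the reverse triangle inequality at $\bg(r_2)$ shows $w$ is also a maximizer of the full-interval problem at time $r_1$, hence $w \leq \gamma(r_1)$ by the rightmost-maximizer property of $z_u$. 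The causality constraint $w \geq \gamma(r_2) - (r_2 - r_1)$ on the domain of the restricted problem then yields $\gamma(r_2) - \gamma(r_1) \leq r_2 - r_1$. A symmetric argument applied to a maximizer of $z \mapsto e(\bg(r_1)^+, (z, r_2)) + e((z, r_2)^+, q)$ closes the other inequality.

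\textbf{Step 2 (Geodesic property).} Iterated reverse triangle inequality gives $\abs{\gamma}_{e, \cP} \leq e(p, q)$ for every partition $\cP$, so it suffices to prove $\abs{\gamma}_{e, \cP} = e(p, q)$ for every $\cP$. By induction on partition refinement, this reduces to establishing, for $r_1 < r_2$ in $[s, t]$,
\[ e(\bg(r_1)^+, q) = e(\bg(r_1)^+, \bg(r_2)) + e(\bg(r_2)^+, q), \]
equivalently that $\gamma(r_2)$ is a maximizer of $z \mapsto e(\bg(r_1)^+, (z, r_2)) + e((z, r_2)^+, q)$. The $\geq$ direction is the reverse triangle inequality; the matching $\leq$ will be extracted from the quadrangle inequality (Lemma \ref{lemma: gd2}), by contradicting a hypothetical strict gap between $\gamma(r_2)$ and the rightmost restricted maximizer.

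\textbf{Step 3 (Rightmost property).} For any competing geodesic $\gamma'$ from $p$ to $q$, the partition $\{s, r, t\}$ forces $e(p, (\gamma'(r), r)) + e((\gamma'(r), r)^+, q) = e(p, q)$, since otherwise $\abs{\gamma'}_{e, \{s, r, t\}} < e(p, q)$ would contradict $\abs{\gamma'}_e = e(p, q)$. This makes $\gamma'(r)$ a maximizer of the very function whose rightmost maximizer is $z_u(r) = \gamma(r)$, so $\gamma'(r) \leq \gamma(r)$.

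The principal obstacle is Step 2: extending the pointwise rightmost-maximizer property of $z_u$ from the full interval $[s, t]$ to the corresponding property over every sub-interval $[r_1, r_2]$. Atoms of $e$ introduce some technical clutter through the $(\cdot)^+$ device, but by Lemma \ref{lemma: ctble} they form a countable set and do not impair the core argument.
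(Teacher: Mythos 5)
The paper does not supply its own proof of this lemma: it observes that Lemma \ref{lemma: gd1} has precisely the form of \cite[Lemma 4.5]{DDV24} and then simply cites \cite[Lemmas 4.6, 4.7]{DDV24} as applying verbatim. So there is no internal argument to compare against, and your proposal is an attempt to reconstruct the cited proof. Your three-part architecture (causal Lipschitz bound, geodesic length, rightmost comparison) is the right decomposition. Step 3 is essentially complete and correct. Step 1 is sound in structure: you correctly use that a restricted maximizer is a full maximizer (by inserting the decomposition at $\bg(r_2)$ and the reverse triangle inequality), hence dominated by the rightmost one, and then squeeze against the causality constraint on the domain; the symmetric half goes through once one accepts the $(\cdot)^+$ form of the metric composition law with source $\bg(r_1)^+$, which follows from Lemma \ref{lemma: max} at non-atoms and a limit over the countable atom set otherwise, as you note.

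Step 2, which you yourself flag as the principal obstacle, is only gestured at, and this is where the real work of the proof is. Your reduction to showing that $\gamma(r_2)$ maximizes $z \mapsto e(\bg(r_1)^+, (z, r_2)) + e((z, r_2)^+, q)$ is correct (the two-endpoint version with $q$ suffices, since telescoping it gives the decomposition between any pair of interior partition points). But saying the matching $\leq$ "will be extracted from the quadrangle inequality by contradicting a hypothetical strict gap" is not a proof. Step 1 already gives that the rightmost restricted maximizer $w$ is a full maximizer, so $w \leq \gamma(r_2)$; the whole point is to exclude a strict gap, and the naive application of Lemma \ref{lemma: gd2} with sources $p$, $\bg(r_1)$ and targets $(w, r_2)$, $\bg(r_2)$ requires the spatial coordinate of $p$ to be strictly left of $\gamma(r_1)$, which is simply not guaranteed — the rightmost geodesic can initially move left. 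Chasing the identities $e(p,q) = e(p,\bg(r_1)) + e(\bg(r_1)^+, q) = e(p, (w,r_2)) + e((w,r_2)^+, q) = e(p, \bg(r_2)) + e(\bg(r_2)^+, q)$ around in circles does not by itself close the gap; one needs an additional idea (e.g.\ a more careful choice of quadrangle, or a monotonicity-of-geodesics argument exploiting that $\bg(r_1)$ lies on the rightmost geodesic from $p$ to $q$) that you have not supplied. As written, the proposal identifies the right tool but does not demonstrate that it applies.
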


This establishes our finite rate metrics as geodesic spaces. We shall see in the next section that the structure of the geodesics completely determines the metric.

\section{Networks and Metrics} \label{sec: ntwrk}

In this section, we develop the true rate function $I$ and understand the structure of finite rate metrics in $\cD$. A countable collection of closed paths in $\oR$, $\Gamma$ is a \textbf{network} if all paths in it are disjoint, i.e. for all $\gamma, \gamma' \in \Gamma$ with domains $[a,b], [a', b']$ and $r \in [a,b] \cap [a', b']$, we have $\gamma(r) \neq \gamma'(r)$. Let $\cP$ be a countable collection of disjoint intervals in an interval $S$. Define, $\cC_{S}$ to be the collection of all possible $\cP$. We define the mesh size of $\cP$, $m(\cP)$ as the supremum of the length of the intervals in $\cP$. The rate of a path $\gamma: [s,t] \to \bR^2$ is defined as 
\begin{equation}
	I(e, \gamma) := \inf_{\epsilon >0} \sup_{\cP \in \cC_{[s,t]}, m(\cP) \leq \epsilon} \sum_{[s_{i}, t_i] \in \cP} \Theta(e, (\bg(s_i), \bg(t_i))).
\end{equation}
We note that the supremum is non-increasing as $\epsilon$ decreases. The rate function can be similarly defined for curves with open/\,half-open intervals as domain. Finally, for $e \in \cD$, we define the rate function $I$ as
\begin{align} \label{eq: gdrf2}
	I(e) := \sup_{\Gamma} I(e, \Gamma),&& \mathrm{where} && I(e, \Gamma) := \sum_{\gamma \in \Gamma} I(e, \gamma),
\end{align} 
and set $I(e) = \infty$ for $e \notin \cD$.

We note that the supremum in \eqref{eq: gdrf2} can be restricted to finite networks. We shall prove the equivalence between $\Theta$ and $I$ in the Lemma \ref{lemma: equiv}. Before doing so, we make the following important observation.

\begin{lemma} \label{lemma: ugd}
	Let $e \in \cD$, $u =(x,s;y,t) \in \oR$ and $\gamma$ be an $e-$geodesic with endpoints $(x,s)$ and $(y,t)$. Then
	\[
		\Theta(e,u) \leq I(e, \gamma).
	\] 
\end{lemma}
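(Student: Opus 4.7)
The plan is to discretize the geodesic along a consecutive partition of $[s,t]$, recognize the resulting sum of point-wise rates as a Jensen-type average, and then let the mesh shrink. Fix a partition $\cP = \bc{s = s_0 < s_1 < \cdots < s_k = t}$ of $[s,t]$ and set $u_i := (\bg(s_{i-1}), \bg(s_i))$, $\lambda_i := (s_i - s_{i-1})/(t-s)$, $m_i := (\gamma(s_i) - \gamma(s_{i-1}))/(s_i - s_{i-1})$ and $a_i := e(u_i)/(s_i - s_{i-1})$; write $m, a$ for the analogous quantities attached to $u$ itself. Then $\sum_i \lambda_i = 1$, and telescoping gives $\sum_i \lambda_i m_i = (\gamma(t) - \gamma(s))/(t-s) = m$, which is the first convexity identity I will need.

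The first task is to show $\sum_i e(u_i) \geq e(u)$, equivalently $\sum_i \lambda_i a_i \geq a$. Iterating the triangle inequality \eqref{eq: triangle} along the partition yields $e(u) \geq \abs{\gamma}_{e, \cP}$, while specialising \eqref{eq: triangle} to $r = p$ gives $e(p^+, q) \leq e(p, q)$, from which the definition of $\abs{\cdot}_{e, \cP}$ produces $\abs{\gamma}_{e, \cP} \leq \sum_i e(u_i)$. Since $\gamma$ is an $e$-geodesic, $\abs{\gamma}_e = \inf_\cP \abs{\gamma}_{e, \cP} = e(u)$, and combined with $\abs{\gamma}_{e, \cP} \leq e(u)$ this forces $\abs{\gamma}_{e, \cP} = e(u)$ for every partition, hence the desired inequality.

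The convexity step finishes the proof. By joint convexity of $J$ (Property \ref{A2}(i)),
\begin{equation*}
	J\left(m,\, \sum_i \lambda_i a_i\right) \leq \sum_i \lambda_i J(m_i, a_i),
\end{equation*}
and by Property \ref{A2}(ii) the slice $J_m$ is non-decreasing on $\bR$ (zero on $(-\infty, F(m,1)]$ and strictly increasing thereafter), so $J(m, a) \leq J\left(m, \sum_i \lambda_i a_i\right)$. Multiplying the resulting chain by $(t-s)$ converts it into $\Theta(e, u) \leq \sum_i \Theta(e, u_i)$. Since this bound holds for every consecutive partition of $[s,t]$, in particular for partitions of arbitrarily small mesh, passing to the supremum over $\cP \in \cC_{[s,t]}$ with $m(\cP) \leq \epsilon$ and then the infimum over $\epsilon > 0$ delivers $I(e, \gamma) \geq \Theta(e, u)$. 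I do not anticipate a serious obstacle here: potential atoms at partition points are absorbed by $e(p^+, q) \leq e(p, q)$, and values of $a$ lying in the flat region of $J_m$ are handled uniformly by monotonicity, so the argument reduces cleanly to Jensen's inequality applied to $J$.
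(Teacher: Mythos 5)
Your proof takes a genuinely different route from the paper's. The paper argues probabilistically: for a disjoint collection $\cP$ with $\sum_i e(u_i) \geq e(u) - \delta$, it introduces the events $A_i = \{T_n(u_i) \geq \abs{\gamma_{[a_i,b_i]}}_e\}$, sandwiches $\P{\bigcap_i A_i}$ between $\exp(-n\Theta(e-\delta,u))$ (from $\bigcap_i A_i \subset \{T_n(u) \geq e(u)-\delta\}$ and Property \ref{A2}) and $\prod_i \P{A_i}$ (from independence of $T_n$ over disjoint regions), and compares the exponential rates. You replace this with a deterministic argument --- Jensen's inequality for the jointly convex $J$, using the telescoping $\sum_i \lambda_i m_i = m$, the inequality $\sum_i \lambda_i a_i \geq a$, and the monotonicity of $J_m$ --- which is cleaner, never invokes independence, and makes transparent that the lemma is just the convexity and monotonicity packaged in Property \ref{A2}.

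There is, however, a gap in the last step. The quantity $I(e,\gamma)$ is a supremum over $\cP \in \cC_{[s,t]}$, where $\cC_{[s,t]}$ consists of countable collections of \emph{disjoint} closed intervals. A consecutive partition $s = s_0 < \cdots < s_k = t$ yields intervals $[s_{i-1},s_i]$ and $[s_i,s_{i+1}]$ sharing the endpoint $s_i$; the family is therefore not pairwise disjoint, hence not an element of $\cC_{[s,t]}$, and the corresponding $u_i$ are not $\triangledown$-disjoint. Your bound $\sum_i \Theta(e,u_i) \geq \Theta(e,u)$ consequently does not directly lower-bound the supremum defining $I(e,\gamma)$. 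To repair this, shrink each interval to $[s_{i-1}+\delta, s_i]$. Since $\Theta(e) < \infty$ forces the atom set to be countable (Lemma \ref{lemma: ctble}), the $s_i$ can be chosen with $\bg(s_i)$ non-atomic; then the partition-invariance $\abs{\gamma}_{e,\cP} = e(u)$ gives $e(\bg(s_{i-1}),\bg(s_i)) = e(\bg(s_{i-1}),\bg(s_{i-1}+\delta)) + e(\bg(s_{i-1}+\delta)^+,\bg(s_i))$, and together with upper semi-continuity and $e(\bg(s_{i-1}+\delta),\bg(s_i)) \geq e(\bg(s_{i-1}+\delta)^+,\bg(s_i))$ this yields $e(\bg(s_{i-1}+\delta),\bg(s_i)) \to e(\bg(s_{i-1}),\bg(s_i))$ as $\delta \downarrow 0$. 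Joint continuity of $J$ then transfers the Jensen bound to the shrunken collection with arbitrarily small loss, and the lemma follows. Note that your remark that atoms ``are absorbed by $e(p^+,q) \leq e(p,q)$'' addresses the comparison $\abs{\gamma}_{e,\cP} \leq \sum_i e(u_i)$, not this disjointness issue, which is the actual sticking point in passing to $I(e,\gamma)$.
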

\begin{proof}
	Let $\epsilon > 0$. Since $\gamma$ is the geodesic corresponding to $u$, we can pick a finite collection $\cP \in \cC_{[s,t]}$ with $m(\cP) \leq \epsilon$, such that
	\begin{equation} \label{eq: ugd1}
		\sum_{[a_i,b_i] \in \cP} e((\bg(a_i), \bg(b_i))) \geq e(u) - \delta,
	\end{equation}	
	for arbitrarily small $\delta >0$. Let $\eta >0$, we shall show that for $\delta$ small enough
	\begin{equation} \label{eq: ugd2}
		\sum_i \Theta(e, (\bg(a_i), \bg(b_i))) \geq \Theta(e,u) - \eta.
	\end{equation}
	Let $u_i = (\bg(a_i), \bg(b_i))$ and define the events 
	\[
		A_i = \bc{T_n(u_i) \geq \abs{\gamma_{[a_i, b_i]}}}.
	\] 
	Following \eqref{eq: ugd1}, we have that
	\[
		\bigcap_i A_i \subset A:=\bc{T_n(u) \geq e(u) -\delta}.
	\] 
	Applying Property \ref{A2}, we have that
	\[
		\P{\bigcap_i A_i} \leq \P{A} \leq \exp \br{-n \Theta(e - \delta,u)} \leq \exp \br{-n (\Theta(e,u) - \eta)},
 	\]
	for $\delta$ small enough, as $J$ is continuous. At the same time, by the independence property of $T_n$ 
	\[
		\P{\bigcap_i A_i} = \prod_i \P{A_i} \geq \exp \br{-n\sum_i(\Theta(e,u_i) +o(1))},
	\] 
	where the inequality follows from Property \ref{A2} for $n$ large enough as $u_i$ are only finitely many. Taking the logarithm and dividing by $n$ in the above equations, we have proved that \eqref{eq: ugd2} holds. This shows that
	\[
		I(e,\gamma) \geq \Theta(e,u) - \eta, \; \forall \eta >0.
	\]
\end{proof} 

The arguments in the proof of Lemma \ref{lemma: ugd} are fairly general and lead to the following corollary.

\begin{corollary} \label{cor: path-approx}
	Let $e \in \cD$ and $u = (p;q) \in \oR$. Let $\epsilon >0$ and $\bc{u_i = (p_i;q_i) \in \oR}_{i=1}^n$ be a finite collection of points such that $p_1 \geq p, p_{i+1} \geq q_i$, $q_n \leq q$ and
	\[
		\sum_i e(u_i) \geq e(u) - \epsilon.
	\]
	Then,
	\[
		\liminf_{\epsilon \downarrow 0}\sum_i \Theta(e, u_i) \geq \Theta(e, u).
	\]
\end{corollary}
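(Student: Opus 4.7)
The plan is to mirror the proof of Lemma~\ref{lemma: ugd} with the collection $\{u_i\}_{i=1}^n$ playing the role of the geodesic partition used there. First, I would establish the deterministic chain inequality
\[
T_n(u)\;\geq\;\sum_{i=1}^n T_n(u_i),
\]
by iterating the reverse triangle inequality \eqref{eq: triangle} (which $T_n \in \cE$ satisfies) together with the monotonicity of $T_n$ that comes from its max-over-paths definition: apply the reverse triangle inequality at $q_1,q_2,\ldots$ in sequence and discard the trailing $T_n(q_n^+;q)\geq 0$. The delicate case is $p_{i+1}=q_i$, where the $^+$-convention \eqref{def: plus} does not by itself give $T_n(q_i^+;q)\geq T_n(p_{i+1};q)$. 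To handle it I would perturb $p_{i+1}$ infinitesimally into the interior of $\vee_{q_i}$ (passing to a subcollection if needed so that a strict chain $q_i<p_{i+1}'\leq q_{i+1}$ holds). By upper semi-continuity of $e$ and joint continuity of $J$, this perturbation costs at most $o(1)$ in each of $\sum_i e(u_i)$ and $\sum_i \Theta(e,u_i)$, an amount that can be absorbed into $\epsilon$ at the end. The same perturbation upgrades the chain condition to strict pairwise $\triangledown$-disjointness of the $u_i$, which is needed for independence.

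With the chain inequality in hand, define $A_i := \{T_n(u_i) \geq e(u_i)\}$. Pairwise $\triangledown$-disjointness gives independence of $(A_i)$, and the chain inequality together with the hypothesis $\sum_i e(u_i)\geq e(u)-\epsilon$ yields
\[
\bigcap_i A_i \;\subseteq\; \{T_n(u)\geq e(u)-\epsilon\}.
\]
Exactly as in Lemma~\ref{lemma: ugd}, Property~\ref{A2} gives the upper bound $\P{T_n(u)\geq e(u)-\epsilon}\leq \exp\bigl(-n\,\Theta(e,u)+n\,\eta(\epsilon)\bigr)$ with $\eta(\epsilon)\downarrow 0$ as $\epsilon\downarrow 0$ by continuity of $J$, while the limit-equals-supremum form of Property~\ref{A2} combined with independence of the $A_i$'s gives the lower bound
\[
\P{\bigcap_i A_i}=\prod_i \P{A_i}\;\geq\;\exp\left(-n\sum_i\Theta(e,u_i)-n\cdot o_n(1)\right)
\]
as $n\to\infty$, where the $o_n(1)$ is uniform over the finite collection.

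Taking logs, dividing by $n$, and letting $n\to\infty$ produces $\sum_i\Theta(e,u_i)\geq\Theta(e,u)-\eta(\epsilon)$. Sending $\epsilon\downarrow 0$ then yields the desired $\liminf$ bound. The only step that is genuinely new relative to Lemma~\ref{lemma: ugd} is the coincidence-point handling in the first paragraph, together with the small-perturbation accounting needed to promote $p_{i+1}\geq q_i$ to strict $\triangledown$-disjointness; the remainder of the argument is a direct replay of the large-deviation comparison already developed for Lemma~\ref{lemma: ugd}, and this is where I expect the main attention to detail, rather than any genuinely new idea.
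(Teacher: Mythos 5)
Your overall strategy — replay the probabilistic comparison argument of Lemma~\ref{lemma: ugd} with the $u_i$ playing the role of the geodesic pieces — is exactly what the paper intends here (the paper gives no explicit proof and simply remarks that the arguments of Lemma~\ref{lemma: ugd} are general enough). However, the perturbation step that you add to handle $p_{i+1}=q_i$ does not work as stated, and the claim that it ``costs at most $o(1)$'' is the gap.

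The problem is the direction of upper semi-continuity. If you replace $p_{i+1}$ by some $p'_{i+1}$ strictly inside $\vee_{q_i}$, upper semi-continuity only gives $\limsup_{p'_{i+1}\to p_{i+1}} e(p'_{i+1},q_{i+1}) \le e(p_{i+1},q_{i+1})$; it gives you no lower bound. What you can hope for is that the limsup is achieved, i.e.\ that $e(p_{i+1}^+,q_{i+1})=e(p_{i+1},q_{i+1})$, and by the argument inside Lemma~\ref{lemma: max} this holds precisely when $p_{i+1}$ is \emph{not} an atom. If $p_{i+1}=q_i$ is an atom of weight $a=e(q_i,q_i)>0$, then applying \eqref{eq: metcomp} at $r=s_{i+1}$ gives $e(p_{i+1}^+,q_{i+1})=e(p_{i+1},q_{i+1})-a$, so the perturbation loses a fixed amount $a$ in $\sum_i e(u_i)$ that does not vanish with the perturbation size. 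You then only control $\P{T_n(u)\ge e(u)-\epsilon-\sum_i a_i}$, and the argument terminates with $\liminf\sum_i\Theta(e,u_i)\ge\Theta(e-\sum_i a_i,u)$, which is strictly weaker than $\Theta(e,u)$ when $J$ is strictly increasing there. Nothing in the hypotheses rules out the coincidence points being atoms, so this is a genuine hole. (Passing to a subcollection as you mention is worse, since dropping a $u_i$ removes $e(u_i)$ entirely from the budget.)

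The issue disappears if one avoids the probabilistic detour altogether and argues directly from convexity: fill in the time gaps of $[s,t]$ not covered by the $u_i$ with dummy intervals $g_j$ weighted by $d$, so that $\Theta(d,g_j)=0$. Then $\sum_i\Theta(e,u_i)=\sum_i J(m_i,x_i)\Delta_i+\sum_j J(m^g_j,F(m^g_j,1))\Delta^g_j$, and joint convexity of $J$ together with the fact that the time- and space-weighted averages of the slopes reproduce the slope $m$ of $u$ yields $\sum_i\Theta(e,u_i)\ge J\bigl(m,(\sum_i e(u_i))/\Delta\bigr)\Delta\ge\Theta(e-\epsilon,u)$, and continuity of $J$ finishes. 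This handles coincidence points and atoms uniformly. If you prefer to stay with the probabilistic route, the correct patch is to leave $e$ alone and instead control the dependence in $T_n$: the events $A_i$ share only the weight at the lattice vertex $nq_i$, and conditioning on these finitely many weights (which are $O(\log n)$ w.h.p.\ under \eqref{ass: mom}, hence $o(1)$ after division by $n$) restores both independence and the chain inequality up to an $o(1)$ error.
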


\begin{lemma} \label{lemma: equiv}
	For any $e \in \cD$, we have that $\Theta(e) = I(e)$.
\end{lemma}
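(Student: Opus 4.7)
The plan is to establish the two inequalities $\Theta(e) \leq I(e)$ and $I(e) \leq \Theta(e)$ separately. In both directions I can restrict attention to finite collections: for $\Theta$ by truncating the countable disjoint family (all summands are non-negative), and for $I$ by the remark in the paper that finite networks suffice.

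For $\Theta(e) \leq I(e)$, I start with a finite family $u_1,\dots,u_N$ of pairwise disjoint points in $\oR$ with $u_i = (p_i;q_i)$ and $p_i \neq q_i$. By Lemma \ref{lemma: gdf}, each $u_i$ admits an $e$-geodesic $\gamma_i$ from $p_i$ to $q_i$. The collection $\Gamma := \{\gamma_i\}_{i=1}^N$ is a network: if $\gamma_i(r) = \gamma_j(r) = z$ for some common time $r$ and $i\neq j$, then $(z,r) \in \bR^2$ would satisfy both $p_i \leq (z,r) \leq q_i$ and $p_j \leq (z,r) \leq q_j$, contradicting $u_i \triangledown u_j$. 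Lemma \ref{lemma: ugd} then gives $\Theta(e,u_i) \leq I(e,\gamma_i)$ for each $i$, and summation yields $\sum_i \Theta(e,u_i) \leq I(e,\Gamma) \leq I(e)$.

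For the reverse direction $I(e) \leq \Theta(e)$, I fix a finite network $\Gamma = \{\gamma_1,\dots,\gamma_N\}$ with compact time domains $[a_j,b_j]$. The network condition $\gamma_j(r) \neq \gamma_k(r)$, combined with continuity and compactness of $[a_j,b_j] \cap [a_k,b_k]$, yields a uniform separation $\eta > 0$ with $|\gamma_j(r) - \gamma_k(r)| \geq \eta$ whenever $j \neq k$ and $r$ lies in the common time domain. For any partitions $\cP_j$ of $[a_j,b_j]$ of mesh less than $\eta/4$, each interval $[s_i,t_i] \in \cP_j$ defines the endpoint pair $u_{j,i} = (\bg_j(s_i);\bg_j(t_i))$, whose associated order-diamond $\{r \in \bR^2 : \bg_j(s_i) \leq r \leq \bg_j(t_i)\}$ is contained in a spatial tube of width at most $\eta/2$ around $\gamma_j$. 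These diamonds are pairwise disjoint across the full collection $\{u_{j,i}\}_{j,i}$: on the same path because the partition intervals are time-disjoint, and across different paths because the $\eta/2$-tubes themselves do not meet. Hence $\sum_{j,i} \Theta(e, u_{j,i}) \leq \Theta(e)$, and passing to the supremum over such fine partitions and summing over $j$ gives $I(e,\Gamma) \leq \Theta(e)$, hence $I(e) \leq \Theta(e)$.

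The main obstacle is verifying the disjointness step in the second direction, since it translates a purely topological (pointwise) separation of finitely many paths into a uniform geometric separation of the attached space-time diamonds. The argument depends on the compactness of the path domains to upgrade pointwise separation to uniform separation, together with the elementary observation that an order-diamond of time-width $\ell$ has spatial width at most $\ell$, so a mesh comparable to $\eta$ suffices.
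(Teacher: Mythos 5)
Your proof is correct and follows essentially the same two-sided argument as the paper: in one direction you pass from a finite disjoint family of points in $\oR$ to the network of their $e$-geodesics (via Lemma \ref{lemma: gdf}) and invoke Lemma \ref{lemma: ugd}; in the other you use the uniform separation of a finite network to choose partitions fine enough that the resulting endpoint pairs across different paths are $\triangledown$-disjoint. The only cosmetic differences are that the paper phrases the separation in terms of $d^1$-distance between curves rather than the tube picture you draw, and the paper invokes the \emph{rightmost} geodesic from Lemma \ref{lemma: gdf} whereas any $e$-geodesic suffices for the disjointness argument, as you observe.
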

\begin{proof}
	Assume that we have a finite disjoint network $\bc{\gamma_i}$. Since $\gamma_i$ is a disjoint collection of closed curve, there exists $\epsilon>0$ such that for $i \neq j$, 
	\begin{equation} \label{eq: equiv1}
		d^1(\gamma_i, \gamma_j) > \epsilon.
	\end{equation}
	Following the definition of $I$, the supremum in $I(e,\gamma_i)$ over countably many disjoint intervals, each with length less than $\epsilon$, is greater than or equal to $I(e, \gamma_i)$. Each interval corresponds to a point in $\oR$ and \eqref{eq: equiv1} ensures that the collection of all points from each $\gamma_i$ is disjoint. Hence, for any finite network $\Gamma$ we have that
	\[\Theta(e) \geq I(e, \Gamma) .\]
	Thus, $\Theta(e) \geq I(e)$.

	Given any countable collection of disjoint points $U = \bc{u_i = (p_i, q_i)}$, by Lemma \ref{lemma: gdf} there is a rightmost geodesic $\gamma_i$ with endpoints $p_i$ and $q_i$. Since $u_i$ are disjoint, so are their geodesics. Thus, $\Gamma = \bc{\gamma_i \: |\: u_i \in U}$ is a network and by Lemma \ref{lemma: ugd}
	\[I(e, \Gamma) \geq \sum_{i} \Theta(e,u_i).\] 
	Hence, $I(e) \geq \Theta(e)$. 
\end{proof}
The proof of Lemma \ref{lemma: equiv} shows that for $e \in \cD$, we can take the supremum in \eqref{eq: gdrf2} over collections of disjoint geodesics as well. We now show that $I$ is lower semi-continuous, hence a valid candidate for a rate function.

\begin{corollary} \label{cor: lsc}
	The function $e \longmapsto I(e)$ is lower semi-continuous for $e \in \cE$.
\end{corollary}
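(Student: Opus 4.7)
The plan is to leverage Lemma \ref{lemma: equiv}, which identifies $I$ with $\Theta$ on $\cD$ (and with $\infty$ off $\cD$), together with the lower semi-continuity of $\Theta$ (Lemma \ref{lemma: lsc}) and the topological facts we have already collected about $\cD$. Since $I(f) \geq \Theta(f)$ for every $f \in \cE$, Lemma \ref{lemma: lsc} gives for free the baseline estimate
\[
	\liminf_{n} I(e_n) \geq \liminf_{n} \Theta(e_n) \geq \Theta(e)
\]
whenever $e_n \to e$ in $\tau_h$. So the task reduces to upgrading $\Theta(e)$ to $I(e)$ on the right hand side, which is only an issue when $e \notin \cD$, in which case $I(e) = \infty$ and we must show $I(e_n) \to \infty$.

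If $\Theta(e) = \infty$, the baseline estimate already concludes. So assume $\Theta(e) < \infty$ and $e \notin \cD$; then $e$ must fail either $d$-dominance or the generalized metric composition law in Definition \ref{def: D}(iii). I would split into these two cases and show that either defect is inherited by $e_n$ for all $n$ sufficiently large, forcing $I(e_n) = \infty$.

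In the first case, pick a witness point $u \in \oR$ with $e(u) < d(u)$. By the hypo-convergence property H(\ref{hypo2}), choose $u_n \to u$ with $e_n(u_n) \to e(u)$. Since $d(\cdot) = F(\cdot - \cdot)$ is continuous on $\oR$ by Property \ref{A1}, one has $d(u_n) \to d(u)$, so the strict inequality $e_n(u_n) < d(u_n)$ eventually holds, violating $d$-dominance for $e_n$. In the second case, Corollary \ref{corollary: mcl} directly supplies a $\tau_h$-neighborhood of $e$ in which the metric composition law is violated, so again $e_n \notin \cD$ eventually.

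The mildly delicate step is the second case — recognizing that it is Corollary \ref{corollary: mcl} rather than the bare upper semi-continuity of $\Theta$ that handles the failure of (iii), since MCL is a conjunction of equalities whose failure is naturally open but whose preservation under $\tau_h$-limits was exactly the content of that corollary. Once both cases are dispatched, combined with the case $e \in \cD$ where $\Theta(e) = I(e)$ directly, the proof is complete.
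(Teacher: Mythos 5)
Your proposal is correct and follows essentially the same route as the paper: both reduce to the lower semi-continuity of $\Theta$ via $I \geq \Theta$, together with the fact that $e \notin \cD$ forces $\liminf_n I(e_n) = \infty$. Where the paper dispatches the $e \notin \cD$ case in one line by citing Lemma~\ref{lemma: Dm cpt} (closedness of $\cD_m$), you unwind that closedness by hand into three sub-cases ($\Theta(e) = \infty$, failure of $d$-dominance, failure of MCL); this is logically the same argument, only more explicit, and each of your sub-steps (H(\ref{hypo2}) plus continuity of $d$ for the $d$-dominance defect, Corollary~\ref{corollary: mcl} for the MCL defect) is sound.
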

\begin{proof}
Let $(e_n) \subset \cE$, such that $e_n \to e$ for some $e \in \cD$. By our definition of $I$ and Lemma \ref{lemma: equiv}, we have that $I \geq \Theta$ on $\cE$. By Lemma \ref{lemma: lsc}, we have that $\Theta$ is lower semi-continuous. Thus, we have that $\liminf_n I(e_n) \geq \liminf_n \Theta(e_n) \geq \Theta(e) = I(e)$.

Now assume that the limit point $e \notin \cD$, then by Lemma \ref{lemma: Dm cpt}, we must have that $\Theta(e_n)$ is unbounded or $e_n$ is not in $\cD$ for $n$ large enough. In either case, we have that $\liminf_n I(e_n) = \infty$, proving our claim.
\end{proof}

Our next goal is to show that for $e \in \cD$, the supremum in \eqref{eq: gdrf2} is achieved by a network of paths. Such networks will enable many arguments that we will be requiring later, and we shall call these special networks \textbf{$e-$complete}. 

\begin{lemma} \label{lemma: breakgd}
	Let $e \in \cD$. Given an $e-$geodesic path $\gamma: [s,t] \to \bR^2$, for any $x \in [s,t]$, we have that
	\[I(e, \gamma) = I(e, \gamma|_{[s,x]}) + I(e, \gamma|_{(x,t]}).\]
\end{lemma}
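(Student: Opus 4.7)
The plan is to prove the equality by establishing both inequalities separately. Throughout, let $f_\gamma(\epsilon) := \sup_{\cP \in \cC_{[s,t]}, m(\cP) \leq \epsilon} \sum_i \Theta(e, (\bg(s_i), \bg(t_i)))$, so that $I(e, \gamma) = \inf_{\epsilon > 0} f_\gamma(\epsilon)$; the same notation is used on the sub-intervals $[s,x]$ and $(x,t]$. The edge cases $x \in \{s, t\}$ are trivial, so assume $s < x < t$.

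For the superadditivity direction $I(e, \gamma) \geq I(e, \gamma|_{[s,x]}) + I(e, \gamma|_{(x,t]})$, fix $\epsilon > 0$ and take arbitrary $\cP_1 \in \cC_{[s,x]}$ and $\cP_2 \in \cC_{(x,t]}$ with mesh $\leq \epsilon$. Every interval of $\cP_1$ has right endpoint $\leq x$ and every interval of $\cP_2$ has left endpoint $> x$, so $\cP_1 \cup \cP_2$ is pairwise disjoint, lies in $\cC_{[s,t]}$, and still has mesh $\leq \epsilon$. Since $\sum_{\cP_1 \cup \cP_2} \Theta = \sum_{\cP_1} \Theta + \sum_{\cP_2} \Theta$, taking suprema over $\cP_1, \cP_2$ and then sending $\epsilon \to 0$ yields the bound.

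For subadditivity, fix $\epsilon > 0$ and $\cP \in \cC_{[s,t]}$ with $m(\cP) \leq \epsilon$. By disjointness, at most one interval $[a, b] \in \cP$ satisfies $a \leq x \leq b$; the other intervals sort naturally into $\cP_1 \in \cC_{[s,x]}$ or $\cP_2 \in \cC_{(x,t]}$ according to their location. To handle the crossing interval, I choose $x_1 \in [a, x)$ and $x_2 \in (x, b]$ close to $x$ with $\bg(x_1)$ and $\bg(x_2)$ not atoms of $e$, which is possible because atoms are countable by Lemma \ref{lemma: ctble} and $\gamma$ is continuous. Applying Lemma \ref{lemma: max} to the geodesic $\gamma|_{[a,b]}$ with the partition $\{a, x_1, x_2, b\}$, together with the non-atom identity $e(p^+, q) = e(p, q)$, gives the exact length decomposition
\[
e(\bg(a), \bg(b)) = e(u_1) + e(u_m) + e(u_2),
\]
with $u_1 := (\bg(a), \bg(x_1))$, $u_m := (\bg(x_1), \bg(x_2))$, $u_2 := (\bg(x_2), \bg(b))$. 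Whenever $\bg(x)$ is also not an atom, upper semi-continuity of $e$ forces $e(u_m) \to 0$ as $(x_1, x_2) \to (x, x)$, so Corollary \ref{cor: path-approx} applied to the two-piece decomposition $(u_1, u_2)$ of $u := (\bg(a), \bg(b))$ gives $\Theta(e, u_1) + \Theta(e, u_2) \geq \Theta(e, u) - \delta$ for any $\delta > 0$ once $x_1, x_2$ are sufficiently close to $x$. Placing $[a, x_1]$ into $\cP_1$ and $[x_2, b]$ into $\cP_2$ then yields
\[
\sum_{\cP} \Theta \leq f_{\gamma|_{[s,x]}}(\epsilon) + f_{\gamma|_{(x,t]}}(\epsilon) + \delta,
\]
and taking the supremum over $\cP$, then $\delta \to 0$ and $\epsilon \to 0$ proves subadditivity in the non-atom case.

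The main obstacle is the case when $\bg(x)$ itself is an atom of $e$, since then $e(u_m)$ may not vanish and the above convergence breaks down. The fix is to take $x_1 = x$, so that $[a, x]$ enters $\cP_1$ and its contribution $\Theta(e, (\bg(a), \bg(x)))$ absorbs the atomic mass on the left. The geodesic identity then reads $e(\bg(a), \bg(b)) = e(\bg(a), \bg(x)) + e(\bg(x)^+, \bg(b))$; combined with the fact that the geodesic direction at $\bg(x)^+$ realizes the supremum defining $e(\bg(x)^+, \bg(b))$ (via Lemma \ref{lemma: atoms}), one shows $e(\bg(x_2), \bg(b)) \to e(\bg(x)^+, \bg(b))$ as $x_2 \downarrow x$ along the geodesic. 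Corollary \ref{cor: path-approx} then again yields $\Theta(e, (\bg(a), \bg(x))) + \Theta(e, (\bg(x_2), \bg(b))) \geq \Theta(e, u) - \delta$, and the remainder of the argument goes through unchanged.
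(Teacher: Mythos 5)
Your proof follows the same overall strategy as the paper's: the superadditivity direction is the trivial union of partitions, and the subadditivity direction is handled by splitting the one interval $[a,b] \in \cP$ that straddles $x$, using the geodesic identity to control the lost contribution, and invoking Corollary~\ref{cor: path-approx} to turn the $e$-level estimate into a $\Theta$-level estimate. The paper does this uniformly by setting $I_1 = [a,x]$, $I_2 = [x+\delta,b]$ regardless of whether $\bg(x)$ is an atom, whereas you introduce a case split on the atomicity of $\bg(x)$; the non-atom branch (split at non-atomic $x_1 < x < x_2$, show $e(\bg(x_1),\bg(x_2)) \to 0$ by upper semi-continuity, then apply Corollary~\ref{cor: path-approx}) is correct and clean.

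The one weak spot is the atom branch. You justify $e(\bg(x_2), \bg(b)) \to e(\bg(x)^+, \bg(b))$ by asserting that ``the geodesic direction at $\bg(x)^+$ realizes the supremum defining $e(\bg(x)^+, \bg(b))$ (via Lemma~\ref{lemma: atoms}).'' That lemma says the supremum in the definition of $e(p^+,q)$ is attained along the \emph{boundary rays} of the cone $\vee_p$, which is generically not the direction in which $\gamma$ leaves $\bg(x)$, so it does not give you what you want. The statement you need is still true, but for a different reason: apply the generalized triangle inequality \eqref{eq: triangle} with $r=p=\bg(x)$ to get $e(\bg(x)^+,\bg(x_2)) \leq e(\bg(x),\bg(x_2)) - e(\bg(x),\bg(x))$, and then upper semi-continuity of $e$ at $(\bg(x),\bg(x))$ forces $e(\bg(x)^+,\bg(x_2)) \to 0$ as $x_2 \downarrow x$. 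Combining this with the geodesic decomposition $e(\bg(x)^+,\bg(b)) = e(\bg(x)^+,\bg(x_2)) + e(\bg(x_2)^+,\bg(b)) \leq e(\bg(x)^+,\bg(x_2)) + e(\bg(x_2),\bg(b))$ gives exactly the needed lower bound on $e(\bg(x_2),\bg(b))$. Note this argument works whether or not $\bg(x)$ is an atom, so with it your case distinction becomes unnecessary and you recover the paper's uniform treatment.
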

\begin{proof}
	Let $\cP_1 \in \cC_{[s,x]}, \cP_2 \in \cC_{(x,t]}$, then their union $\cP^1 \cup \cP^2 \in \cC_{[s,t]}$. Hence, we have that
	\[I(e, \gamma) \geq I(e, \gamma|_{[s,x]}) + I(e, \gamma|_{(x,t]}).\]

	Thus, all we need to show is that the reverse inequality also holds. Let $\epsilon >0$ and $\cP \in \cC_{[s,t]}$. It suffices to show that there exists a collection $\cP'$, such that any $I \in \cP'$ is either a subset of $[s,x]$ or $(x,t]$ with the property that
	\[\sum_{[s_{i}, t_i] \in \cP'} \Theta(e, (\bg(s_i), \bg(t_i))) \geq \sum_{[s_{i}, t_i] \in \cP} \Theta(e, (\bg(s_i), \bg(t_i))) - \epsilon.\]
	Let $I =[a,b] \in \cP$ such that $x \in I$. If no such $I$ exists, then $\cP' =\cP$ is our desired collection, so we assume otherwise. Consider the sets, $I_1 = [a,x]$ and $I_2 = [x+\delta, b]$. We claim that the collection
	\[\cP' = \cP \cup \bc{I_1, I_2} \setminus \bc{I}\]
	satisfies the claim for $\delta$ small enough. The only inequality to check is that
	\begin{equation} \label{eq: breakgd}
		\Theta(e, (\bg(a), \bg(b))) - \epsilon \leq \Theta(e, (\bg(a), \bg(x))) + \Theta(e, (\bg(x+ \delta), \bg(b))).
	\end{equation}
	Let $\eta >0$. Since $\gamma$ is a geodesic, we can choose $\delta >0$ such that
	\[
		e((\bg(a), \bg(b))) -\eta \leq e((\bg(a), \bg(x))) + e((\bg(x+ \delta), \bg(b))).
	\]
	The claim now follows from Corollary \ref{cor: path-approx}.
\end{proof}

\begin{lemma} \label{lemma: ntwrk-upgrd}
	Let $e$ be such that $I(e) \leq \infty$ and $\Gamma$ be a finite network, $\Pi$ be a finite network of geodesics. For any $\epsilon>0$, there is a finite network $\Gamma' \supset \Gamma$ such that
	\[
		I(e, \Gamma') \geq I(e, \Pi) - \epsilon.
	\]
	Furthermore, if $\Gamma$ is a geodesic network, then so is $\Gamma'$.
\end{lemma}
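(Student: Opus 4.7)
The plan is to construct $\Gamma'$ by adjoining to $\Gamma$ carefully chosen sub-paths of each geodesic $\pi \in \Pi$, obtained by excising short time windows where $\pi$ coincides with some path of $\Gamma$. The additivity of the $I$-rate along a geodesic provided by Lemma \ref{lemma: breakgd} then guarantees that these excisions can be made to cost an arbitrarily small amount of total rate.

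Fix $\pi \in \Pi$ with domain $[s,t]$. For each $\gamma \in \Gamma$, the set $B_{\pi,\gamma} := \bc{r \in \mathrm{dom}(\pi) \cap \mathrm{dom}(\gamma) : \pi(r) = \gamma(r)}$ is closed by continuity of $r \mapsto \pi(r) - \gamma(r)$. Setting $B_\pi := \bigcup_{\gamma \in \Gamma} B_{\pi,\gamma}$, which is a compact subset of $[s,t]$, I would cover it for any $\eta > 0$ by finitely many open intervals $\bc{(\alpha_j, \beta_j)}_j$ whose combined $I$-mass along $\pi$ is at most $\eta$, and write $[s_1, t_1], \ldots, [s_{m_\pi}, t_{m_\pi}]$ for the maximal closed components of $[s,t] \setminus \bigcup_j (\alpha_j, \beta_j)$. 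Each $\pi_i := \pi|_{[s_i, t_i]}$ is then strictly disjoint from every path in $\Gamma$; since $\Pi$ was already a network, sub-paths arising from distinct $\pi, \pi' \in \Pi$ are disjoint as well. Consequently $\Gamma' := \Gamma \cup \bc{\pi_i : \pi \in \Pi, 1 \leq i \leq m_\pi}$ is a finite network containing $\Gamma$. Iterating Lemma \ref{lemma: breakgd} yields $\sum_i I(e, \pi_i) \geq I(e, \pi) - \eta$; choosing $\eta = \epsilon/|\Pi|$ and summing over $\pi$ gives
\[
    I(e, \Gamma') = I(e, \Gamma) + \sum_{\pi \in \Pi} \sum_i I(e, \pi_i) \geq I(e, \Gamma) + I(e, \Pi) - \epsilon \geq I(e, \Pi) - \epsilon.
\]
If $\Gamma$ is a geodesic network, each $\pi_i$, being the restriction of a geodesic to a closed subinterval, is itself a geodesic by the metric composition law at its endpoints, so $\Gamma'$ remains a geodesic network.

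The main obstacle is producing the covering of $B_\pi$ with small $I$-mass. The cumulative rate function $F_\pi(r) := I(e, \pi|_{[s,r]})$ is bounded (since $I(e) < \infty$ forces $I(e, \pi) < \infty$) and non-decreasing, hence continuous off an at-most countable set; at continuity points, arbitrarily thin neighborhoods carry arbitrarily small $F_\pi$-mass. The subtle case is when a collision point of $\pi$ with some $\gamma \in \Gamma$ coincides with a jump of $F_\pi$, which happens only when both paths traverse a common atom of $e$ at the same time. Because atoms are countable (Lemma \ref{lemma: ctble}) and $\Gamma$ is finite, such coincidences are finite in number, and the half-open version of Lemma \ref{lemma: breakgd} allows slicing $\pi$ just to the right of each such collision point so that the jump is absorbed into a retained sub-path rather than into the excised window.
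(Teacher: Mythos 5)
There is a genuine gap in the step where you claim to cover $B_\pi$ by finitely many open intervals whose combined $I$-mass along $\pi$ is at most $\eta$. That is impossible in general: the collision set $B_\pi$ (what the paper calls $K_\pi$) can be a fat set carrying a fixed amount of $I$-mass. For instance, if $\pi$ coincides with some $\gamma \in \Gamma$ on a whole interval $[a,b]$ with $I(e,\pi|_{[a,b]}) > 0$, then $[a,b] \subset B_\pi$ and any open cover of $B_\pi$ has $I$-mass at least $I(e,\pi|_{[a,b]})$, which does not go to zero with $\eta$. The extreme case $\Pi = \Gamma = \{\gamma\}$ makes this stark: $B_\pi = [s,t]$, all of $\pi$ is excised, $\sum_i I(e,\pi_i) \approx 0$, and your intermediate inequality $\sum_i I(e,\pi_i) \geq I(e,\pi) - \eta$ fails badly. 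Your discussion of atoms and jump points of $F_\pi$ is addressing a different and milder phenomenon (covering finitely many problematic points), not the real obstruction, which is that $B_\pi$ can be uncountable.

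The conclusion of the lemma is still true, and the cure is visible in your own final display: you wrote $I(e,\Gamma') = I(e,\Gamma) + \sum_\pi \sum_i I(e,\pi_i)$ and then threw away the $I(e,\Gamma)$ term. The paper keeps it. Its proof excises all of $K_\pi$ (not just an arbitrarily thin cover of it), but then observes that the $I$-mass of $\pi$ on $K_\pi$ is exactly the $I$-mass of the matching pieces of $\gamma$'s there, because $\pi$ and those $\gamma$'s literally agree on $K_\pi$ and $I(e,\cdot)$ restricted to a path depends only on the path's values. Since the $\pi$'s are mutually disjoint, the pieces of $\Gamma$ they coincide with are disjoint as well, so by Lemma~\ref{lemma: breakgd} their total $I$-mass is bounded above by $I(e,\Gamma)$. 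Only the remaining open complement $O_\pi = [s_\pi,t_\pi] \setminus K_\pi$ needs to be approximated from inside by finitely many closed subintervals, and there no smallness of $B_\pi$ is required, just the finiteness $I(e,\pi) < \infty$ to truncate the countable sum. Your construction of the new sub-paths, the disjointness check, and the observation that closed restrictions of geodesics are geodesics are all fine; what is missing is the accounting step that lets $I(e,\Gamma)$ absorb the excised mass rather than trying to make that mass vanish.
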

\begin{proof}
	Let $n$ be the cardinality of $\Pi$. For $\gamma \in \Gamma$, $\pi \in \Pi$, let $K_{\gamma, \pi}$ be the set of times $t$ for which $\gamma(t) = \pi(t)$. The set $K_\pi = \bigcup_{\gamma \in \Gamma} K_{\gamma, \pi}$ is closed, so $[s_\pi, t_\pi] \setminus K_\pi$ is a countable union of relatively open intervals of $[s_\pi, t_\pi]$. We denote the collection of these open intervals by $O_\pi$ and choose a finite subset $O'_\pi$ of $O_\pi$ such that 
	\[
		\sum_{I \in O'_\pi}I(e, \pi|_{I}) \geq \sum_{I \in O_\pi}I(e, \pi|_{I}) - \epsilon/n
	\] 
	Let $m_\pi$ be the cardinality of $O'_\pi$ and $I \in O'_\pi$. By the definition of our rate function, for each interval $I$ we have a finite collection of disjoint intervals $C_{\pi,I}$ such that
	\[\sum_{[a, b] \in C_{\pi, I}} \Theta(e, (\bar \pi(a), \bar \pi(b))) \geq I(e, \pi|_I) - \epsilon/nm_\pi.\]
	By Lemma \ref{lemma: ugd}, we have that 
	\[\sum_{[a, b] \in C_{\pi, I}} I(e, \pi|_{[a,b]})\geq I(e, \pi|_I) - \epsilon/nm_\pi.\]
	Hence, we have a finite collection of closed intervals $C_\pi = \cup _{I \in O_\pi'} C_{\pi, I}$ such that each interval in $C_\pi$ is contained in an interval of $O_\pi$ and the collection of paths $D_\pi$, which we get on restricting $\pi$ to $C_\pi$ satisfies
	\[I(e, D_\pi) \geq \sum_{I \in O_\pi}I(e, \pi|_{I}) - 2\epsilon/n.\]
	Let $\Gamma' = \Gamma \cup \bigcup_{\pi \in \Pi} D_\pi.$ By construction, $\Gamma'$ is a finite network of geodesic paths. Furthermore,
	\begin{align*}
		I(e, \Gamma') &= I(e, \Gamma) + \sum_{\pi \in \Pi} I(e, D_\pi) \\
		&\geq \sum_{\pi \in \Pi} \sum_{I \in K_\pi} I(e, \pi|_I) + \sum_{\pi \in \Pi} \br{\sum_{I \in O_\pi}I(e, \pi|_{I})} - 2\epsilon/n \\
		&= I(e, \Pi) - 2\epsilon,
	\end{align*} 
	where the last inequality follows by repeatedly applying Lemma \ref{lemma: breakgd}.
\end{proof}

\begin{proposition} \label{prop: e-comp}
	Let $e \in \cD$, then there exists an $e-$complete network $\Gamma$. In fact, every finite network is contained in an $e-$complete network. 
\end{proposition}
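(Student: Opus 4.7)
The plan is to construct an $e$-complete network $\Gamma^* \supset \Gamma$ by an iterative enlargement that absorbs successively better finite approximations of the supremum defining $I(e)$, and then to observe that the countable union of the resulting chain remains a network with the correct rate. Since $e \in \cD$, Lemma \ref{lemma: equiv} gives $I(e) = \Theta(e) < \infty$. By the remark following \eqref{eq: gdrf2} together with the proof of Lemma \ref{lemma: equiv}, the supremum in the definition of $I(e)$ may be taken over finite networks of geodesics, so I first fix a sequence $(\Pi_n)$ of finite geodesic networks with $I(e, \Pi_n) \to I(e)$.

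Next, I set $\Gamma_0 := \Gamma$ and apply Lemma \ref{lemma: ntwrk-upgrd} inductively with inputs $(\Gamma_n, \Pi_{n+1}, 2^{-n})$ to produce an increasing chain of finite networks $\Gamma_0 \subset \Gamma_1 \subset \cdots$ satisfying
\[
I(e, \Gamma_{n+1}) \;\geq\; I(e, \Pi_{n+1}) - 2^{-n}.
\]
Define $\Gamma^* := \bigcup_n \Gamma_n$. This is a countable collection of closed paths, and any two of them lie in some common $\Gamma_N$, which is a network; hence they are disjoint, so $\Gamma^*$ is itself a network. Because $I(e, \cdot)$ is a sum of non-negative terms indexed by the paths of a network,
\[
I(e, \Gamma^*) \;\geq\; I(e, \Gamma_n) \;\geq\; I(e, \Pi_n) - 2^{-(n-1)} \quad \text{for every } n \geq 1.
\]
Letting $n \to \infty$ yields $I(e, \Gamma^*) \geq I(e)$, and the reverse inequality is immediate from the definition of $I(e)$ as a supremum over networks. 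Thus $\Gamma^*$ is $e$-complete and contains $\Gamma$, giving the second statement of the proposition; the first statement follows by taking $\Gamma = \emptyset$ (or any finite network, such as a singleton geodesic).

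The real content has already been packaged into Lemma \ref{lemma: ntwrk-upgrd}; given that tool, the argument here is a transparent diagonal construction. The only potential subtlety I anticipate is bookkeeping: one must verify that the countable increasing union of finite networks still qualifies as a network in the sense of Section \ref{sec: ntwrk}, and that the rate adds up correctly across the union. Both points are handled by the monotonicity of the chain — pairwise disjointness is always witnessed at a finite stage, and $I(e, \cdot)$ is manifestly monotone in the network because it is a sum of non-negative contributions.
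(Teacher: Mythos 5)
Your proof is correct and follows essentially the same approach as the paper: fix a sequence of finite geodesic networks $\Pi_n$ approximating the supremum, then use Lemma~\ref{lemma: ntwrk-upgrd} to build an increasing chain of finite networks starting from the given one, and take the countable union. You include a few explicit verifications the paper leaves implicit (that the union is still a countable disjoint collection, and that $I(e,\cdot)$ is monotone under inclusion), which is good bookkeeping but not a different argument.
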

\begin{proof}
Consider a sequence of finite geodesic networks $\Pi_n$ such that 
\[I(e, \Pi_n) \geq I(e) - 1/n.\]
Let $\Gamma_1 =\Pi_1$ and define $\Gamma_n$ inductively as the network given by Lemma \ref{lemma: ntwrk-upgrd} which contains $\Gamma_{n-1}$ and  
\[I(e, \Gamma_n) \geq I(e, \Pi_n) - 1/n.\]
The network $\Gamma = \bigcup_n \Gamma_n$ is our desired network. To show the second part of our claim, we simply set $\Gamma_1$ as the given finite network.
\end{proof}

As a consequence of the above proposition, we show that our rate function $I$ is strictly monotone.
\begin{corollary} \label{cor: monotone}
	Let $e, e' \in \cD$ such that $e' \leq e$ and $e' \neq e$. Then $I(e') < I(e)$.
\end{corollary}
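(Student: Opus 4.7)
The strategy is to reduce via Lemma~\ref{lemma: equiv} ($I=\Theta$ on $\cD$) to proving $\Theta(e')<\Theta(e)$, and then to produce a disjoint collection for $e$ whose $\Theta$-sum exceeds the supremum for $e'$. Since $e'\le e$ and $e'\neq e$, I pick $u_0=(p_0,q_0)\in\oR$ with $e'(u_0)<e(u_0)$. Because $e'\in\cD$ is $d$-dominant and $d(u_0)=F(m_0,1)(t_0-s_0)$ (using $1$-homogeneity of $F$, a consequence of Property~\ref{A1}, with $m_0=(y_0-x_0)/(t_0-s_0)$), I obtain
\[
e(u_0)/(t_0-s_0)>e'(u_0)/(t_0-s_0)\ge F(m_0,1).
\]
The strict monotonicity of $J_{m_0}$ past $F(m_0,1)$ from Property~\ref{A2}(ii) then yields both $\Theta(e,u_0)>0$ and $\eta:=\Theta(e,u_0)-\Theta(e',u_0)>0$.

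Argue by contradiction: assume $\Theta(e)=\Theta(e')=:M$. Fix $\epsilon\in(0,\Theta(e,u_0)/4)$ and pick a finite disjoint collection $V=\{v_j\}_{j=1}^k$ with $\sum_j\Theta(e',v_j)\ge M-\epsilon$. Let $\gamma_0$ be a rightmost $e$-geodesic from $p_0$ to $q_0$ (Lemma~\ref{lemma: gdf}). Since $V$ is finite, the continuous curve $\gamma_0$ enters only finitely many of the ``diamonds'' $\{r:p_j\le r\le q_j\}$, so I can choose a partition $s_0=r_0<\cdots<r_N=t_0$ and a subset $\mathcal I\subset\{1,\dots,N\}$ such that each sub-piece $w_i:=(\bg_0(r_{i-1}),\bg_0(r_i))$ with $i\in\mathcal I$ is $\triangledown$-disjoint from every $v_j$, while the kept pieces capture $\sum_{i\in\mathcal I}e(w_i)\ge e(u_0)-\epsilon$ (using that $\gamma_0$ is an $e$-geodesic so the full telescoping sum along the partition is $\ge e(u_0)-\epsilon$, and that the blocked pieces can be arranged to contribute at most $\epsilon$). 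Corollary~\ref{cor: path-approx} then gives $\sum_{i\in\mathcal I}\Theta(e,w_i)\ge\Theta(e,u_0)-o_\epsilon(1)$, and since $V\cup\{w_i:i\in\mathcal I\}$ is disjoint,
\[
\Theta(e)\ge\sum_j\Theta(e,v_j)+\sum_{i\in\mathcal I}\Theta(e,w_i)\ge(M-\epsilon)+\Theta(e,u_0)-o_\epsilon(1)>M
\]
for $\epsilon$ sufficiently small, contradicting $\Theta(e)=M$.

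The main obstacle I foresee is justifying the claim that the discarded sub-pieces of $\gamma_0$ (those meeting some $v_j$-diamond) collectively carry only an $\epsilon$-fraction of the $e$-mass $e(u_0)$. If some $v_j$-diamond covers a long stretch of $[s_0,t_0]$ along which $\gamma_0$ sits, naive discarding would lose a constant fraction of $e(u_0)$. To deal with this I would exploit the rightmost-geodesic structure from Lemma~\ref{lemma: gd1} — shifting partition endpoints to lie just outside each blocking diamond, with losses controlled by the composition law of Definition~\ref{def: D}(iii) and upper-semicontinuity — or, as a fallback, observe that if $\gamma_0$ cannot escape some $v_j$'s diamond at all then the strict gap at $u_0$ must already live inside $v_j$ itself, so $\Theta(e,v_j)-\Theta(e',v_j)\ge\eta/2$ by the same strict-monotonicity argument applied to $v_j$, and this single-term discrepancy alone forces $\sum_j\Theta(e,v_j)\ge M-\epsilon+\eta/2$, again contradicting $\Theta(e)=M$.
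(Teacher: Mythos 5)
Your reduction to $\Theta(e')<\Theta(e)$ and your use of strict monotonicity of $J_{m_0}$ past the shape value to get $\Theta(e,u_0)>\Theta(e',u_0)\geq 0$ are both fine, but the ``carving'' step has a genuine gap, and your proposed fixes do not close it. The problem is exactly what you flag: given a near-optimal disjoint collection $V=\{v_j\}$ for $e'$ and an $e$-geodesic $\gamma_0$ for $u_0$, there is no reason you can extract sub-pieces of $\gamma_0$ that are $\dis$-disjoint from every $v_j$ while retaining nearly all of $e(u_0)$ --- a single $v_j$ whose diamond contains a positive-length stretch of $\gamma_0$ forces you to discard that stretch, since any sub-piece $w_i$ with an endpoint in that diamond already shares a point with it (the endpoint itself), and you cannot ``shift just outside'' a diamond that entirely envelops a time interval. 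Your fallback is also unsound: from $e(u_0)>e'(u_0)$ and $\vee_{p_0}\cap\vee^{q_0}\subset\vee_{p_j}\cap\vee^{q_j}$ it does \emph{not} follow that $e(v_j)>e'(v_j)$. If $p_j,p_0,q_0,q_j$ are not collinear, the slack in the reversed triangle inequality (strict concavity of $F$ off rays) can absorb the excess, so that $e(v_j)=d(v_j)=e'(v_j)$ while $e(u_0)>e'(u_0)$; thus $\Theta(e,v_j)=\Theta(e',v_j)=0$ and the ``single-term discrepancy'' never materializes.

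The paper's own proof sidesteps all of this by going through the complete-network machinery rather than raw disjoint collections of points. Pick $u$ with $e(u)>e'(u)$ and let $\gamma$ be an $e$-geodesic with endpoints $u$, so $|\gamma|_e=e(u)>e'(u)\geq|\gamma|_{e'}$ and hence $I(e,\gamma)>I(e',\gamma)$ by strict monotonicity of $J$. Then by Proposition~\ref{prop: e-comp} extend the singleton $\Pi=\{\gamma\}$ to an $e'$-complete network $\Gamma$; since $\Gamma$ is $e'$-complete it attains $I(e')$, i.e.\ $I(e',\Gamma)=I(e')$, and since $e\geq e'$ termwise with strict inequality on the piece $\gamma$ one gets $I(e)\geq I(e,\Gamma)>I(e',\Gamma)=I(e')$. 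The surgery you are trying to carry out by hand (merging the competitor collection $V$ for $e'$ with pieces of $\gamma_0$ without losing mass) is precisely what Lemma~\ref{lemma: ntwrk-upgrd} and Lemma~\ref{lemma: breakgd} accomplish, but crucially they operate on \emph{networks of paths}, which can be cut at time coordinates without losing $\Theta$-mass; arbitrary diamonds $v_j$ cannot be so subdivided, which is why the direct point-collection approach fails. You should replace the carving argument with an appeal to Proposition~\ref{prop: e-comp}.
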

\begin{proof}
	Since $e' \neq e$, there exists an $e-$geodesic $\gamma$ with endpoints $u$ such that $\abs{\gamma}_e > \abs{\gamma}_{e'}$. By Property \ref{A2}, the point to point rate function is strictly increasing and hence
	\[
		I(e', \gamma) < I(e, \gamma).
	\]
	Let $\Pi =\bc{\gamma}$. Using Proposition \ref{prop: e-comp}, we can extend $\Pi$ to an $e'-$complete network $\Gamma$. Hence,
	\[
		I(e) \geq I(e, \Gamma) > I(e', \Gamma) = I(e'). \qedhere
	\] 
\end{proof}
Proposition \ref{prop: e-comp} will also allow us to fully understand the structure of metrics $e \in \cD$. In fact, knowledge about an $e-$complete network is enough to understand the whole metric $e$. We formalize this idea in the following proposition.

\begin{proposition} \label{prop: e-structure}
	Assume that we have a finite rate metric $e$ with an $e-$complete network $\Gamma$. Since paths in $\Gamma$ are disjoint, given any point $(p;q) = (x,s;y,t)$, there is at most one path $\gamma \in \Gamma$ such that $\gamma(s) =x$  and $\gamma(t) =y$.
	
	Hence, we can unambiguously define $e^0(p;q) = \abs{\gamma_{[s,t]}}_e$ and $e^0(p^+;q) = \abs{\gamma_{(s,t]}}_e$ if $\gamma(s) =x$ and $\gamma(t) =y$ for some $\gamma \in \Gamma$, and $e^0(p;q) = e^0(p^+;q) = 0$ otherwise. Let
	\[
		e'(p;q) = \sup_\cP \sup_\pi \;(e^0 \vee d)(\bar \pi(t_0),\bar \pi(t_1)) + \sum_{i = 1}^{l-1} (e^0 \vee d)(\bar\pi(t_i)^+, \bar\pi(t_{i+1})), 
	\]
	where the outer supremum is over partitions $s=t_0 < \cdots < t_l =t$ of $[s,t]$ and the inner supremum is over paths $\pi$ with endpoints $p$ and $q$. We claim that $e'=e$. Furthermore, the inner supremum is non-decreasing as one makes the partition finer.
\end{proposition}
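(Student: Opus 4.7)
My plan is to establish the refinement monotonicity first, then the two inequalities $e' \leq e$ and $e' \geq e$ separately.

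For refinement monotonicity, suppose $r \in (t_i, t_{i+1})$ is added to a partition $\cP$. I would exhibit a modified path $\pi'$ agreeing with $\pi$ off $(t_i, t_{i+1})$ such that the new sum is at least the old one. If the original summand $(e^0 \vee d)(\bar\pi(t_i)^+, \bar\pi(t_{i+1}))$ is realized by $e^0$, then $\bar\pi(t_i)$ and $\bar\pi(t_{i+1})$ lie on a common $\gamma \in \Gamma$; placing $\pi'(r) = \gamma(r)$ splits the term via the additivity $|\gamma_{(t_i, r]}|_e + |\gamma_{(r, t_{i+1}]}|_e = |\gamma_{(t_i, t_{i+1}]}|_e$, which holds because subpaths of an $e$-geodesic remain $e$-geodesics. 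If it is realized by $d$, placing $\pi'(r)$ on the straight segment connecting the two endpoints gives $d(\bar\pi(t_i), \bar\pi'(r)) + d(\bar\pi'(r), \bar\pi(t_{i+1})) = d(\bar\pi(t_i), \bar\pi(t_{i+1}))$ by positive homogeneity of the shape function $F$ along rays.

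For $e' \leq e$, each $\gamma \in \Gamma$ is an $e$-geodesic, so $e^0 \leq e$ on its domain of definition; combined with $d \leq e$ from Definition \ref{def: D}(i), this gives $e^0 \vee d \leq e$. Iterating the generalized reverse triangle inequality \eqref{eq: triangle} over the partition yields $e(\bar\pi(t_0), \bar\pi(t_1)) + \sum_{i \geq 1} e(\bar\pi(t_i)^+, \bar\pi(t_{i+1})) \leq e(p, q)$, and bounding each $(e^0 \vee d)$ summand by the corresponding $e$ term completes this direction.

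For $e' \geq e$, I would take an $e$-rightmost geodesic $\gamma^*$ from $p$ to $q$ via Lemma \ref{lemma: gdf} and set $\pi = \gamma^*$. Let $K := \{r \in [s,t] : \gamma^*(r) = \gamma(r) \text{ for some } \gamma \in \Gamma\}$. On each connected component $(a, b)$ of $[s,t] \setminus K$, the subpath $\gamma^*|_{[a,b]}$ is disjoint from every $\gamma \in \Gamma$, so $\Gamma \cup \{\gamma^*|_{[a, b]}\}$ is a valid network; by $e$-completeness, $I(e, \Gamma \cup \{\gamma^*|_{[a,b]}\}) \leq I(e) = I(e, \Gamma)$, forcing $I(e, \gamma^*|_{[a,b]}) = 0$. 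Consequently, $\Theta(e, u) = 0$ for every pair of points on the subpath, which together with $e \geq d$ and Property \ref{A2}(ii) gives $e = d$ pointwise along $\gamma^*|_{[a,b]}$. I would then pick a partition $\cP$ refining the endpoints of the $\Gamma$-coincidence intervals: on coincidence pieces $(e^0 \vee d)$ captures $e^0$, which equals $e$ along $\gamma^*$; on non-coincidence pieces $(e^0 \vee d)$ captures $d$, which equals $e$ by the claim. Since $\gamma^*$ is an $e$-geodesic, $|\gamma^*|_{e, \cP} = e(p, q)$ for every partition $\cP$, so the sum in the definition of $e'$ matches $e(p, q)$.

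The principal obstacle is the careful handling of the $(\cdot)^+$ notation: the pointwise identity $e = d$ along a non-coincidence subpath does not automatically yield $e(p^+, q) = d(p^+, q)$, since the defining supremum can pick up perturbations that leave the subpath. To resolve this I would exploit Lemma \ref{lemma: ctble}: the atoms of $e$ form a countable set, so partition breakpoints may be chosen in its dense complement, where $e(p^+, q) = e(p, q)$ (and analogously for $d$). The refinement monotonicity proved above ensures restricting to such ``clean'' partitions incurs no loss, closing the gap.
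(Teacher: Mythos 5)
Your argument for the refinement monotonicity and for the inequality $e' \leq e$ matches the paper's reasoning (the latter is essentially the observation $d \leq e' \leq e$, which follows because paths in $\Gamma$ are $e$-geodesics and $e \geq d$). The problem lies in the direction $e' \geq e$, where you depart substantially from the paper's proof, and this is where there is a genuine gap.

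You work with $K := \{r : \gamma^*(r) = \gamma(r) \text{ for some } \gamma \in \Gamma\}$ and argue on ``connected components $(a,b)$ of $[s,t] \setminus K$,'' deducing $I(e, \gamma^*|_{[a,b]}) = 0$ by adjoining $\gamma^*|_{[a,b]}$ to $\Gamma$ as a network. But $\Gamma$ is in general a countably infinite collection, so $K = \bigcup_{\gamma} K_\gamma$ is only $F_\sigma$, need not be closed, and $[s,t]\setminus K$ need not be open nor a union of non-degenerate intervals. In particular $K$ can be dense in some subinterval, in which case there is no closed subinterval of $\gamma^*$ disjoint from $\Gamma$ to which your network argument applies. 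Even setting aside the density issue, your argument as written has a further defect: at the boundary points $a,b$ the curve $\gamma^*$ may coincide with some $\gamma\in\Gamma$, so $\Gamma \cup \{\gamma^*|_{[a,b]}\}$ is not a network under the paper's pointwise-disjointness definition; one would at least need to shrink to $[a+\epsilon, b-\epsilon]$ and take a limit, which you do not justify. These issues mean the claimed conclusion ``$e = d$ pointwise along $\gamma^*|_{[a,b]}$'' is not established in general.

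The paper avoids these difficulties entirely by a softer route: having shown $d \leq e' \leq e$ with $e'$ agreeing with $e$ on $\Gamma$ (since the paths are geodesics), it observes $I(e') \geq I(e', \Gamma) = I(e, \Gamma) = I(e)$ by $e$-completeness, and then invokes the strict monotonicity of $I$ (Corollary \ref{cor: monotone}) to conclude that $e' \leq e$ together with $I(e') \geq I(e)$ forces $e' = e$. This sidesteps any need to analyze the coincidence set of $\gamma^*$ with $\Gamma$ or to prove $e=d$ off the network. I would suggest replacing your $e' \geq e$ argument with an appeal to Corollary \ref{cor: monotone} in this manner.
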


\begin{proof}
	The refinement claim can be proven by induction. As we add extra time points, the inequality to check is 
	\[\sup_x (e^0\vee d)(p; x,r) + (e^0\vee d)((x,r)^+;q) \geq (e^0\vee d)(p;q),\]
	for $(x,r) \in \vee_p$ such that $\vee_{(x,r)} \ni q$. When $d(p;q) \geq e^0(p;q)$, we can take $x$ to be on the line segment connecting $p$ and $q$ and $d$ in both terms on the left. Otherwise, let $\gamma \in \Gamma$ so that $p,q$ lie on the curve $\gamma$ and take $x= \gamma(r)$ and $e^0$ in both terms on the left.

	It follows from definition that, $d \leq e' \leq e$. This implies that $\Theta(e') < \infty$. Thus, $e' \in \cD$ as \eqref{eq: metcomp} follows from the definition of $e'$. Since $\Gamma$ is an $e-$complete network and $e',e$ agree on $\Gamma$, we have that
	\[I(e') \geq I(e', \Gamma) =I(e, \Gamma) = I(e).\]
	By Corollary \ref{cor: monotone}, $e'$ must be equal to $e$.
\end{proof}
Metrics which have $e-$complete networks will be called planted network metrics. The terminology is motivated by the fact that the structure of the metric is completely determined by the network. The following corollary is a direct consequence of Proposition \ref{prop: e-structure}.

\begin{corollary} \label{cor: metric-approx}
	Any $e \in \cD$ can be approximated by finite planted network metrics.
\end{corollary}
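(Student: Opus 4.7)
The plan is to use Proposition \ref{prop: e-comp} together with the reconstruction formula of Proposition \ref{prop: e-structure}. Starting from $e \in \cD$, invoke Proposition \ref{prop: e-comp} to obtain an $e$-complete network $\Gamma$, which the proof of that proposition builds as an increasing union $\Gamma = \bigcup_n \Gamma_n$ of finite subnetworks. For each $n$, define the candidate approximation $e_n$ by the formula of Proposition \ref{prop: e-structure} applied to $\Gamma_n$ in place of $\Gamma$. By design, $\Gamma_n$ will be $e_n$-complete and $e_n$ will be the associated finite planted network metric.

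The first step is to verify that each $e_n$ lies in $\cD$ with $\Gamma_n$ as an $e_n$-complete network. Since the defining supremum for $e_n$ uses only $e$-lengths of segments of paths in $\Gamma_n \subseteq \Gamma$ together with $d$, and each such sum is bounded by $e(p;q)$ via the generalized reverse triangle inequality for $e$, one obtains $e_n \leq e$ pointwise. Membership of $e_n$ in $\cE$ and satisfaction of the metric composition law follow from the structure of the defining supremum, by essentially the same arguments used in the proof of Proposition \ref{prop: e-structure}. The domination $e_n \leq e$ then yields $\Theta(e_n) \leq \Theta(e) < \infty$ (using monotonicity of $J$ in its second argument from Property \ref{A2}(ii)), so $e_n \in \cD$; and $\Gamma_n$ is $e_n$-complete because the formula forces $e_n' = e_n$ when built from $\Gamma_n$, by the same reasoning given at the end of Proposition \ref{prop: e-structure}.

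Next I would establish pointwise convergence $e_n(u) \uparrow e(u)$ for each $u = (p;q) \in \oR$. Fix $\varepsilon > 0$; applying Proposition \ref{prop: e-structure} to $e$ itself provides a finite partition $s = t_0 < \cdots < t_l = t$ and a path $\pi$ with endpoints $p, q$ such that the resulting sum of $(e^0 \vee d)$-terms exceeds $e(u) - \varepsilon$. Only finitely many paths of $\Gamma$ can appear in this finite sum, so for all $n$ large enough they all lie in $\Gamma_n$; the same partition and path then witness $e_n(u) \geq e(u) - \varepsilon$, which combined with $e_n \leq e$ yields $e_n(u) \to e(u)$.

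Finally I would upgrade pointwise convergence to hypo-convergence. Because $e_n \leq e$ and $e$ is upper semi-continuous, condition \ref{hypo1} is automatic from $\limsup_n e_n(u_n) \leq \limsup_n e(u_n) \leq e(u)$ whenever $u_n \to u$, while condition \ref{hypo2} follows by taking the constant sequence $u_n \equiv u$ and invoking the pointwise limit above. The main obstacle I anticipate is the first step: confirming that the formula of Proposition \ref{prop: e-structure} genuinely produces an element of $\cD$ when it is seeded by a merely finite subnetwork rather than an $e$-complete one, in particular preserving upper semi-continuity through the outer supremum and ensuring that $\Gamma_n$ is actually $e_n$-complete rather than just contained in an $e_n$-complete network. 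Once that bookkeeping is in place, the convergence argument is essentially immediate from the finite-support structure of the approximating sums produced by Proposition \ref{prop: e-structure}.
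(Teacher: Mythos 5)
Your argument is correct, and it diverges from the paper's proof at the concluding step. Both proofs share the same setup: take an $e$-complete network $\Gamma = \bigcup_n \Gamma_n$ with $\Gamma_n$ finite and increasing (Proposition~\ref{prop: e-comp}), and define $e_n$ by the reconstruction formula of Proposition~\ref{prop: e-structure} applied to $\Gamma_n$, noting $e_n \leq e_{n+1} \leq e$. Where you differ is in how the convergence $e_n \to e$ is established. The paper works indirectly: it uses $I(e, \Gamma_n) \geq I(e) - 1/n$ (from the construction in Proposition~\ref{prop: e-comp}) to get $I(e_n) \geq I(e) - 1/n$, extracts a hypo-convergent subsequence via compactness of the sublevel sets $\cD_m$, and then invokes the strict monotonicity of $I$ (Corollary~\ref{cor: monotone}) to force the subsequential limit to equal $e$. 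You instead prove pointwise convergence $e_n(u) \uparrow e(u)$ directly, by observing that any finite partition-and-path approximating the supremum in Proposition~\ref{prop: e-structure} within $\varepsilon$ touches only finitely many paths of $\Gamma$, which are eventually contained in $\Gamma_n$, so $e^0_n$ agrees with $e^0$ on all relevant pairs and $e_n(u) \geq e(u) - \varepsilon$. You then verify the two hypo-convergence conditions H(I) and H(II) by hand, using $e_n \leq e$ and upper semi-continuity of $e$ for the first and the constant sequence with the pointwise limit for the second. Your route is more constructive and elementary: it sidesteps the rate function, Corollary~\ref{cor: monotone}, and the compactness extraction entirely, at the cost of a slightly more delicate bookkeeping argument about the finite support of the approximating sums. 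The obstacle you flag---verifying that Proposition~\ref{prop: e-structure}'s construction applied to a finite subnetwork $\Gamma_n$ actually yields an element of $\cD$ with $\Gamma_n$ as an $e_n$-complete network---is real but is equally unaddressed (and implicitly assumed) in the paper's proof, so it is not a defect specific to your approach.
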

\begin{proof}
	Let $\Gamma$ be an $e-$complete network. For all $n \in\bN$, we can find a finite network $\Gamma_n \subset \Gamma$ such that $I(e, \Gamma_n) \geq I(e) - 1/n$ and $\Gamma_n \subset \Gamma_{n+1}$. By Proposition \ref{prop: e-structure}, using each network $\Gamma_n$ we can construct a finite rate metric $e_n$ such that
	\begin{align*}
		I(e_n)\geq I(e) - 1/n && \mathrm{and} &&	e_n \leq e_{n+1} \leq e.		
	\end{align*}
	Since $\cD$ is compact, we can find a subsequence $e_{n_k}$ converging to some $e' \in \cD$. By Theorem \ref{hypo1}, we have that $e'(u) \geq \limsup_k e_{n_k}(u)$ and $e' \leq e$. Since $e_{n_k}$ is a non-decreasing sequence, we have that $e' \geq e_{n}$ for all $n$. Hence, $I(e') \geq I(e)$. Combining this with Corollary \ref{cor: monotone}, we have that $e' = e$. 
\end{proof}

\section{Proof of Upper Bound} \label{sec: pf-ub}
In the next few lemmas, we gather the tools needed to prove the upper bound for the probability of the large deviation event. The strategy for proving the upper bound is identical to the one adopted in \cite[Section 7]{DDV24}; however the intermediate steps must be proved differently. 

Let $K_m := \bc{e \in \cE : \Theta(e) \leq m}$. Lower semi-continuity of $\Theta$ (Lemma \ref{lemma: lsc}) implies that $K_m$ is closed in $\cE$. Combined with Lemma \ref{Arz}, we have that $K_m$ is compact.

\begin{lemma} \label{lemma: kmub}
	The following bound holds for all $\delta > 0, m > 0$
	\begin{equation}
		\limsup_{n \to \infty}  \frac{1}{n} \log \P{\bd(K_m, T_n) \geq \delta} \leq -m. 
	\end{equation}
\end{lemma}
\begin{proof}
	Let $e \notin B_\delta(K_m)$, and fix an $\epsilon >0$.	By lower semi-continuity of $\Theta$, there exists an $\eta_e >0$ such that $$\inf_{\bd(f,e) < \eta_e} \Theta(f) \geq \Theta(e) - \epsilon/2.$$  
	The collection $\bc{B_{\eta_e}(e)}$ forms an open cover of the set $B_\delta(K_m)^c$. The set $B_\delta(K_m)^c$ is closed in $\cE$, hence compact. Thus, it has a finite subcover $\bc{B_{\eta_i}(e_i)}_{i \leq k}$. 

	On the event $\bc{\bd(K_m, T_n) \geq \delta}$, $T_n$ must lie in one of these open balls. Hence, there exists an index $i$ such that
	\[\bd(T_n, e_i) \leq \eta_i.\]
	This implies that $\Theta(T_n) \geq m - \epsilon/2$. Now, by definition of $\Theta$, $\exists u_1, \cdots, u_l$ such that $u_i \triangledown u_j,$ for $i \neq j$ and
	\[
		\sum_{i=1}^l \Theta(T_n, u_i) \geq m -\epsilon.
	\] 
	By the independence property of $T_n$ over disjoint points and Property \ref{A2},
	\[
		\P{\sum_{i=1}^l \Theta(T_n, u_i) \geq m} \leq \exp(-n(m -\epsilon)).
	\]
	Since, the bound holds for all $\epsilon >0$, we get our desired inequality.
\end{proof}

\begin{lemma} \label{lemma: Dm bnd}
	The following bound holds for all $\delta > 0, m > 0$
	\begin{equation}
		\limsup_n \frac{1}{n} \log \P{\bd(\cD_m, T_n) \geq \delta} \leq -m.
	\end{equation}
\end{lemma}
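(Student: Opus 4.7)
My plan is to bootstrap from Corollary \ref{cor: kmub}, which already gives $\P{T_n \notin K_m} \leq \exp\br{-nm}$ for $n$ large, to the same bound with $K_m$ replaced by the smaller set $\cD_m$. Since $\cD_m \subseteq K_m$ is distinguished only by the two additional requirements $e \geq d$ and the metric composition law \eqref{eq: metcomp}, it suffices to show that on the event $\bc{T_n \in K_m}$ one can produce some $\tilde T_n \in \cD_m$ with $\bd(T_n, \tilde T_n) < \delta$, except on a further event of probability $\exp\br{-\omega(n)}$, which is negligible compared to $\exp\br{-nm}$.

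For the approximate $d$-dominance, I would apply Property \ref{A3} pointwise on a fine $\bd$-determining net inside a large compact $K \subset \oR$ (using that $\tau_h$-convergence is effectively controlled by behavior on such compacts). Combined with a union bound and the monotonicity of $T_n$ in its endpoints to bridge between net points, this gives $\P{\inf_{u \in K}(T_n(u) - d(u)) \leq -\eta} \leq \exp\br{-\omega(n)}$ for any fixed $\eta > 0$.

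For the metric composition law, I would exploit that $T_n$ satisfies \eqref{eq: metcomp} exactly at lattice times $r \in n^{-1}\bZ$: the underlying LPP geodesic from $\ceil{np}$ to $\floor{nq}$ passes through a lattice vertex at each intermediate integer time, giving an exact decomposition. Combined with the density of $n^{-1}\bZ$ and Lemma \ref{lemma: max}, a small $\bd$-modification of $T_n$ will then satisfy \eqref{eq: metcomp} at every $r$. The naive choice $T_n \vee d$ fails, since the pointwise maximum of two functions obeying the reverse triangle inequality need not obey it; instead I would use the planted-network construction of Proposition \ref{prop: e-structure}, applied (via Corollary \ref{cor: metric-approx}) to a sufficiently rich finite geodesic sub-network of $T_n$. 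The resulting planted network metric automatically lies in $\cD$, is $d$-dominant (with $d$ serving as the background floor), and inherits $\Theta(\tilde T_n) \leq \Theta(T_n) \leq m$ since its defining network is a sub-network of the geodesics of $T_n$.

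Combining these pieces, $\P{\bd(\cD_m, T_n) \geq \delta} \leq \P{T_n \notin K_m} + \exp\br{-\omega(n)} \leq 2\exp\br{-nm}$ for $n$ large, and taking $(1/n)\log$ followed by $\limsup$ yields the claim. I expect the main obstacle to be the explicit construction of $\tilde T_n$: one must verify that a planted network metric built from a finite geodesic sub-network of $T_n$ is simultaneously $\bd$-close to $T_n$ on the chosen compact and genuinely lies in $\cD_m$, which requires careful use of the machinery of Section \ref{sec: ntwrk} together with uniform in $n$ control of the geodesic network of $T_n$.
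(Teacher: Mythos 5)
Your proposal correctly identifies the two obstructions that separate $\cD_m$ from $K_m$ (approximate $d$-dominance and the metric composition law), and your observation that $T_n$ satisfies the composition law exactly at discrete anti-diagonals is sound. However, the overall strategy — constructing an explicit modification $\tilde T_n \in \cD_m$ of $T_n$ — has a genuine gap, and the paper's proof takes a different and cleaner route that avoids the problem entirely.

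The gap: the planted-network machinery you invoke (Proposition \ref{prop: e-structure}, Corollary \ref{cor: metric-approx}) is stated and proved only for metrics already in $\cD$; in particular it presupposes $e \geq d$ so that the background $d$ serves as a genuine floor and the reverse triangle inequality of $e$ keeps the supremum over paths below $e$. Since $T_n$ typically fails $d$-dominance at small scales, you cannot apply this machinery to $T_n$ as given. If you instead build the planted metric directly from a finite network $\Gamma$ of $T_n$-geodesics with $d$ as the floor, you get a $\tilde T_n \in \cD$, but then neither $\Theta(\tilde T_n) \leq \Theta(T_n)$ nor $\bd(T_n, \tilde T_n) < \delta$ is automatic: $\Gamma$ need not be $\tilde T_n$-complete, the supremum over partitions and paths in the construction mixes $e^0$ with $d$ in ways not bounded by $T_n$ (precisely because $T_n \not\geq d$), and controlling the hypograph distance requires uniform-in-$n$ control of the geodesic network, which you acknowledge but do not supply. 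Your asserted chain $\Theta(\tilde T_n) \leq \Theta(T_n) \leq m$ is thus unjustified.

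The paper instead runs a compactness covering argument over $C_{m,\delta} := K_m \setminus B_\delta(\cD_m)$, which is compact by Lemmas \ref{Arz} and \ref{lemma: lsc}. For each $e \in C_{m,\delta}$ it produces a small ball on which $\P{T_n \in B_{\eta_e}(e)}$ is small: if $e$ fails $d$-dominance, Remark \ref{rmk: hypo-bound} propagates the failure to $T_n$ on the ball and Property \ref{A3} gives a super-exponentially small bound; if $e$ fails the composition law, Corollary \ref{corollary: mcl} shows that every $f$ in a small ball around $e$ fails it too, and since $T_n$ always satisfies the composition law by its very definition, the event $\bd(T_n, e) \leq \eta_e$ has probability zero. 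Covering $C_{m,\delta}$ by finitely many such balls and combining with Lemma \ref{lemma: kmub} finishes the proof. This sidesteps any construction of a modified metric, and in particular never needs to control $\Theta$ of an object built from $T_n$. If you want to salvage your strategy, the essential missing ingredient is a version of the planted-network construction valid for $e \in K_m$ (not just $e \in \cD$) together with a proof that the construction does not increase $\Theta$; absent that, the covering argument is the more robust path.
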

\begin{proof}
	Let $m, \delta >0$, and define
	\[C_{m, \delta} = K_m \setminus B_\delta(D_m).\]
	Hence, for every $e \in C_{m, \delta}$, at least one of the following properties must hold
	\begin{enumerate}
		\item There exists $u \in \oR$ such that $e(u) < d(u)$.
		\item There exists $(p,q) = (x,s;y,t) \in \oR$ and $r\in(s,t)$ such that 
		\[e(p,q) > \sup_{p < (z,r) < q} e(p, (z,r)) + e((z,r)^+, q).\]  
	\end{enumerate}
	We shall show that in both cases, $\exists \eta_e >0$ such that
	\begin{equation} \label{eq: Dm-ub}
		\P{d(T_n, e) < \eta_e} \leq (1+o(1))\exp \br{-nm}.
	\end{equation}
	Let $e$ satisfy (1), then $\exists \epsilon >0$ and $u =(x,s;y,t) \in \oR$ such that $e(u) \leq d(u) - 3\epsilon$.	Since $e$ is upper-semicontinuous and $d$ is continuous, we can choose $u$ such that $\abs{x-y} \neq t-s$. Again by upper semi-continuity of $e$, there exists an open ball $B \ni u$ such that $\sup \bc{e(w) : w \in B} < d(u) - 2 \epsilon$. Using Remark \ref{rmk: hypo-bound}, we can take $\eta$ small enough such that $\bd(T_n, e) \leq \eta$ implies that $T_n(u) \leq d(u) -\epsilon$. Now, by Property \ref{A3}, we have that,
	\[\P{d(T_n, e) \leq \eta} \leq \exp(-2nm),\]
	for $n$ large enough. If $e$ satisfies the second property, then Corollary \ref{corollary: mcl} implies that for $\bd(T_n, e) \leq \eta_e$, $T_n$ must also not satisfy the metric composition law. However, this event is not possible owing to the definition of $T_n$.
	
	Now, consider the open cover $\bc{B_{\eta_e}(e) : e \in C_{m,\delta}}$. By compactness of $C_{m, \delta}$, we have a finite subcover of open balls $\bc{B_{\eta_i}(e_i)}_{i=1}^k$. If $T_n \notin B_\delta(\cD_m)$, then either $T_n \notin B_\delta(K_m)$ or $T_n \in B_{\eta_i}(e_i)$ for some $i=1, \cdots, k$. The required bound for the probability now follows from Lemma \ref{lemma: kmub} and \eqref{eq: Dm-ub}.
\end{proof}

\begin{lemma} \label{lemma: smallballs}
	For any $e \in \cD$, we have
	\[
		\lim_{\delta \downarrow 0} \limsup_{n \to \infty} \frac{1}{n} \log \P{T_n \in B_\delta(e)} \leq - I(e).
	\]
\end{lemma}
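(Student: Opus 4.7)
The plan is to exploit the equality $I(e) = \Theta(e)$ from Lemma \ref{lemma: equiv} and reduce the event $\bc{T_n \in B_\delta(e)}$ to a joint upper-tail bound on $T_n$ at finitely many $\triangledown$-disjoint points, which can then be handled by Property \ref{A2} and the independence of $T_n$ across such points. Fix $\epsilon > 0$. Since $e \in \cD$, we have $I(e) = \Theta(e) < \infty$, so by definition of $\Theta$ there exists a finite, pairwise $\triangledown$-disjoint family $\bc{u_i}_{i=1}^k \subset \oR$ with $s_i < t_i$ such that
\[
  \sum_{i=1}^k \Theta(e, u_i) \geq I(e) - \epsilon.
\]

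Next I would translate hypo-closeness into pointwise lower bounds on $T_n$. Choose open neighborhoods $G_i \ni u_i$ of sufficiently small diameter so that (a) any selection $v_i \in G_i$ still produces a pairwise $\triangledown$-disjoint tuple, and (b) by the joint continuity of $J$ together with the continuity of the slope and time-increment maps on $\oR$, for every $v = (x_v, s_v; y_v, t_v) \in G_i$ the quantity $J\br{\frac{y_v-x_v}{t_v-s_v}, \frac{e(u_i)-\epsilon}{t_v-s_v}}(t_v-s_v)$ lies within $\epsilon/k$ of $\Theta(e, u_i)$. Since $\sup_{G_i} e \geq e(u_i) > e(u_i) - \epsilon$, the subbase element $\cB_{G_i, e(u_i) - \epsilon}$ of $\tau_h$ is an open neighborhood of $e$, so there exists $\delta_0 = \delta_0(\epsilon) > 0$ with $B_{\delta_0}(e) \subseteq \bigcap_{i=1}^k \cB_{G_i, e(u_i) - \epsilon}$. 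Hence, for every $\delta < \delta_0$, the event $\bc{T_n \in B_\delta(e)}$ forces $\sup_{G_i} T_n > e(u_i) - \epsilon$ for each $i$.

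The map $v \mapsto T_n(v)$ is constant on cells of $G_i$ determined by the floors and ceilings of $n v$, and therefore takes at most $N_i(n) = O(n^4)$ distinct values; pick a representative $v_{i,j}$ from each such cell. The preceding event then yields $T_n(v_{i, j_i}) > e(u_i) - \epsilon$ for some index tuple $(j_1, \ldots, j_k)$. A union bound combined with the independence of $\bc{T_n(v_{i, j_i})}_{i=1}^k$ (ensured by (a)) gives
\[
  \P{T_n \in B_\delta(e)} \leq \sum_{j_1, \ldots, j_k} \prod_{i=1}^k \P{T_n(v_{i, j_i}) > e(u_i) - \epsilon}.
\]
Applying Property \ref{A2} to each factor, after the standard stationarity-and-scaling rewrite of $T_n(v)$ as a rescaled point-to-point last passage time along a fixed direction, and invoking (b), yields for $n$ large
\[
  \P{T_n(v_{i,j_i}) > e(u_i) - \epsilon} \leq \exp\br{-n\br{\Theta(e, u_i) - \epsilon/k}}.
\]
Multiplying the $k$ factors and absorbing the polynomial prefactor $\prod_i N_i(n) \leq n^{4k} = e^{o(n)}$ gives $\P{T_n \in B_\delta(e)} \leq \exp\br{-n(I(e) - 2\epsilon) + o(n)}$. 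Taking $\limsup_n$, then $\delta \downarrow 0$, and finally $\epsilon \downarrow 0$ delivers the claim.

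The main obstacle is converting the abstract closeness $\bd(T_n, e) < \delta$ into usable pointwise lower bounds on $T_n$; this is handled via the explicit subbase for $\tau_h$ rather than the non-quantitative Remark \ref{rmk: hypo-bound}, since the $\cB_{G, a}$ subbase elements are precisely what captures the ``realizability'' half of hypo-convergence. A secondary subtlety is that $k$ may grow as $\epsilon \downarrow 0$, which is why the per-piece continuity error must be squeezed down to $\epsilon/k$ by shrinking the diameters of the $G_i$, keeping the total continuity error linear in $\epsilon$.
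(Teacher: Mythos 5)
Your proof is correct and follows the same skeleton as the paper's: saturate $I(e)=\Theta(e)$ by a finite $\triangledown$-disjoint family, convert hypo-closeness of $T_n$ to $e$ into pointwise lower bounds, and finish with Property~\ref{A2} and the independence of $T_n$ across disjoint points. The difference lies in how the middle step is made rigorous. The paper invokes lower semi-continuity of $\Theta$ to get $\Theta(T_n)\geq\Theta(e)-\epsilon/2$ on the event and then picks disjoint $u_i$ with $\sum_i\Theta(T_n,u_i)\geq\Theta(e)-\epsilon$; but those $u_i$ depend on the sample $T_n$, so the concluding probability bound is asserted rather than reduced to a fixed finite family of events, and an implicit discretization is being used without being spelled out. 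You fix the $u_i$ depending only on $e$, pass to the subbase open sets $\{f:\sup_{G_i}f>e(u_i)-\epsilon\}$, and discretize each $G_i$ into $O(n^4)$ cells on which $T_n$ is constant, supplying the union bound; the $n^{4k}$ prefactor is harmless at speed $n$. This is a legitimate and careful completion of the argument. A leaner alternative, perhaps closer to what the paper intends, is to use Remark~\ref{rmk: hypo-bound} directly: with $u_i$ fixed as above, set $u_i':=u_i+(0,-\epsilon,0,\epsilon)$ (still pairwise $\triangledown$-disjoint for small $\epsilon$); then for $\delta$ small enough, $\bd(T_n,e)<\delta$ forces $T_n(u_i')\geq e(u_i)-\epsilon$ for every $i$ simultaneously, so one applies Property~\ref{A2} and independence at the deterministic points $u_i'$ with no union bound at all and concludes by continuity of $J$.
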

\begin{proof}
	Fix an $\epsilon >0$. By lower semi-continuity of $\Theta$, we can choose $\delta$ small enough such that
	\[\Theta(e') \geq \Theta(e) -\epsilon/2,\]
	for all $e' \in B_\delta(e)$. Let $T_n \in B_\delta(e)$, we can now choose finitely many $u_i \in \oR, u_i \triangledown u_j$, for $i \neq j$ such that 
	\[\sum_i \Theta(T_n, u_i) \geq \Theta(e) -\epsilon.\]
	By the independence property of $T_n$, the probability of this event is upper bounded by 
	\[\exp(-n(\Theta(e) -\epsilon)).\]
	Since, the argument works for all $\epsilon >0$, we have that
	\[
		\lim_{\delta \downarrow 0} \limsup_{n \to \infty} \frac{1}{n} \log \P{T_n \in B_\delta(e)} \leq - \Theta(e).
	\]
	We now apply Lemma \ref{lemma: equiv} to get the desired inequality.
\end{proof}

The proof of the upper bound now follows immediately from these lemmas. 
\subsection*{Proof of Upper Bound}
	Fix any closed set $C \subseteq \cE$ and let $\delta, m >0$. By Lemma \ref{lemma: smallballs}, for every $e \in C \cap \cD_m$ we can find an open ball $B_e$ centered at $e$ such that
	\begin{equation} \label{eq: ubp}
		\limsup_{n \to \infty} \frac{1}{n} \log \P{T_n \in B_e} \leq - I(e) +\delta.
	\end{equation}
	By Lemma \ref{lemma: Dm cpt}, we have that $C \cap \cD_m$ is compact, so it is covered by a finite collection $\cO$ of such balls. Let $r$ be the distance between the compact set $C \cap \cD_m$ and the closed set $\br{\bigcup \cO}^c$. Thus,
	\[
		C \subset \br{\bigcup \cO} \cup \bc{e \in \cE : \bd(e, \cD_m) > r/2}.
	\] 
	Now by \eqref{eq: ubp} and Lemma \ref{lemma: Dm bnd}, we have that
	\[
		\limsup_{n \to \infty} \frac{1}{n} \log \P{T_n \in C} \leq -\min \br{\inf_{e \in C} I(e) -\delta, m}.
	\]
	Taking $m \to \infty$ and $\delta \to 0$ completes the proof.

\section{Proof of Lower Bound} \label{sec: pf-lb}
The next item on our agenda will be to prove the corresponding lower bound. Towards this purpose, we shall first prove the bound for certain subsets of $\cE$. Given a metric $e$, we define the cone with apex $e$ as
\[\ce := \bc{e' \in \cE : e' \geq e}.\]
\begin{theorem} \label{thm: cone}
	Let $e$ be a finite planted network metric, i.e. it has a finite complete network $\Gamma$. Then, for $\delta >0$
	\[\liminf_{n \to \infty} \frac{1}{n} \log \P{T_n \in B_\delta(\ce)} \geq - I(e).\]
\end{theorem}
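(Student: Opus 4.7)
The strategy is to build a single event for $T_n$, of probability roughly $\exp(-nI(e))$, on which $T_n$ is forced into $B_\delta(\ce)$. The construction is driven by the finite complete network $\Gamma=\{\gamma_1,\dots,\gamma_k\}$ associated with the planted network metric $e$ via Proposition \ref{prop: e-structure}. Fix $\epsilon>0$ and use the definition of $I(e,\gamma_j)$ together with finiteness of $\Gamma$ to pick, for each $j$, a finite collection $\cP_j\in\cC_{[s_j,t_j]}$ of disjoint closed sub-intervals of mesh at most $\epsilon$ such that
\[
\sum_{j=1}^k \sum_{[a,b]\in\cP_j}\Theta\bigl(e,(\bar\gamma_j(a);\bar\gamma_j(b))\bigr)\ \geq\ I(e)-\epsilon.
\]
Because the intervals within each $\cP_j$ are disjoint, and the finitely many paths in $\Gamma$ are pairwise disjoint compact curves (hence positively $d^1$-separated), the resulting collection of endpoint pairs $\{u_{j,i}\}$ is pairwise $\triangledown$-disjoint once $\epsilon$ is small enough.

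Set $A_n:=\bigcap_{j,i}\{T_n(u_{j,i})\geq e(u_{j,i})-\epsilon\}$. The $\triangledown$-disjointness makes the passage times $T_n(u_{j,i})$ mutually independent, so applying Property \ref{A2} to each factor (using continuity of $J$ to absorb the $\epsilon$ shift into an $O(\epsilon)$ correction) yields $\P{A_n}\geq \exp(-nI(e)-nC\epsilon+o(n))$. Intersecting $A_n$ with a high-probability event $B_n:=\{T_n\geq d-\epsilon\text{ on a fixed large compact }K\}$ coming from Property \ref{A1} does not affect this probability bound.

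The geometric heart of the proof is showing that on $A_n\cap B_n$, the metric $T_n$ lies in $B_\delta(\ce)$. Using Proposition \ref{prop: e-structure}, every value $e(p;q)$ is a supremum over path concatenations of $(e^0\vee d)$-pieces, where $e^0$ records $e$-lengths along $\Gamma$. The reverse triangle inequality for $T_n$, combined with $A_n$, dominates the $e^0$-pieces, while $B_n$ dominates the $d$-pieces, so after refining $\cP_j$ finely enough that the upper semi-continuity of $e$ bridges any remaining gaps between sub-intervals, we obtain $T_n(p;q)\geq e(p;q)-C\epsilon$ for every $(p;q)\in K$. An explicit $e'\in\ce$ hypo-close to $T_n$ can then be constructed as the planted network metric with network $\Gamma$ whose segment-lengths come from $T_n$ rather than $e$, combined structurally via Proposition \ref{prop: e-structure} with $\max(T_n,d)$ on complementary regions. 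By construction $e'\geq e$, while the near-equality $T_n\geq e-C\epsilon$ forces $\abs{e'-T_n}\leq C\epsilon$ on $K$, giving $\bd(T_n,e')<\delta$ once $\epsilon$ is small.

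The main technical obstacle is this last geometric step: producing $e'\in\cE$ that dominates $e$ while staying hypo-close to $T_n$. The pointwise near-domination $T_n\geq e-C\epsilon$ on $K$ is essential, since without it the ``natural'' candidate for $e'$ would be pulled substantially above $T_n$ to bridge gaps, destroying hypo-closeness. Given this near-domination, any reasonable realization of the ``smallest element of $\cE$ above both $T_n$ and $e$'' remains within $O(\epsilon)$ of $T_n$, closing the argument after letting $\epsilon\downarrow 0$.
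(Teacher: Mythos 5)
Your proposal takes a genuinely different route from the paper, but it has a real gap at the step you yourself flag as the ``main technical obstacle.'' The paper does \emph{not} attempt to build an explicit $e' \in \ce$ hypo-close to $T_n$. Instead, it writes $\ce = \bigcap_{r,u} G(u,e-r)$ with $G(u,e)=\{e' : e'(u)\ge e(u)\}$, notes that $K_{2m}\cap B_\delta(\ce)^c$ is compact, and extracts a \emph{finite} subcover of its cover by the open sets $G(u,e-r)^c$. This yields $B_\delta(\ce)\supset K_{2m}\cap\bigcap_{u\in Q}G(u,e-r)$ for a \emph{finite} set $Q$, so the whole geometric question ``when does $T_n$ land in $B_\delta(\ce)$?'' collapses to finitely many one-dimensional inequalities $T_n(u)\ge e(u)-r$, plus the already-known bound on $\P{T_n\notin K_{2m}}$. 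Only then does the paper deploy the network $\Gamma$: for each $u\in Q$ it chooses a concatenation along $\Gamma$ with total slack $r$ (not per-segment slack), shifts endpoints by Lemma \ref{lemma: atoms} to dodge atoms, and uses the reversed triangle inequality together with FKG (because segments coming from different $u\in Q$ may overlap) and Property \ref{A2}. Disjointness of $V'$ then bounds $\sum_{v\in V'}\Theta(e,v)\le I(e)=m$.

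The gaps in your version are concrete. First, your event $A_n$ controls $T_n$ only on the closed intervals of $\cP_j$; on the complementary gaps you only have $T_n\ge d-\epsilon$ from $B_n$, while $e^0$ there can strictly exceed $d$ by an amount that does not vanish just because the gaps are short (an atom or even just nonzero $e-d$ on a short gap is enough). You cannot simply ``refine $\cP_j$ finely enough,'' because $\cP_j$ consists of \emph{disjoint closed} intervals, so gaps are forced, and $I(e,\gamma_j)$ is defined through a $\sup$ over such partitions followed by $\inf$ over mesh; you need a genuinely separate argument to bound the $e$-mass on the gaps. Second, the per-interval slack $\epsilon$ in $A_n$ accumulates: with mesh $\le\epsilon$ there are $\Theta(1/\epsilon)$ intervals contributing to any concatenation of macroscopic time-span, so the total deficit is $O(1)$, not $O(\epsilon)$. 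You would need to drop the uniform $\epsilon$ and either use no slack (relying on the $o(1)$ already in Property~\ref{A2}) or make the slack proportional to interval length — this is a fixable but real bookkeeping error. Third, and most seriously, the claim that ``any reasonable realization of the smallest element of $\cE$ above both $T_n$ and $e$ remains within $O(\epsilon)$ of $T_n$'' is asserted but not proved, and it is not obvious: bridging $T_n$ up to dominate $e$ at \emph{all} pairs $(p;q)$ in $\oR$ (not just on the compact $K$) while staying hypo-close to $T_n$ is exactly the kind of global problem the paper's compactness step is designed to avoid. As written, the proof is incomplete at the step that actually delivers $T_n\in B_\delta(\ce)$.

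One feature of your approach that would be a genuine simplification if the geometric step could be made rigorous: since you use a single partition per curve, your collection $\{u_{j,i}\}$ is genuinely $\triangledown$-disjoint, so independence replaces the FKG inequality that the paper needs (because in the paper the segments $V_u$ for different $u\in Q$ may overlap on $\Gamma$).
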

\begin{proof}
	Assume that $I(e) = m < \infty$, otherwise there is nothing to prove. The idea of the proof is to reduce the problem to having control over finitely many values of $T_n$. We can write
	\begin{align*}
		\ce = \bigcap_{r \in (0, \infty), u \in \oR} G(u, e-r), && G(u,e) := \bc{e' \in \cE : e'(u) \geq e(u)}.
	\end{align*}
	The sets $G(u,e)$ are closed under the hypograph topology. Thus,
	\begin{align*}
		\bar{B}_{\delta}(K_{2m}) \cap B_{\delta}(\ce)^c \subset \ce^c = \bigcup_{r \in (0, \infty), u \in \oR} G(u, e-r)^c,
	\end{align*}
	is an open cover, where $K_{2m}$ is as defined in Section \ref{sec: pf-ub} and $\bar{B}_{\delta}(K_{2m})$ represents the closed ball $\bc{e \in \cE : d(e, K_{2m}) \leq \delta}$. Since $\cE$ is compact and the L.H.S.\, is a closed set, we can find a finite subcover $\bc{G(u, e-r_u)^c: u \in Q}$. Let $r = (\delta \wedge \min_{u \in Q} r_u)$, then $\bc{G(u, e-r)^c: u \in Q}$ is also a subcover, giving us 

	\[B_{\delta}(\ce) \supset \bar{B}_{\delta}(K_{2m}) \cap \bigcap_{u \in Q} G(u, e- r).\]  

	Now, by Lemma \ref{lemma: kmub} we have that for $n$ large enough
	\[
		\P{T_n \notin \bar{B}_{\delta}(K_{2m})} \leq \exp \br{-(2m-o(1))n}.
	\]
	Hence, it suffices to show that
	\begin{equation}\label{eq: conemain}
	\P{T_n \in \bigcap_{u \in Q} G(u, e- r)} \geq \exp \br{-n(m + o(1))}.
	\end{equation}
	We now fix a finite $e-$complete network $\Gamma$. Since $\Gamma$ is a finite disjoint collection of closed curves, there exists $\tau>0$ such that for any two distinct curves $\gamma, \gamma' \in \Gamma$, we have that 
	\begin{equation} \label{eq: cones1} 
		d^1(\gamma', \gamma) > \tau.
	\end{equation}
	Let $S = \bigcup_{(x,s;y,t) \in Q} \bc{s,t},$ the set containing all time coordinates in $Q$. By Proposition \ref{prop: e-structure}, we can find a partition $\cP = \bc{t_0 < t_1 <\cdots < t_k}$ containing $S$ so that the following holds. First, $t_{i+1} - t_i \leq \tau$ for all $i$. Second, for all $u =(x, t_i; y, t_j) \in Q$ there exists points $z_k \in \bR^2$ with time components $t_k$ such that $(x, t_i) = z_i, z_{i+1}, \cdots, z_j = (y, t_j)$ and
	\begin{align*}
		(e^0 \vee d)(z_i, z_{i+1}) + \sum_{k = i+1}^j (e^0 \vee d)(z_k^+, z_{k+1}) \geq e(u) - r/2.
	\end{align*}
	Here the function $e^0$ is defined as in Proposition \ref{prop: e-structure}. Using Lemma \ref{lemma: atoms}, we can pick $z_k' \in \vee_{z_k} \setminus \bc{z_k}$ so that
	\begin{align*}
		\sum_{v \in V_u} (e^0 \vee d)(v) \geq e(u) - r &&\mathrm{where}&& V_u =\bc{(z_k', z_{k+1}) \:|\: i \leq k < j}.
	\end{align*}
	Since $Q$ is a finite set, we can impose that $z_k'$ have time component $t_k'$ which is independent of $u$. Let $V = \cup_{u \in Q} V_u $ and apply the reversed triangle inequality for $T_n$ to get that
	\[
		\P{T_n \in \bigcap_{u \in Q} G(u, e- r)} \geq \P{\bigcap_{v \in V}\bc{T_n(v) \geq (e^0 \vee d)(v)}}.
	\]
	The FKG inequality tells us that,
	\begin{align*}
		\P{\bigcap_{v \in V}\bc{T_n(v) \geq (e^0 \vee d)(v)}}&\geq \prod_{v \in V} \P{T_n(v) \geq (e^0 \vee d)(v)} \\
		&= \prod_{\substack{v \in V,\\ e^0(v) \geq d(v)}} \P{T_n(v) \geq e^0(v)} \prod_{\substack{v \in V,\\ e^0(v) < d(v)}} \P{T_n(v) \geq d(v)}.
	\end{align*}
	By Property \ref{A2}, the second term will make a contribution of order $\exp (-n o(1))$. Thus, we only need to control the first term to complete our claim. Notice that the first term is the same as taking the product over $V' = \bc{(x,s;y,t) \in V : \exists\: \gamma \in \Gamma \;\mathrm{such}\; \mathrm{that}\; \gamma(s) = x \;\mathrm{and}\; \gamma(t) = y}$. By applying Property \ref{A2} and using the definition of $\Theta$, we get that
	\[
		\prod_{v \in V, e^0(v) \geq d(v)} \P{T_n(v) \geq e^0(v)}\geq \exp \br{-n \sum_{v \in V'} (\Theta(e, v) + o(1))}. 
	\]	
	Now for all $(x,s;y,t) \in V'$, we have that $t-s \leq \tau$. Equation \eqref{eq: cones1} thus forces $V'$ to be a collection of disjoint points. Thus, by the equivalence of $\Theta$ and $I$ (see Lemma \ref{lemma: equiv}), we have that
	\[\P{\bigcap_{v \in V}\bc{T_n(v) \geq (e^0 \vee d)(v)}} \geq \exp \br{-n(m + o(1))}.\]
\end{proof}

\subsection*{Proof of Lower Bound}
Let $O$ be open in $\cE$, if $O$ does not contain any finite rate metric then there is nothing to prove. Otherwise, let $e_0 \in O$ such that $I(e_0) < \infty$. By Corollary \ref{cor: metric-approx}, we can find a finite planted network metric $e$ such that $e \in O$ and $I(e)$ is arbitrarily close to $I(e_0)$. Thus, it suffices to show the following bound for any finite planted network metric $e \in O$,
\[
	\liminf_n \frac{1}{n} \log \P{T_n \in O} \geq -I(e).
\]
We bound $\P{T_n \in O}$ below by 
\[
	\P{T_n \in O \cap B_\delta(\ce)} = \P{T_n \in B_\delta(\ce)} - \P{T_n \in B_\delta(\ce) \cap O^c}.
\] 
In view of Theorem \ref{thm: cone}, it suffices to show that for some $\delta >0$,
\[
	\limsup_n \frac{1}{n} \log \P{T_n \in B_\delta(\ce) \cap O^c} \leq -I(e). 	
\]
The closure of $B_\delta(\ce) \cap O^c$ is contained in $B_{2\delta}(\ce) \cap O^c$. By the large deviation upper bound, we just need to check that
\[
	s = \sup _{\delta >0} \inf_{e' \in B_{2\delta}(\ce) \cap O^c} I(e') > I(e).
\]
If $s = \infty$, there is nothing to prove. Otherwise, we can find $e_n \in B_{1/n}(\ce)\cap O^c$ with $I(e_n) \to s$. The sublevel sets of $I(e)$ are compact, so $e_n$ has a subsequential limit $e^* \in \ce \cap O^c$. Thus, $e^* \geq e$ and $e^* \neq e$. By Corollary \ref{cor: monotone}, we have that $I(e^*) > I(e)$. At the same time, by the lower semi-continuity of $I$, we have that and $s =\lim I(e_n) \geq I(e^*)$.

\section{Large Deviations of Geodesics} \label{sec: geo}
In this section, we shall provide a proof of Theorem \ref{thm: geodesics}. Fix $u =(p;q) = (x,s;y,t) \in \oR$ and let $\gamma_n$ denote the almost surely unique geodesic of $T_n$ with endpoints $u$. For a path $\gamma$ in $e$ we can define the weight function $w :[s,t] \to \bR^{\ge 0}$ as 
\[
 	w(r) = \abs{\gamma_{[s,r]}}_e \mathrm{for}\; r \in [s,t].
\]

We say that $p \:\|\: q$ if $(p,q)=(x,s;y,t) \in \oR$ and $\abs{x-y} = t-s$. Let $\cG$ denote the set of all curves in $\oR$ with endpoints $u$ and notate
\[
	\cG_b := \bc{f \in C([s,t]) : \exists r \in (s,t) \text{ such that } p \:\|\: (f(r),r) \text{ or } (f(r),r) \:\|\: q.}
\]
It would be nice to think of Theorem \ref{thm: geodesics} as an application of the contraction principle to Theorem \ref{thm: ldp}. The next lemma tells us that if finite rate metrics hypo-converge to another finite rate metric then their geodesics might not converge to a geodesic of the limiting metric. Thus, making it slightly trickier to make continuity based arguments. However, this shall not be a big deterrent in using the idea of contraction principle.

\begin{lemma} \label{lemma: gdtech}
	Let $e_n$ be a convergent sequence in $\cD$, converging to $e \in \cD$ and let $f_n$ be $e_n$-geodesics with endpoints $u$ such that $f_n$ converges uniformly to $f$. Then one of the following must happen
	\begin{enumerate}
		\item $f$ is an $e$-geodesic with endpoints $u$.
		\item All $e$-geodesics with endpoints $u$ are in $\cG_b$.
	\end{enumerate} 
	Furthermore, $\exists e' \in \cD_u(f)$ such that $e' \leq e$.
\end{lemma}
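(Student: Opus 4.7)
The plan is to compare the $e$-length of $f$ against the limits of the $e_n$-distances along $f_n$, exploit hypo-convergence to pin down both sides, and then construct $e'$ via a planted-network scheme along $f$. I would first show that $|f|_e \geq \limsup_n e_n(u)$. Since $f_n$ is an $e_n$-geodesic, $|f_n|_{e_n, \cP} = e_n(u)$ for every partition $\cP = \{s = r_0 < \cdots < r_k = t\}$ of $[s,t]$. Picking $\cP$ whose interior times avoid the countably many atoms of $e$ and of each $e_n$ (possible by Lemma~\ref{lemma: ctble}), the $^+$ in the definition of $|f_n|_{e_n, \cP}$ drops out, as in the argument used in Lemma~\ref{lemma: max}. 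Applying \ref{hypo1} to each term $e_n(\bar f_n(r_i), \bar f_n(r_{i+1}))$ via $\bar f_n(r_i) \to \bar f(r_i)$, and summing, yields $|f|_{e, \cP} \geq \limsup_n e_n(u)$. Refining to atom-avoiding partitions (which are cofinal under refinement) and infimizing gives the bound; trivially $|f|_e \leq e(u)$.

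Next I would show that if some $e$-geodesic $g$ lies outside $\cG_b$, then $\liminf_n e_n(u) \geq e(u)$, which combined with the above forces $|f|_e = e(u)$ and puts us in case~1. Fix $\epsilon > 0$ and a fine atom-avoiding partition $\cP$ so that $\sum_i e(\bar g(r_i), \bar g(r_{i+1})) \geq e(u) - \epsilon$ (using that $g$ is an $e$-geodesic). For each segment $(\bar g(r_i), \bar g(r_{i+1}))$, \ref{hypo2} furnishes sequences $u_i^n \to (\bar g(r_i), \bar g(r_{i+1}))$ with $e_n(u_i^n) \to e(\bar g(r_i), \bar g(r_{i+1}))$. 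The hypothesis $g \notin \cG_b$ is crucial here: it gives a positive slack between the intermediate points of $g$ and the cone boundaries, which allows me to choose the approximating $u_i^n$ so that consecutive pairs share intermediate endpoints and concatenate into a valid chain from $p$ to $q$ in $\oR$. The generalized triangle inequality for $e_n$ then gives $e_n(u) \geq \sum_i e_n(u_i^n)$, and letting $n \to \infty$ followed by $\epsilon \to 0$ yields $\liminf e_n(u) \geq e(u)$. Otherwise, by definition every $e$-geodesic with endpoints $u$ lies in $\cG_b$, which is case~2.

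For the ``furthermore'' clause, I would construct $e'$ by a planted-network scheme along $f$. Passing to a subsequence so that $e_n(u) \to c$, a diagonal extraction over a countable dense set of times $D \subset [s,t]$ produces limits $\ell(r, r') := \lim_n e_n(\bar f_n(r), \bar f_n(r'))$ for $r < r'$ in $D$. The geodesic property of $f_n$ combined with the metric composition law for $e_n$ gives additivity of $\ell$ along $f$, which extends by monotonicity and upper semi-continuity to all $r < r'$ in $[s,t]$. Setting $e^0(p;q) = \ell(r, r')$ when $(p, q) = (\bar f(r), \bar f(r'))$ and $0$ otherwise, I apply the Proposition~\ref{prop: e-structure}-style construction with $e^0 \vee d$ to produce $e'$. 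The resulting $e'$ lies in $\cE$ by construction, has $f$ as a geodesic with weight $\ell(s,t) = c \leq e(u)$, and satisfies $e' \leq e$ since $\ell \leq e$ (by \ref{hypo1}) and $d \leq e$ (by $d$-dominance). The principal obstacle lies in the second step: synchronizing the $u_i^n$ at shared endpoints while preserving the hypo-convergent approximation requires a careful use of the interior condition $g \notin \cG_b$ to avoid pinching against the cone boundaries.
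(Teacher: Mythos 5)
Your opening bound $\abs{f}_e \geq \limsup_n e_n(u)$ is exactly the paper's first step, and your attention to atom-avoiding partitions (so the $^+$ in $\abs{\cdot}_{e,\cP}$ can be dropped) is a reasonable clarification of a point the paper leaves implicit. The two remaining steps, however, take routes that are genuinely different from the paper's, and each carries a gap that the paper's choices are engineered to avoid.

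For the dichotomy, the paper never decomposes the interval: it applies Remark~\ref{rmk: hypo-bound} to the single pair $u_\epsilon = (x,s+\epsilon;y,t-\epsilon)$, obtains $e(u_\epsilon) \leq \limsup_n e_n(u) + \epsilon \leq \abs{f}_e + \epsilon$, and sends $\epsilon \to 0$. Your alternative --- applying \ref{hypo2} to each segment of a partition along a geodesic $g \notin \cG_b$ and concatenating via the reversed triangle inequality for $e_n$ --- requires producing a chain $p \leq p_1^n,\; q_i^n \leq p_{i+1}^n,\; q_k^n \leq q$. But \ref{hypo2} hands you each approximating sequence independently, with no control over which side of $\bar g(r_{i+1})$ the points $q_i^n$ and $p_{i+1}^n$ land on; and if you instead force them into the cone by a Remark~\ref{rmk: hypo-bound}-type shift of size $\epsilon$, the shifted segments overlap in time and the chain fails. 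The interior slack furnished by $g \notin \cG_b$ is not uniform near $r=s$ and $r=t$ (it degenerates as $r$ approaches the endpoints), so it does not rescue the concatenation there. You correctly flag this synchronization as the principal obstacle, but as written the step does not close; the paper's single-$u_\epsilon$ trick avoids ever having to chain approximations.

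For the ``furthermore,'' the constructions differ conceptually. You extract a diagonal limit $\ell(r,r') = \lim_n e_n(\bar f_n(r), \bar f_n(r'))$ and plant $\ell \vee d$ along $f$. The paper instead takes an $e$-complete network $\Gamma$ (available because $e \in \cD$), intersects it with the graph $\gamma_0$ of $f$ to get a sub-network $\Gamma' = \bc{\gamma \cap \gamma_0 : \gamma \in \Gamma}$ supported on $f$, and applies the Proposition~\ref{prop: e-structure} planting with the inherited $e$-weights. This makes $e' \leq e$, $\Theta(e') < \infty$, and $I(e') \leq I(e)$ essentially immediate, and because all positive $e^0$ mass lies on $f$, the path $f$ attains $e'(u)$. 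Your limit $\ell$ must separately be shown to be (super)additive along $f$, regular enough for the planted metric to be upper semi-continuous and lie in $\cE$, and large enough that $f$ rather than a competing straight-line path attains $e'(u)$. These are plausible but left unverified, and they are precisely what inheriting $e^0$ from the known metric $e$ makes automatic. Finally, the paper disposes of the degenerate case $\abs{x-y} = t-s$ at the outset (the cone collapses to a single path, so Lemma~\ref{lemma: gdf} gives the claim directly); your proposal does not address it, and in particular your ``furthermore'' construction still needs to be checked there.
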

\begin{proof}
	If $\abs{x-y} =t-s$ then there is only one path in $\oR$ with endpoints $u$ and the claim follows from the existence of geodesics (Lemma \ref{lemma: gdf}). So assume otherwise and fix a finite partition $\cP$ of $[s,t]$. Hypo-convergence of $e_n$ implies that
	\[
		\limsup_n \abs{f_n}_{e_n} \leq \limsup_n \abs{f_n}_{e_n, \cP} \leq \abs{f}_{e, \cP}.
	\]
	We can now take an infimum over all such partitions in the R.H.S. which gives us that $\limsup_n \abs{f_n}_{e_n} \leq \abs{f}_{e}$. Since, $f_n$ are $e_n-$geodesics, we have that
	\[
		\limsup_n e_n(u) \leq \abs{f}_{e}.
	\]
	By our assumptions on $u$, we have that for $\epsilon>0$ small enough the point $u_\epsilon=(x, s+\epsilon; y, t-\epsilon) \in \oR$ and by Remark \ref{rmk: hypo-bound} we have that
	\[
		e(u_\epsilon) \leq \abs{f}_{e}.
	\]
	Letting $\epsilon \to 0$, we can conclude that $\abs{f}_e \geq \abs{g}_e$ for all $g \in \cG \setminus \cG_b$ proving the first part of the claim. 

	Since $e \in \cD$, it must have a complete network say $\Gamma$. The graph of function $f$ defines a geodesic curve in $\oR$, call it $\gamma_0$. We can intersect the network $\Gamma$ with $\gamma_0$, and construct a new network $\Gamma' =\bc{\gamma \cap \gamma_0\: |\: \gamma \in \Gamma}$. By Proposition \ref{prop: e-structure} we can construct a finite rate metric $e'$ using the network $\Gamma'$ and $e$ such that $e' \leq e$ and $I(e') \leq I(e)$. Moreover, by construction $e' \in \cD_u(f)$.
\end{proof}
\begin{corollary} \label{cor: gdappli}
	The infimum in \eqref{def: Ju} is always achieved whenever it is finite. Moreover, any metric achieving the infimum is given by plating the path $f$ with some weight function $w$. Thus for $f \in \cG$, we can realize $\cJ_u(f)$ as the value of the optimization problem:
	\begin{align*}
		\text{minimize} && &\cR(f, w) :=  \sup_{\cP \in \cC_{[s,t]}} \sum_{[s_{i}, t_i] \in \cP} J_{m_i} \br{\frac{w(t_i) -w(s_i)}{t_i -s_i}}(t_i-s_i), \\
		\text{subject to} && &w(b) - w(a) \geq d((f(a), a), (f(b), b)), \;\;\text{for all }s \leq a \leq b \leq t,
	\end{align*}
	where $m_i = (f(t_i)-f(s_i))/(t_i-s_i)$, and we borrow other notations from Section \ref{sec: ntwrk}.
\end{corollary}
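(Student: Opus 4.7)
The plan is to prove the corollary by combining Lemma \ref{lemma: gdtech} with the structural results of Section \ref{sec: ntwrk}, in three steps.

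First, to show the infimum in \eqref{def: Ju} is attained when finite, I would pick a minimizing sequence $(e_n) \subset \cD_u(f)$ with $I(e_n) \to \cJ_u(f)$. All the $e_n$ lie in a common sublevel set $\cD_m$, which is $\tau_h$-compact by Lemma \ref{lemma: Dm cpt}, so one may extract a convergent subsequence with limit $e \in \cD$. The constant sequence $f_n \equiv f$ furnishes $e_n$-geodesics converging uniformly to $f$, so Lemma \ref{lemma: gdtech} produces a metric $e' \in \cD_u(f)$ with $e' \leq e$. Lower semi-continuity of $I$ (Corollary \ref{cor: lsc}) together with monotonicity (Corollary \ref{cor: monotone}) give $I(e') \leq I(e) \leq \cJ_u(f)$, so $e'$ attains the infimum.

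Second, I would characterize any minimizer $e^*$ as a plating of $f$. Let $\gamma_0$ denote the graph of $f$ inside $\oR$ and set $w(r) := e^*((x,s);(f(r),r))$. Applying the construction of Proposition \ref{prop: e-structure} to the one-path network $\{\gamma_0\}$ with weights inherited from $e^*$ produces a planted metric $\hat e$ with $d \leq \hat e \leq e^*$; a direct check using the metric composition law for $e^*$ shows that $f$ remains an $\hat e$-geodesic with endpoints $u$, so $\hat e \in \cD_u(f)$. Minimality of $e^*$ gives $I(\hat e) \geq I(e^*)$, while $\hat e \leq e^*$ gives $I(\hat e) \leq I(e^*)$ by Corollary \ref{cor: monotone}; the strict part of that corollary then forces $\hat e = e^*$. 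Hence every minimizer is entirely encoded by its weight function $w$ along $f$.

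Third, the feasibility condition and the objective match the stated optimization. The constraint $\hat e \geq d$ reads exactly $w(b) - w(a) \geq d((f(a),a),(f(b),b))$ for $s \leq a \leq b \leq t$. For the objective, since $\hat e = d$ off $\gamma_0$ and straight-line $d$-segments carry zero $\Theta$-rate, the supremum over networks defining $I(\hat e)$ is saturated by the singleton $\{\gamma_0\}$, giving $I(\hat e) = I(\hat e, \gamma_0) = \cR(f,w)$.

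The principal technical obstacle is to verify that the nested quantity $\inf_{\epsilon > 0} \sup_{m(\cP) \leq \epsilon}$ defining $I(\hat e, \gamma_0)$ coincides with the unrestricted supremum $\sup_{\cP \in \cC_{[s,t]}}$ appearing in $\cR(f,w)$. The key tool is joint convexity of $J$ (Property \ref{A2}(i)): its perspective $(t-s)\,J_m\bigl(\tfrac{\cdot}{t-s}\bigr)$ is convex, so Jensen's inequality shows that subdividing any interval of the partition only increases the corresponding $J$-contribution. Hence the supremum is attained at arbitrarily fine partitions, the mesh restriction $m(\cP) \leq \epsilon$ becomes irrelevant, and the two quantities agree with the common fine-partition limit.
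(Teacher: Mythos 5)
Your proposal is correct, and its overall architecture matches the paper's (very terse) proof: the existence of a minimizer is handled identically, via a minimizing sequence in $\cD_u(f)$, compactness of sublevel sets from Lemma \ref{lemma: Dm cpt}, lower semi-continuity of $I$, and then Lemma \ref{lemma: gdtech} to drop down to a metric $e'$ that is still in $\cD_u(f)$. The paper dispenses with the rest of the corollary by saying ``the variational follows by calculating the rate of such a minimizing metric,'' which is exactly the content you spell out in steps 2--4. Two of your observations are genuine elaborations worth keeping: identifying every minimizer as a plating via strict monotonicity (Corollary \ref{cor: monotone}), and resolving the mismatch between $\inf_{\epsilon>0}\sup_{m(\cP)\le\epsilon}$ and the unrestricted $\sup$ via the convexity of the perspective of $J$, which the paper never addresses explicitly.

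One imprecision worth fixing: the claim that ``$\hat e = d$ off $\gamma_0$'' is not literally true. The construction of Proposition \ref{prop: e-structure} takes a supremum over all paths $\pi$ joining the endpoints, and a pair $(p;q)$ with both endpoints off $\gamma_0$ can still have $\hat e(p;q) > d(p;q)$ whenever the cone between $p$ and $q$ meets $\gamma_0$, since the path $\pi$ can detour onto $\gamma_0$ and collect $e^0$-weight there. What actually makes the singleton $\{\gamma_0\}$ saturate the supremum is the mesh restriction in the definition of $I(\hat e,\gamma)$: for a network path $\gamma$ staying a positive distance from $\gamma_0$ on some time interval, once the mesh is small enough the cones of the partition intervals on that stretch are disjoint from $\gamma_0$, forcing $\hat e = d$ on those small pairs and killing their $\Theta$-contribution; the contributions that survive the $\epsilon\downarrow 0$ limit are localized on $\gamma_0$, and disjointness of the network then bounds them by $I(\hat e,\gamma_0)$. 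With that correction your step 3 is sound, and the argument as a whole is a faithful (and more complete) version of what the paper intends.
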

\begin{proof}
	Let $f$ be a curve in $\oR$ with endpoints $u$ such that $\cJ_u(f)$ is finite. There exists a sequence $e_n \in \cD_u(f)$ such that $I(e_n) \downarrow \cJ_u(f)$. Thus, $I(e_n)$ is bounded and by Lemma \ref{lemma: Dm cpt} there is a convergent subsequence of $e_n$, say with limit point $e$. By lower semi-continuity of $I$, we have that
	$I(e) \leq \liminf_n I(e_n) =\cJ_u(f)$. By Lemma \ref{lemma: gdtech}, we have a metric $e' \in \cD_u(f)$ such that $I(e') \leq I(e)$. Thus, the infimum is achieved at $e'$. The variational follows by calculating the rate of such a minimizing metric.
\end{proof}

\begin{remark} \label{rmk: sec9}
	Fix endpoints $u \in \oR$ and a weight function $w:[s,t] \to \bR^{\ge 0}$. Then $\cR(f, w)$ is minimized by the straight line joining the endpoints. This follows simply from the convexity of the point-to-point rate function $J$. If $J$ is strictly convex in the parameter $\gamma$, then the minimizer is unique. Thus, on the event that the last passage time has a large positive deviation, the geodesic is localized around the straight line connecting the endpoints. Such a localization result was shown for Poisson LPP in \cite[Theorem 3]{DZ99} by explicit computation of the corresponding $J$ function. To see that the result holds for Exponential/Geometric LPP, one can simply take the derivative of $J$ (which is explicitly known) with respect to $\gamma$ and see that it is not a constant function for any fixed $x$. As we know that $J$ is convex, this means that it must be strictly convex in $\gamma$. 
\end{remark}

We now use the above ideas to prove a couple of technical lemmas that will be useful for proving the large deviation lower bound.

\begin{lemma} \label{lemma: uniqgd}
	Let $f \in \cG$ such that $\cJ_u(f) < \alpha < \infty$, then there exists $e \in \cD$ with $I(e) < \alpha$, such that $f$ is the unique $e-$geodesic with endpoints $u$.
\end{lemma}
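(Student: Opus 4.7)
The plan is to take the minimizer of the variational problem defining $\cJ_u(f)$ (provided by Corollary \ref{cor: gdappli}) and perturb it slightly so that $f$ becomes the unique geodesic while the rate stays strictly below $\alpha$. First I would invoke Corollary \ref{cor: gdappli} to obtain a planted network metric $e_0 \in \cD_u(f)$ (in the sense of Proposition \ref{prop: e-structure}), built on the single curve $\bar{f} = (f(\cdot), \cdot)$ with weight function $w_0(r) := |\bar{f}_{[s,r]}|_{e_0}$, satisfying $I(e_0) = \cR(f, w_0) = \cJ_u(f) < \alpha$ and $w_0(b) - w_0(a) \geq d(\bar{f}(a); \bar{f}(b))$ for all $s \leq a < b \leq t$. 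I would then set $w_\epsilon(r) := w_0(r) + \epsilon(r - s)$, which strictly dominates the $d$-constraint, i.e.\ $w_\epsilon(b) - w_\epsilon(a) > d(\bar{f}(a); \bar{f}(b))$ for all $a < b$, and let $e_\epsilon$ be the corresponding planted network metric.

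For uniqueness, I would appeal to Lemma \ref{lemma: gd1}: for each $r \in (s, t)$ the rightmost $e_\epsilon$-geodesic is the unique maximizer of $z \mapsto e_\epsilon(p; z, r) + e_\epsilon((z, r)^+; q)$. Using the planting formula of Proposition \ref{prop: e-structure}, this sum equals $e_\epsilon(p;q) + \sup_{c_1, c_2} \bigl[d(\bar{f}(c_1); (z,r)) + d((z,r); \bar{f}(c_2)) - (w_\epsilon(c_2) - w_\epsilon(c_1))\bigr]$, where the supremum is over $s \leq c_1 \leq r \leq c_2 \leq t$ with $\bar{f}(c_1) \leq (z,r) \leq \bar{f}(c_2)$. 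For $z = f(r)$ the bracket vanishes at $c_1 = c_2 = r$, while for $z \neq f(r)$ we must have $c_1 < r < c_2$; the reversed triangle inequality for $d$ bounds $d(\bar{f}(c_1); (z,r)) + d((z,r); \bar{f}(c_2)) \leq d(\bar{f}(c_1); \bar{f}(c_2))$, strictly less than $w_\epsilon(c_2) - w_\epsilon(c_1)$ by the strict inequality above, so a compactness argument forces the supremum to be strictly negative. Hence $z = f(r)$ is the strict maximizer and $f$ is the unique $e_\epsilon$-geodesic between $p$ and $q$.

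For the rate bound, I would show $I(e_\epsilon) = \cR(f, w_\epsilon)$: any disjoint point $u' = (p';q')$ whose endpoints lie off $\bar{f}$ satisfies $e_\epsilon(u') = d(u')$, contributing zero to $\Theta(e_\epsilon)$ by Property \ref{A2}(ii); for a point whose $e_\epsilon$-geodesic passes through a subsegment of $\bar{f}$, Lemma \ref{lemma: ugd} lets one replace it by that subsegment without decreasing the contribution. Thus disjoint-point sums reduce to partitions of $\bar{f}$, yielding $\Theta(e_\epsilon) = \cR(f, w_\epsilon)$ by Lemma \ref{lemma: equiv}. Continuity of $J$ (Property \ref{A2}(i)) then gives $\cR(f, w_\epsilon) \to \cR(f, w_0) = \cJ_u(f) < \alpha$ as $\epsilon \downarrow 0$, so $I(e_\epsilon) < \alpha$ for $\epsilon$ small. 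The main obstacle will be making this last convergence quantitative: the sup over partitions need not commute with $\epsilon \to 0$, but convexity of $J$ in its second argument provides the per-interval bound $J_m(v + \epsilon) - J_m(v) \leq \epsilon \bigl[J_m(v+1) - J_m(v)\bigr]$, reducing the task to a uniform-in-$\cP$ control of $\sum J_m(v_i + 1)(t_i - s_i)$, which one can handle by iterating with a sufficiently small perturbation scale so that the comparison weight stays inside the finite-rate regime of $J$.
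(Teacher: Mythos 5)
Your overall strategy is the same as the paper's: take the minimizing planted network metric $e_0$ from Corollary~\ref{cor: gdappli}, perturb the weight function by a small linear term $\epsilon(z-s)$, plant $f$ with the perturbed weight, and control the rate via convexity of $J$. Your uniqueness argument, while more elaborate than the paper's one-line remark, is sound in spirit; the paper's implicit reasoning is simply that any admissible path $\pi$ spends time $T_\pi\le t-s$ on $\bar f$, so $|\pi|_{e_\epsilon}=|\pi|_{e_0}+\epsilon T_\pi\le e_0(u)+\epsilon(t-s)=|f|_{e_\epsilon}$ with equality iff $\pi=f$.

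Two points in your rate-bound step need correction. First, the claim that a disjoint pair $(p';q')$ with endpoints off $\bar f$ has $e_\epsilon(p';q')=d(p';q')$ is false in general: the planting formula of Proposition~\ref{prop: e-structure} lets the $e_\epsilon$-optimal path enter and leave $\bar f$, so $e_\epsilon$ can strictly exceed $d$ even between off-curve endpoints. The correct justification for $I(e_\epsilon)=\cR(f,w_\epsilon)$ is that $\{f\}$ is an $e_\epsilon$-complete network by construction. Second, your closing workaround --- ``stays inside the finite-rate regime of $J$'' --- has no content here because $J$ is everywhere finite under the unbounded-support assumption. The route you should take is the one your per-interval bound already implies: summing $J_{m}(v+\epsilon)\le(1-\epsilon)J_m(v)+\epsilon J_m(v+1)$ over intervals and taking suprema gives $\cR(f,w_\epsilon)\le(1-\epsilon)\cR(f,w_0)+\epsilon\,\cR(f,w_1)$, which is exactly the paper's convexity interpolation (with $\kappa=1$, $\lambda=\epsilon$); one then only needs $\cR(f,w_\kappa)<\infty$ for a single $\kappa>0$, which the paper asserts (somewhat tersely) via the finiteness of $\Theta(e_0)$. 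Trying instead to prove $\cR(f,w_\epsilon)\to\cR(f,w_0)$ by continuity is both harder and unnecessary.
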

\begin{proof}
	Since $\cJ_u(f) < \infty$, we can find a planted network metric $e_f$ such that $I(e_f) = \cJ_u(f)$. Furthermore, this metric is generated by planting $f$ with an appropriate weight function $w$.	Let $\kappa >0$, and consider the weight function $w_\kappa(z) = w(z) +\kappa(z-s)$ for $z \in[s,t]$. We can now construct a new metric $e$ by planting the path $f$ with weight function $w_\kappa$, using Proposition \ref{prop: e-structure}. The metric $e$ must also have finite rate for some $\kappa$ small enough as $\Theta(e_f)$ would be unbounded otherwise.
	
	Convexity of the point to point rate function implies that $\cR$ is convex. Therefore,
	\[
		\cR(f, (1- \lambda) w + \lambda w_\kappa) \leq (1- \lambda) \cR(f,  w) + \lambda \cR(f, w_\kappa) = (1- \lambda) I(e_f) + \lambda I(e). 
	\]
	Thus, for $\lambda$ small enough, we have that $\cR(f, w_{\kappa \lambda}) = \cR(f, (1- \lambda) w + \lambda w_\kappa) < \alpha$. Thus, we can choose $\kappa$ small enough so that $I(e) < \alpha$.

	Since $f$ was an $e_f-$geodesic, $f$ must be the unique $e-$geodesic with endpoints $u$.
\end{proof}

\begin{lemma} \label{lemma: perturb}
	Let $f \in \cG_b$ such that $\cJ_u(f) < \alpha < \infty$ and $\abs{x-y} \neq t-s$, then for all $\epsilon>0$ there exists $f' \in \cG \setminus \cG_b$ with $\cJ_u(f') < \alpha$ and $\norm{f-f'}_u < \epsilon$. 
\end{lemma}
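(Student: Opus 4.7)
The plan is to perturb $f$ toward the straight-line interpolant connecting its endpoints and then compensate the weight function so the admissibility constraint still holds. Since $u \in \oR$ forces $\abs{x-y} \leq t-s$, the hypothesis $\abs{x-y} \neq t-s$ gives $\abs{x-y} < t-s$ strictly. Set $\ell(r) := x + (y-x)(r-s)/(t-s)$ and $f_\lambda := (1-\lambda) f + \lambda \ell$ for $\lambda \in (0,1)$. Since $\abs{\ell(r)-x} = \abs{y-x}(r-s)/(t-s) < r-s$ and symmetrically $\abs{\ell(r)-y} < t-r$ at interior times, $\ell \in \cG \setminus \cG_b$; the defining inequalities of $\cG$ being linear in $f$, and $\ell$ satisfying their strict versions, $f_\lambda$ inherits the strict inequalities, so $f_\lambda \in \cG \setminus \cG_b$ with $\norm{f - f_\lambda}_u = \lambda \norm{f - \ell}_u < \epsilon$ for $\lambda$ small.

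The remaining task is to force $\cJ_u(f_\lambda) < \alpha$ for $\lambda$ small. By Corollary \ref{cor: gdappli} pick a weight $w$ on $[s,t]$ with $\cR(f, w) = \cJ_u(f) < \alpha$, and set $w_\kappa(r) := w(r) + \kappa(r-s)$ with $\kappa > 0$ to be chosen. By homogeneity of the shape function (Property \ref{A1}), $d((p,a),(q,b)) = (b-a) F((q-p)/(b-a), 1)$, so the desired constraint $w_\kappa(b) - w_\kappa(a) \geq d((f_\lambda(a), a), (f_\lambda(b), b))$ reduces, via the constraint already satisfied by $w$, to
\[
	F\br{\tfrac{f_\lambda(b) - f_\lambda(a)}{b-a}, 1} - F\br{\tfrac{f(b) - f(a)}{b-a}, 1} \leq \kappa.
\]
Since the two arguments differ by at most $2\lambda$ and both lie in the compact set $\bc{(m, 1) : \abs{m} \leq 1} \subset \vee$, uniform continuity of $F$ on that set (Property \ref{A1}) provides a choice $\kappa = \kappa(\lambda) \downarrow 0$ validating the constraint for all $a \leq b$ simultaneously.

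For the rate bound, I would take any partition $\cP = \bc{s = t_0 < \cdots < t_k = t}$ and write $m_i, m_i^\lambda$ for the slopes of $f, f_\lambda$ on $[t_{i-1}, t_i]$ and $\mu_i := (w(t_i) - w(t_{i-1}))/(t_i - t_{i-1})$; note $\abs{m_i^\lambda - m_i} \leq 2\lambda$. Joint convexity of $J$ combined with Property \ref{A2}(i) yields local Lipschitz continuity of $J$ on compact subsets of the interior of $\bc{J < \infty}$. Provided the pairs $(m_i, \mu_i)$ and $(m_i^\lambda, \mu_i + \kappa(\lambda))$ all lie in a single such compact set, a uniform Lipschitz constant $L$ gives, after summing the termwise bound and taking the supremum over $\cP$,
\[
	\cR(f_\lambda, w_{\kappa(\lambda)}) \leq \cR(f, w) + L \br{2\lambda + \kappa(\lambda)}(t-s) < \alpha
\]
for $\lambda$ sufficiently small. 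The planted network construction of Proposition \ref{prop: e-structure} then produces a metric realising this bound, whence $\cJ_u(f_\lambda) \leq \cR(f_\lambda, w_{\kappa(\lambda)}) < \alpha$.

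The main obstacle is the uniform control of the weight increments $\mu_i$ across all partitions, which is needed so that a single Lipschitz constant for $J$ serves throughout. This reduces to showing that the minimiser $w$ in Corollary \ref{cor: gdappli} can be chosen Lipschitz in $r$; I would argue this directly from the variational characterisation, using joint convexity of $J$ and strict monotonicity of $J$ in its second argument (Property \ref{A2}(ii)) to rule out minimisers with unbounded local variation, since averaging a weight function with a large local derivative over a short interval strictly decreases $\cR$.
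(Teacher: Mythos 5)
Your setup matches the paper's: the path $f_\lambda := (1-\lambda) f + \lambda \ell$, the verification that $f_\lambda \in \cG \setminus \cG_b$ with $\norm{f - f_\lambda}_u < \epsilon$, and the perturbed weight $w_\kappa(r) = w(r) + \kappa(r-s)$ with $\kappa(\lambda) \downarrow 0$ chosen so that the admissibility constraint persists (the paper calls this $w_\delta$ and derives it from the same uniform continuity of $F$). Where you diverge is in bounding $\cR(f_\lambda, w_{\kappa(\lambda)})$ below $\alpha$, and there your argument has a genuine gap that you correctly flag yourself. To run the Lipschitz estimate $\cR(f_\lambda, w_{\kappa}) \leq \cR(f, w) + L(2\lambda + \kappa)(t-s)$ uniformly over all partitions, you need the weight increments $\mu_i = (w(t_i) - w(t_{i-1}))/(t_i - t_{i-1})$ to stay in a fixed compact set on which $J$ is Lipschitz; this amounts to a Lipschitz bound on the minimizing weight $w$. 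Nothing in Corollary \ref{cor: gdappli} supplies that, and your proposed fix (that averaging out a large local derivative strictly decreases $\cR$, using monotonicity of $J$) is a plausible heuristic but is not proved and is not obviously quantitative enough to give a single Lipschitz constant that works after the perturbation. There is a further delicacy you do not address: even with $w$ Lipschitz, the slope arguments $m_i^\lambda$ of $f_\lambda$ approach $\pm 1$ near the endpoints, which lies on the boundary of the domain of $J$, so a single interior Lipschitz constant is not automatic.

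The paper sidesteps all of this by exploiting convexity of $\cR$ in the path variable $f$ directly, rather than differentiability of $J$. Since $\cR(\cdot, w_\delta)$ is convex and is minimized over paths by the straight line $\ell$ (Remark \ref{rmk: sec9}), one has
\[
	\cR(f_\lambda, w_\delta) \le (1-\lambda)\,\cR(f, w_\delta) + \lambda\,\cR(\ell, w_\delta) \le \cR(f, w_\delta),
\]
so the path perturbation can only decrease the rate for free, and it remains only to control the weight perturbation $\cR(f, w_\delta) < \alpha$, which is exactly the convexity-in-$w$ argument already made in Lemma \ref{lemma: uniqgd}. That route requires no Lipschitz estimate on $J$ and no regularity of the minimizing $w$ at all. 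I recommend you replace the Lipschitz step with this convexity argument; the rest of your proof then goes through.
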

\begin{proof}
	Let $\ell$ denote the function which defines the straight line between the endpoints $u$. For $\lambda \in (0,1)$, we define $f_\lambda := \lambda \ell + (1-\lambda) f$. We can pick $\lambda$ such that the ${\norm{f_\lambda - f}}_u$ is arbitrary small and by construction, $f_\lambda \notin \cG_b$. Furthermore for any interval $[a,b] \subseteq [s,t]$,
	\[
		\abs{\frac{f_\lambda(b) -f_\lambda(a)}{b-a} - \frac{f(b) -f(a)}{b-a}} \leq 2 \lambda. 
	\] 
	By the sublinearity and continuity of the shape function $F$ (Property \ref{A1}), we can claim that $F(\gamma t, t)/t$ is uniformly continuous in $\gamma$. Thus for any interval $[a,b] \subseteq [s,t]$,
	\begin{equation} \label{eq: perturb1}
		d((f_\lambda(a), a), (f_\lambda(b), b)) \leq d((f(a), a), (f(b), b)) + \delta(b-a),
	\end{equation}
	for some $\delta$ depending only on $\lambda$ which goes to zero with $\lambda$.

	Let $w$ be the weight function satisfying the variational problem in Corollary \ref{cor: gdappli} corresponding to $f$. By \eqref{eq: perturb1}, the weight function $w_\delta(z) := w(z) + \delta(z-s),$ for $z \in [s,t]$ will satisfy the constraint in the variational problem corresponding to $f_\lambda$. Convexity of $\cR$ and Remark \ref{rmk: sec9} implies that
	\begin{align*}
		\cR(f_\lambda, w_\delta) &\leq (1-\lambda) \cR(f, w_\delta) + \lambda \cR(\ell, w_\delta)  \leq \cR(f, w_\delta).
	\end{align*}
	We can now bound the right-hand side by $\alpha$ as in the proof of Lemma \ref{lemma: uniqgd} by choosing $\lambda$ to be sufficiently small.
\end{proof}

We now present a proof for Theorem \ref{thm: geodesics}.

\begin{proof}[Proof of Theorem \ref{thm: geodesics}]
	
	\underline{\textbf{Lower semi-continuity:}} Consider $f_n \to f$ uniformly and suppose that limit infimum of $\cJ_u(f_n)$ is finite, otherwise there is nothing to prove. Following Corollary \ref{cor: gdappli} we can find $e_n \in \cD$ achieving the infimum \eqref{def: Ju} for each $f_n$. Since, $I$ is a good rate function, there is a subsequential limit $e$ of $e_n$, and $I(e) \leq \liminf_n I(e_n) = \liminf_n \cJ_u(f_n)$. By Lemma \ref{lemma: gdtech}, $\exists e' \in \cD_u(f)$ such that $I(e') \leq I(e)$. Thus,
	\[
		\cJ_u(f) \leq I(e') \leq \liminf_n \cJ_u(f_n).
	\]

	\underline{\textbf{Compact sublevel sets:}} Lower semi-continuity of the rate function implies that the sublevel sets are closed. Notice that all functions in $C([s,t])$ with finite rate must define a curve in $\oR$. This forces the functions to be Lipschitz continuous with Lipschitz constant one. At the same time these functions are uniformly bounded with the bound only depending on the endpoints $u$. Compactness now follows from the Arzela-Ascoli theorem. 

	In particular, we have shown that $\cG$ is pre-compact.

	\underline{\textbf{Large deviation upper bound:}} Let $A$ be a closed subset of $C([s,t])$, and let $S_A \subset \cD$ be the set of metrics that have a geodesic with endpoints $u$ which lies in $A$. Then, by Theorem \ref{thm: ldp}
	\begin{equation} \label{eq: sec9i}
		\P{\gamma_n \in A} = \P{T_n \in S_A} \leq \exp\br{(o(1) - \inf_{\bar S_A} I)n}.
	\end{equation} 
	The upper bound follows if for every $e \in \bar S_A$ with $I(e) \leq \alpha < \infty$, we can find $f \in A$ with $\cJ_u(f) \leq \alpha$. There exists a sequence $(e_n) \subset S_A$ such that $e_n \to e$. Let $f_n \in A$ be $e_n$-geodesics with endpoints $u$, since $\cG$ is precompact $f_n$ must have a subsequential limit $f \in A$. By Lemma \ref{lemma: gdtech} we can construct $e' \in \cD_u(f)$ such that $I(e') \leq I(e)$. Hence, 
	\[
		\P{T_n \in S_A} \leq \exp\br{(o(1) - \inf_{S_A } I)n} = \exp\br{(o(1) - \inf_{A} \cJ_u)n}. 
	\]

	\underline{\textbf{Large deviation lower bound:}} Let $A \subseteq C([s,t])$ be open and assume that $\cJ_u(f) < \infty$ for some $f \in A$. Following \eqref{eq: sec9i} and Theorem \ref{thm: ldp}, we have that 
	\[
		\liminf_n \frac{1}{n} \log \P{\gamma_n \in A} \geq -\inf_{S_A^\circ} I,
	\]
	where we can take the interior relative to $\cD$, as $I = \infty$ outside it. Hence, the desired lower bound follows if for every $f \in A$ with $\cJ_u(f) < \alpha < \infty$, we can find $e \in S_A^\circ$ with $I(e) \leq \alpha$. We divide the argument into two cases, for $f \in \cG_b$ and otherwise.
	
	Let $f \notin \cG_b$, by Lemma \ref{lemma: uniqgd} there exists  $e \in \cD$ with $I(e) < \alpha$ and $f$ as the unique geodesic across the endpoints $u$. We claim that $e \in S_A^\circ$. Let $e_n$ be a sequence in $\cD$ converging to $e$ and let $g_n$ be $e_n-$geodesics with endpoints $u$. Since $g_n$ define curves in $\oR$, they must have a subsequential limit $g$. By Lemma \ref{lemma: gdtech}, $g$ must be an $e-$geodesic with endpoints $u$. Uniqueness forces $f=g$ and thus $g_n \in A$ for infinitely many $n$ since $A$ is open. This forces $e_n \in S_A$ for infinitely many $n$ and thus $e$ must be in  $S_A^\circ$.
	
	If $u$ is such that $\abs{x-y} = t-s$, then the above arguments hold for all $f \in \cG$. Otherwise for $f \in \cG_b$, we can apply Lemma \ref{lemma: perturb} to reduce the problem to the previous case which completes our proof.	
\end{proof}
We can now apply Theorem \ref{thm: geodesics} to prove Corollary \ref{cor: corner}. The argument is mostly computational, hence we provide a sketch of the proof and leave it to the readers to fill in the details. All notations are borrowed from the statement of Corollary \ref{cor: corner}. 

\begin{proof}[Sketch of Proof of Corollary \ref{cor: corner}]
	Let $A \subset C([0,1])$ denote the set of all functions which passes through the point $a$. Since $\cJ_u$ is a good rate function, it must attain a minimizer $f$ with weight function $w$ in $\bar A$. The journey from $t =0$ to $t=1/2$ is identical to the journey from $t=1/2$ to $t=1$, hence $f$ and $w$ can  be taken to be symmetric about $t= 1/2$. Let $\ell$ denote the function corresponding to joining the endpoint to $a$ by straight lines. Let $T = w(1) -w(0)$ and $w_\ell$ denote the weight configuration which distributes $T$ uniformly on $[0,1]$. Convexity of the point-to-point rate function tells us that the cost of planting $\ell$ with $w_\ell$ is less than that of planting it with $w$. Combining this with Remark \ref{rmk: sec9}, we observe that taking $\ell$ with weight configuration $w_\ell$ will have less cost than planting $f$ with weights $w$. Furthermore, $\ell$ is a geodesic with weights $w_\ell$ if $T \geq d(u)$ and the cost will be minimized at $T= d(u)$. This tells us that
	$$\inf_{\bar A} \cJ_u = \cJ_u(\ell) = J(m, d(u)),$$
	where $m$ is the slope of the line connecting the origin and $a$. This directly gives us the required upper bound. For the lower bound, it is enough to note that the set of all functions which pass through $a+(0, \epsilon)$ is in the interior of $A$ for all $\epsilon$ small enough. Thus, 
	\[
		\inf_{A^\circ} \cJ_u \leq \inf_\epsilon J(m + 2 \epsilon, d(u)) = J(m, d(u)), 
	\]
	where the last inequality follows by the continuity of $J$. Thus, we have the matching upper and lower bounds. 
\end{proof}
\section{Discussion on Properties} \label{sec: properties}

In this section we shall show that all the properties that we required from the LPP models in Section \ref{s: intro} can be proven for a large class of models. In particular, we aim to provide proofs and references for the properties under the assumption that the random weights $w_{ij} \sim \mu$ are non-negative, unbounded and satisfy \eqref{ass: mom}. 

\begin{proposition}[{\cite[Theorem 2.3]{Mart04}}] \label{prop: A1}
	For LPP models with i.i.d. $\mu$ weights satisfying \eqref{ass: mom}, there exists a concave continuous sublinear function $F: \vee \to [0,\infty)$, such that for $t \in \bR^{\geq 0}, \abs{x} \leq t$, 
	\[
		\lim_{n \to \infty} \frac{T\br{(0,0), (\floor{nx},\floor{nt})}}{n} = F(x,t),
	\]
	uniformly on compact sets, almost surely. Moreover, the function $F$ is strictly positive for $t >0$. 
\end{proposition}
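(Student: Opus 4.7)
The plan is to reduce the result to \cite[Theorem 2.3]{Mart04} after verifying that the moment hypothesis \eqref{ass: mom} is sufficient to invoke it, and then to establish strict positivity by a direct lower bound. I would begin by noting that \eqref{ass: mom} trivially implies $\E[w_{ij}] < \infty$, which gives the integrability needed for Kingman's subadditive ergodic theorem. For a fixed non-zero integer direction $(a,b)$ in the first quadrant, the reverse triangle inequality for $T$ makes the sequence $T((0,0),(ka,kb))$ superadditive in $k \in \bN$ (up to an $O(1)$ boundary adjustment from shifting indices between concatenated paths), so Kingman's theorem yields an a.s.\ limit $\tilde F(a,b) = \sup_k k^{-1} \E[T((0,0),(ka,kb))]$ along each such ray.

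Next I would extend $\tilde F$ to rational directions by positive homogeneity and appeal to \cite[Theorem 2.3]{Mart04}, which shows that $\tilde F$ extends to a concave continuous sublinear function $F$ on all of $\vee$ and that the a.s.\ convergence holds along every direction. Upgrading pointwise a.s.\ convergence along rational directions to uniform convergence on compact subsets of $\vee$ relies on the coordinatewise monotonicity of $T$ together with the continuity of $F$: one covers the compact set by finitely many rational directions and sandwiches any other direction between neighboring grid points, using continuity of $F$ to squeeze the errors, exactly as in the classical proof of the shape theorem.

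For the strict positivity claim my approach is direct. Since $\mu$ is a nontrivial distribution supported on the non-negative reals, $\bar w := \E[w] > 0$. For $v = (x,t) \in \vee$ with $t > 0$, the endpoint in the original coordinates is $(\lfloor n(t-x)/2 \rfloor, \lfloor n(t+x)/2 \rfloor)$, and every upright path to it contains $\sim nt$ vertices. Fixing any single such path (for example, the monotone staircase closest to the diagonal) and applying the strong law of large numbers to the i.i.d.\ weights along it yields $T_n((0,0),v) \geq (t + o(1))\bar w$ almost surely, whence $F(v) \geq t\bar w > 0$.

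The main obstacle I expect is less any single step than the careful bookkeeping when invoking Kingman's theorem: one must manage the $O(1)$ shifts between concatenated paths so that the rescaled process is genuinely superadditive and stationary, and separately one must check continuity of $F$ up to the boundary $\abs{x} = t$ of $\vee$, where the set of admissible paths degenerates and rational-direction approximations must be taken from the interior. Both delicate points are resolved in \cite{Mart04}, so the work here is largely a translation into our coordinates together with a verification that our moment hypothesis is sufficient to invoke that reference.
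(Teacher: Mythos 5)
Your proposal is correct and follows the same route as the paper, which simply cites \cite[Theorem~2.3]{Mart04} for this result without supplying an independent proof. Your sketch fleshes out the standard Kingman-plus-shape-theorem argument behind that citation, and your direct strict-positivity bound (fix a single monotone path of $\sim nt$ sites, apply SLLN to get $F(v)\geq t\,\E[w]>0$, using that $\mu$ is non-degenerate) is a valid supplement for the one clause that Martin's theorem does not state verbatim.
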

We remark that the above proposition holds for much weaker moment assumptions. In particular, for last passage percolation on $\bZ^d$, \cite{Mart04} shows that Property \ref{A1} holds if
\begin{equation} \label{ass: weak mom}
	\E{w^d (\log w)^{1+\epsilon}} < \infty,
\end{equation}
for some $\epsilon >0$.

The next Proposition will tell us that Property \ref{A2} holds under the assumption \ref{ass: mom}. Before we state the result, we recall that under the Assumption \ref{ass: mom}, one can define the Cramer's rate function $I_c$ corresponding to $\mu$ as the Legendre transform of the logarithmic moment generating function. Furthermore, the assumption that $\mu$ has unbounded support ensures that $I_c$ is finite on $\bR$.

\begin{proposition}[{\cite[Theorem 3.2]{GS13}}, {\cite[Theorem 5.2]{Kes86}}] \label{prop: A2}
	Consider an LPP models with i.i.d. $\mu$ weights satisfying \eqref{ass: mom} and support of $\mu$ is unbounded. For $\abs{\gamma} \leq 1$, there exists a function $J$ such that
	\[
		\lim_{n \to \infty} \frac{1}{n} \log \P{T_n \br{(0,0), (\gamma , 1)} \geq  x} = -J(\gamma, x).
	\] 
	Furthermore, the function $J$ can be computed as
	\begin{equation} \label{eq: J}
		-J(\gamma, x) = \sup_n \frac{1}{n} \log \P{T_n \br{(0,0), ({\gamma }, 1)} \geq x},
	\end{equation}
	and $J$ satisfies the following properties:
	\begin{enumerate}[(i)]
		\item The function $J$ is jointly convex and continuous.
		\item $J(\gamma, x) =0$ for $x \leq F(\gamma, 1)$.
		\item The function $J_\gamma = J(\gamma, \cdot)$ is strictly increasing for $x \geq F(\gamma, 1)$.
	\end{enumerate}
\end{proposition}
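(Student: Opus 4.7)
The plan is to prove the five items in the statement (existence of the limit, the supremum formula, joint convexity, joint continuity, and the precise description of the zero set and strict monotonicity of $J_\gamma$) by combining Fekete's superadditive lemma on log-probabilities, a moment generating function bound for finiteness, a mixed-slope coupling for convexity, and an exponential upper-tail concentration theorem in the spirit of Kesten.

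First, fix $\gamma$ and $x$ and set $a_n=\log\P{T((0,0),n(\gamma,1))\geq nx}$ (with the floor convention of the paper). The LPP superadditivity $T(p,r)\geq T(p,q)+T(q,r)$ for $p\leq q\leq r$, together with the independence of LPP times over spatially disjoint regions, yields
\[
\bc{T((0,0),n_1(\gamma,1))\geq n_1 x}\cap\bc{\text{independent translate of the }n_2\text{-event}}\subseteq\bc{T((0,0),(n_1+n_2)(\gamma,1))\geq(n_1+n_2)x},
\]
up to boundary, floor, and shared-endpoint corrections that are $O(1)$ in $n$. Thus $a_{n_1+n_2}\geq a_{n_1}+a_{n_2}-O(1)$, and Fekete's lemma (in its robust form that tolerates a bounded additive error) delivers both the existence of $\lim n^{-1}a_n$ and its equality with $\sup_n n^{-1}a_n$; I define $-J(\gamma,x)$ to be this common value. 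Item (ii), $J(\gamma,x)=0$ for $x\leq F(\gamma,1)$, follows immediately from Property~\ref{A1}: $T_n\to F(\gamma,1)$ a.s.\ forces $\P{T_n\geq x}\to 1$ below $F(\gamma,1)$, with the boundary handled by the supremum representation. Finiteness of $J$ on $[-1,1]\times\bR$ follows from the MGF hypothesis: a union bound over the $\binom{2n}{n}$ up-right paths of length $O(n)$ plus Chernoff gives $\P{T_n\geq x}\leq\exp\br{-n(t_0 x-c)}$ for some $t_0>0$, so $J(\gamma,x)\geq t_0 x - c$.

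Convexity is the same superadditivity argument applied across distinct slopes and levels. For rational $\lambda=n_1/(n_1+n_2)$, independence plus superadditivity gives
\[
\P{T_{n_1+n_2}(0,\lambda(\gamma_1,1)+(1-\lambda)(\gamma_2,1))\geq\lambda x_1+(1-\lambda)x_2}\geq\P{T_{n_1}(0,(\gamma_1,1))\geq x_1}\cdot\P{T_{n_2}(0,(\gamma_2,1))\geq x_2}.
\]
Logging and passing to the limit gives joint convexity on a dense subset of $[-1,1]\times\bR$, which upgrades to everywhere via the Fekete--supremum representation. Joint continuity of the finite-valued convex function $J$ is automatic on the interior, and at the boundary $\gamma=\pm 1$ the LPP reduces to a sum of weights along a forced path and $J_\gamma$ is the classical Cram\'er rate function, which is continuous under \eqref{ass: mom}.

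The main obstacle is strict monotonicity past $F(\gamma,1)$. Convexity and non-negativity force the zero-set of $J_\gamma$ to be a half-line $(-\infty,x_\ast]$, and once $x_\ast=F(\gamma,1)$, convexity plus $J_\gamma(x)\to\infty$ (from the Chernoff lower bound) yield strict monotonicity on $[F(\gamma,1),\infty)$. Hence the remaining task is to show $\P{T_n\geq x}\leq e^{-\delta_x n}$ for every $x>F(\gamma,1)$ and some $\delta_x>0$, which is exactly the exponential upper-tail concentration of \cite[Theorem 5.2]{Kes86}. The strategy I would follow is Kesten's truncation argument: pick a truncation level $L=L(\epsilon)$ so that the shape function $F^{(L)}$ of the model $w_{ij}\wedge L$ lies within $\epsilon$ of $F(\gamma,1)$; apply a bounded-difference Azuma/McDiarmid inequality to the truncated LPP (which enjoys bounded increments path-by-path) to obtain exponential upper-tail decay above $F(\gamma,1)+2\epsilon$; and control the contribution of weights exceeding $L$ via the MGF assumption \eqref{ass: mom} at exponential scale, using that any LPP-relevant path sees $O(n)$ vertices. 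This truncation-plus-concentration input is the genuine obstacle in the proposition --- everything else is Fekete/superadditivity book-keeping that is essentially forced by the model's structure.
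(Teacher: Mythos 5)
Your superadditivity/Fekete argument for the existence of the limit, the supremum formula, joint convexity via the mixed-slope coupling, and the reduction of strict monotonicity (iii) to exponential upper-tail concentration in the spirit of \cite[Theorem~5.2]{Kes86} all track the paper's route. But there are two genuine gaps.

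First, your finiteness argument is in the wrong direction. A union bound over the $\binom{2n}{n}$ up-right paths plus Chernoff gives $\P{T_n \geq x} \leq \exp(-n(t_0 x - c))$, i.e.\ a \emph{lower} bound $J(\gamma, x) \geq t_0 x - c$. This is consistent with $J = +\infty$ and establishes nothing about finiteness. In fact finiteness is exactly where the unbounded-support assumption on $\mu$ must enter (if the weights were bounded by $M$, then $J(\gamma, x) = \infty$ for $x$ large enough), and you never use it. The paper's argument is: along $\gamma = \pm 1$ the passage time is a deterministic-path sum of i.i.d.\ weights, so $J(\pm 1, \cdot) = I_c$, the Cram\'er rate function, which is finite on all of $\bR$ precisely because the support is unbounded; joint convexity in $\gamma$ (plus the symmetry $J(-1, \cdot) = J(1, \cdot)$) then forces $J(\gamma, \cdot) \leq I_c(\cdot) < \infty$ for all $|\gamma| \leq 1$.

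Second, the boundary continuity at $\gamma = \pm 1$ is not actually established by what you wrote. You observe that $J(1, \cdot) = I_c(\cdot)$ is continuous in $x$, and that convexity gives interior continuity, but joint continuity at a boundary point requires showing $J(\gamma_n, x_n) \to J(1, x)$ as $(\gamma_n, x_n) \to (1, x)$ with $|\gamma_n| < 1$, and convex functions can fail to be lower semi-continuous at the boundary of their domain. The paper supplies the missing lower semi-continuity via a comparison with the finite-temperature directed polymer partition function: $J(\gamma, x - c\beta^{-1}) \leq J^\beta(\gamma, \beta x) \leq J(\gamma, x)$, using the joint continuity of $J^\beta$ from \cite[Theorem~3.2]{GS13}, and then sends $\beta \to \infty$ to conclude $J(\gamma, x) \leq \liminf_n J(\gamma_n, x_n)$. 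You would need this or an alternative argument to close the boundary-continuity claim.

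Finally, on (iii), the skeleton you give (zero set of a convex nonnegative function is a half-line; strict monotonicity on $[F(\gamma,1), \infty)$ follows once $x_\ast = F(\gamma,1)$ and $J_\gamma \to \infty$) is correct, and the Kesten truncation-plus-Azuma sketch is a reasonable outline of the needed input, though the paper simply cites \cite[Theorem~5.2]{Kes86} rather than reproving it.
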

\begin{proof}
	For $\abs{\gamma} \leq 1$, the existence and convexity of the function $J(\gamma, \cdot)$ follow from the superadditivity of the sequence 
	\[
		\br{\log \P{T \br{(0,0), (\floor{\gamma n}, n)} \geq n x}}_n.
	\]
	The superadditivity also implies that we can compute $J$ as
	\[
		-J(\gamma, x) = \sup_n \frac{1}{n} \log \P{T \br{(0,0), (\floor{\gamma n}, n)} \geq nx}.
	\]
	We claim that the function $J$ is also jointly convex in $\gamma$. Let $(\gamma, x) = \alpha (\gamma_1, x_1) + (1- \alpha) (\gamma_2, x_2)$, where $\abs{\gamma_1}, \abs{\gamma_2} \leq 1$. We have that
	\begin{align*}
		\P{T \br{(0,0), (\floor{\gamma n}, n)} \geq n x} &\geq	\P{T \br{(0,0), (\floor{\gamma_1 \alpha n}, \alpha n)} \geq \alpha n  x_1} \times \\
		&\P{T \br{(0,0), (\floor{\gamma_2 (1-\alpha)n}, (1-\alpha)n)} \geq (1-\alpha)n x_2}.
	\end{align*}
	Taking the log and letting $n$ tend to infinity will prove the desired convexity claim. The convexity immediately tells us that $J$ is an upper semi-continuous function, and it is continuous on $(-1,1) \times \bR$ when it is finite. Notice that along the directions $\abs{\gamma} =1$, the rate function is simply given by the Cramer's rate function $I_c$ corresponding to $\mu$. Convexity of $J$ forces $J(\gamma, \cdot) \leq J(1, \cdot) = I_c(\cdot) < \infty$. Thus, $J$ is continuous on $(-1,1) \times \bR$.  
	
	Last passage times can be realized as a limit of the point-to-point partition function for directed polymers in the same random environment. The rate functions for these two quantities are related by the following inequalities,
	\begin{align*}
		J(\gamma, x-c\beta^-1) \leq J^\beta(\gamma, \beta x) \leq J(\gamma, x).
	\end{align*} 
	In the above equation $c$ is an absolute constant, $\beta$ is the temperature parameter for the directed polymer and $J^\beta$ represents the rate function of the partition function with origin and $n(\gamma, 1)$ as endpoints. The function $J^\beta$ is known to be continuous under the assumption \ref{ass: mom} (see \cite[Theorem 3.2]{GS13}). We now let $(\gamma_n, x_n) \to (\gamma,x)$ and $\beta \to \infty$ in the above inequality to get that
	\[
		J(\gamma, x) \leq \liminf_{n} J(\gamma_n, x_n).
	\]  
	The strong law of large number in Proposition \ref{prop: A1} implies that $J(\gamma, x) =0$ for $x < F(\gamma, 1)$ and continuity of $J$ allows us to extend it to $x \leq F(\gamma, 1)$. The claim about strictly increasing follows from \cite[Theorem 5.2]{Kes86}.
\end{proof}
We end this session with a proof of Property \ref{A3} for a general class of LPP models. It was remarked in \cite{BGS19} that one can use their results in conjunction with the proof of \cite[Theorem 5.2]{Kes86} to establish Property \ref{A3}. As the result was not stated formally, we shall instead provide an elementary proof for the property by combining their ideas.

\begin{proposition} \label{prop: A3}
	Consider an LPP model with i.i.d. weights that satisfies Property \ref{A1}. Fix $\abs{\gamma} <1$ and $x>0$, then for $n$ large enough
	\[
		\P{T_n((0,0), (\gamma , 1)) \leq F(\gamma, 1) - x} \leq \exp \br{-c_xn^2},
	\]
	for some constant $c_x>0$ depending on $x$.
\end{proposition}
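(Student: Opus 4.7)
The plan is to combine a soft truncation of the random weights with a martingale-difference concentration argument in the style of \cite[Theorem 5.2]{Kes86}, upgraded to the quadratic rate using the multi-scale independence ideas of \cite{BGS19}. Let me first flag that a naive superadditive decomposition $T((0,0), p_n) \geq \sum_{i=1}^k T(q_{i-1}, q_i)$ into $k = k(n) \to \infty$ disjoint sub-LPPs along the diagonal cannot by itself produce the $\exp(-c_x n^2)$ rate: Chernoff on the resulting i.i.d.\,sum with sub-Gaussian parameter of order $n/k$ per summand yields at best $\exp(-c_x k)$ decay, and since we need $n/k \to \infty$ for Property \ref{A1} to apply to each piece, we cannot take $k \sim n^2$. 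Hence the quadratic rate must be extracted directly at the full scale $n$.

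The first step is truncation. Under \eqref{ass: mom} with parameter $t > 0$, set $\tilde w_{ij} := w_{ij} \wedge K_n$ with $K_n = (\log n)^2$, and let $\tilde T_n$ denote the corresponding truncated passage time. The probability that any weight in the $O(n^2)$ relevant window exceeds $K_n$ is at most $n^2 \exp(-t K_n)$, which is much smaller than $\exp(-n^2)$, so it suffices to bound $\P{\tilde T_n \leq (F(\gamma,1) - x) n}$. By Property \ref{A1} together with the dominated convergence allowed by \eqref{ass: mom}, one has $\E \tilde T_n \geq (F(\gamma,1) - o(1))\, n$, so on the event of interest the truncated passage time must deviate below its mean by at least $(x/2)n$ for $n$ large.

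The core estimate needed is then
\[
\P{\tilde T_n - \E \tilde T_n \leq -\delta n} \leq \exp(-c_\delta n^2),
\]
whose proof is the main technical obstacle. The plan is to form the Doob martingale of $\tilde T_n$ in a vertex-revelation filtration and control its sum of squared conditional increments. The naive $L^\infty$ increment bound of $K_n$ applied over $O(n^2)$ vertices gives only the useless bound $\exp(-c\delta^2/K_n^2)$; one must instead exploit that the increment from a vertex $v$ is non-trivial only when $v$ lies on a near-geodesic, and — this is the Kesten--BGS input — that along $\Theta(\sqrt{n})$ roughly-disjoint transversal strips of the slab, the lower-tail event forces an independent local analogue of the same deviation. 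The hard step is this two-scale martingale estimate: combining Kesten's $\sqrt{n}$-scale Gaussian concentration with the BGS multi-channel independence gain to supply the extra factor of $n$ in the exponent. Once this is in hand, plugging in $\delta = x/2$ and adding back the negligible truncation error completes the proof.
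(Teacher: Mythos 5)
Your proposal identifies the right obstacle but leaves the key estimate unproven, and the route you sketch to fill it is considerably harder than what the paper actually does. Concretely, after observing (correctly) that a \emph{single} superadditive chain of $k$ sub-LPPs along the diagonal can only give $\exp(-c_x k)$ and that $k$ cannot exceed $O(n)$, you pivot to a Doob-martingale argument and explicitly flag the two-scale estimate ``combining Kesten's $\sqrt n$-scale Gaussian concentration with the BGS multi-channel independence gain'' as the ``main technical obstacle'' and ``the hard step''. You never carry it out, so the core inequality $\P{\tilde T_n - \E\tilde T_n \leq -\delta n}\leq\exp(-c_\delta n^2)$ is simply asserted. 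That is a genuine gap, not a modest omission: nothing in the surrounding text indicates how the martingale increments would actually be controlled to win the extra factor of $n$, and the classical vertex-revelation martingale for passage times is well known to saturate at a rate $\exp(-c n)$.

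The fix in the paper is much more elementary and does not use martingale concentration for $T_n$, vertex-revelation filtrations, or weight truncation at all. The key observation, which your opening paragraph almost reaches and then abandons, is that the lower-tail event is extremely strong: it forces \emph{every} increasing path to be short, not just one superadditive chain. The paper tiles the rectangle into a $K\times K$ grid of boxes of side $\asymp M$ (so $K=n/M$), forms the $2K-1$ diagonal chains $X_k=\sum_{i-j=k}X_{ij}$, and notes that the boxes in different chains are disjoint, so the $X_k$ are mutually independent. On $\cA$, \emph{each} $X_k$ must be $\leq n(F(\gamma,1)-x)$. Each $X_k$ is a sum of $\asymp K$ i.i.d.\ box passage times with mean $\approx (M-2)F(\gamma,1)$, and after a one-sided clip of $X_{ij}$ at its mean (which makes the clipped variable bounded by $F(\gamma,1)$ without any hypothesis beyond $X_{ij}\geq 0$) Hoeffding gives $\P{X_k\leq n(F(\gamma,1)-x)}\leq\exp(-cKx^2)$. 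Multiplying over $\asymp K$ independent chains yields $\exp(-cK^2x^3)=\exp(-c_xn^2)$. This is the ``multi-channel independence'' you gesture at, but it enters as a product over independent diagonal chains, not as a refined martingale increment bound; and no concentration input beyond Hoeffding is used.

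Two smaller issues in your writeup. First, the claim that $n^2\exp(-t(\log n)^2)\ll\exp(-n^2)$ is false; $n^2 e^{-t(\log n)^2}=n^{2-t\log n}$ decays only quasi-polynomially. This happens not to be fatal, since truncating the weights from above can only decrease $T_n$, so $\P{T_n\leq a}\leq\P{\tilde T_n\leq a}$ with no additive error; but the stated comparison should be removed. Second, even granting the truncation, you would still need $\E\tilde T_n\geq (F(\gamma,1)-o(1))n$ uniformly in the truncation level, which requires a separate argument that you do not supply. The paper sidesteps both issues by clipping the already-scaled box passage times $X_{ij}$ rather than the microscopic weights.
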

\begin{proof}
	We consider the rectangle $R$ with origin and $(\gamma n, n)$ as opposite vertices. Let $M \in \bN$ large which we shall fix later and $K =n/M$. Divide $R$ into $K^2$ identical pieces by lines of slope 1 and -1 as shown in Figure \ref{fig1}. Let us label the intersection points of these lines as
	\[
		v_{i,j} = \br{\frac{n}{2K}\br{i(1+\gamma) -j(1-\gamma)}, \frac{n}{2K}\br{i(1+\gamma) +j(1-\gamma)}}.
	\]
	
	\begin{figure} 
		\includegraphics{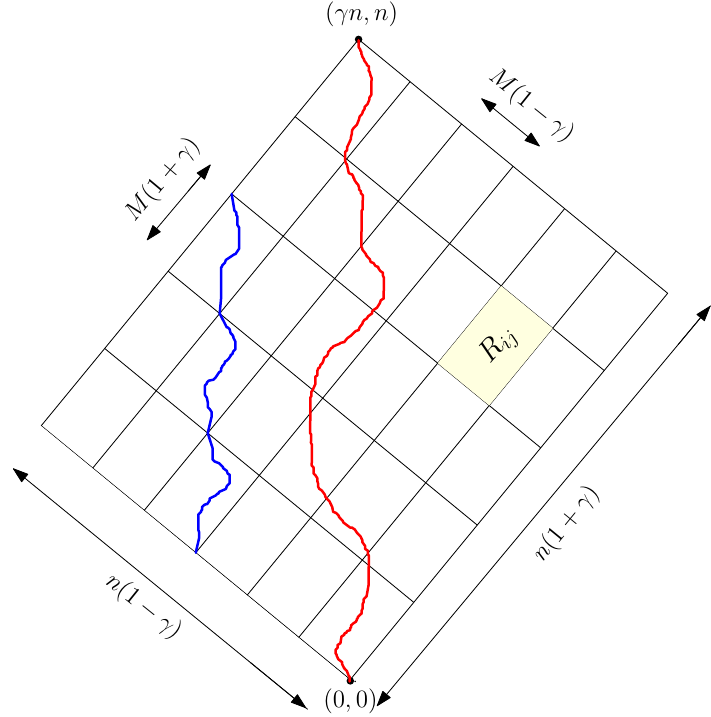}
		\caption{Illustration for the construction of Proposition \ref{prop: A3}. The blue line denotes the path corresponding to $X_k$ and the red line illustrates the geodesic.}
		\label{fig1}
	\end{figure}
	We enumerate the smaller rectangles by $\bc{R_{ij}}_{i,j=1}^K$, where $R_{ij}$ denotes the rectangle with opposite vertices $v_{i-1, j-1}$ and $v_{i,j}$. For notational convenience, we shall assume that the vertices of every rectangle $R_{ij}$ have integer coordinates. It will be clear from the proof that our arguments can be easily adapted to the case when this is not true.
	
	We denote by $X_{ij}$, the last passage time between $v_{i-1, j-1} + (0,1)$ and $v_{i,j} -(0,1)$ and let 
	\[\cA := \bc{T_n((0,0), (\gamma , 1)) \leq F(\gamma, 1) - x}.\]
	On the event $\cA$, for $\abs{k} \in \bc{0, \cdots, K-1}$, we have that
	\[
		\sum_{i-j = k} X_{i,j} \leq T((0,0), (\gamma n , n)) \leq n\br{F(\gamma, 1) - x}.
	\]
	Let us denote the sum in the L.H.S. by $X_k$. The collection of random variables $\bc{X_k}$ is independent. Thus,
	\begin{equation} \label{eq: A3con}
		\P{\cA} \leq \prod_{\abs{k} \leq K-1} \P{X_k \leq n\br{F(\gamma, 1) - x}},
	\end{equation}
	and we denote the event in the R.H.S. by $\cA_k$. Fix indices $i,j$ and let
	\[
		\tilde{X}_{ij} = \frac{-1}{M-2} \min(X_{ij} - \E{X_{ij}} , 0).
	\]
	These random variables are bounded above by $(M-2)^{-1}\E{X_{ij}}$. By Property \ref{A1} we have an $M_0$ such that for $M \geq M_0$,
	\begin{equation} \label{eq: A3i}
		\frac{1}{(M-2)} \E{X_{ij}} \in (F(\gamma, 1) - x/4, F(\gamma, 1)).
	\end{equation}
	Combined with the almost sure convergence of $X_{ij}$, we can assert that for $M$ large enough
	\[\E{\tilde X_{ij}} \leq x/4.\]
	As $X_{ij}$ are identically distributed, the above inequality holds for all indices $i$ and $j$. $X_k$ is the sum of $K -\abs{k}$ many copies of $X_{11}$ and,
	\begin{align*}
		\E{X_k} &\geq (M-2)(K- \abs{k}) (F(\gamma, 1) - x/4)\\
		& \geq n (1- 2/M)(1- \abs{k}/K) (F(\gamma, 1) - x/4) \\
		& \geq n \br{F(\gamma, 1) - x/2}.
	\end{align*} 
	Where the last inequality holds for
	$$\abs{k} \leq k_0 = K \br{\frac{x}{4F(\gamma, 1)} - 2/M}.$$
	Therefore, we can bound the probability of $\cA_k$ as
	\begin{align*}
		\P{\cA_k} &\leq \P{X_k \leq \E{X_k}- nx/2} \\
		&\leq \P{\sum_{i-j=k} (X_{ij} -\E{X_{ij}}) \leq  -nx/2}  \leq \P{\frac{1}{K} \sum_{i-j=k} \tilde{X}_{ij} \geq x/2} \leq \exp \br{- c K x^2},
	\end{align*}		
	where we have used the Azuma Hoeffding's inequality in the last step. Thus,
	\begin{equation*}
		\P{\cA} \leq \exp \br{- c K k_0x^2} \leq \exp\br{-cM^{-2} x^3 n^2},
	\end{equation*}
	for some absolute constant $c >0$. 
\end{proof}

\begin{remark}
	We have stated the proposition for last passage percolation on $\bZ^2$. However, the same arguments can be modified for higher dimensions and will give us a decay of $\exp(-c_x n^d)$ in $\bZ^d$.
\end{remark}

\section{Poisson Last Passage and Polymer Models} \label{sec: pois}
The abstract treatment for the theory of last passage models makes it much easier to extend the result to similar models. In particular, we will be reviewing the changes required to fit our framework to Poisson LPP and directed polymer models.

\subsection{Poisson Last Passage}
Consider a 2-dimensional Poisson point process on the plane $\bR^2$ with rate $\lambda =1$. Define the weight of an upright path as the number of Poisson points on it. The last passage value from $u$ to $v$ is defined as the maximal weight of an upright path with endpoints $u$ and $v$. If the points $u,v$ are such that there is no upright path between them then we set the last passage value to zero. The definition is identical to that in the lattice models. The only difference being that we no longer have to use floor/ ceiling functions as the model is defined on a continuous space. 

One can easily check that this definition satisfies the generalized triangle inequality \eqref{eq: triangle}. Furthermore, all the three properties that we required for lattice models also hold in this case. Property \ref{A1} follows form the shear invariance in the model, and Kingman's subadditivity theorem. The precise function follows from \cite{AD95} using the connection with the longest increasing subsequence in random permutations. Property \ref{A2} and Property \ref{A3} are known to be true following \cite{seppalainen1998large} and \cite{DZ99}.

Thus, all the tools we used for the large deviation principle (Theorem \ref{thm: ldp}) are available in this setting, and we can claim that Poisson LPP also satisfies the LDP. The large deviation principle for geodesics also follow from the same arguments.

\subsection{Directed Polymers}
The random environment for these models is exactly the same as what we have in the LPP case. We will be looking at the discrete lattice $\bZ^2$ and assign to each vertex an i.i.d.\ non-negative weight. We define the weight of a path $\pi$ (denoted by $\ell(\pi)$) as the sum over all the vertices on it (both endpoints included). Instead of looking over the maximal weight of an upright path, we now ask about the total weight of all upright paths between two fixed vertices. Thus, we define the partition function as
\[
	\cZ(u,v) = \sum_{pi: u \to v} \ell(\pi), 
\] 
where the sum is over all upright paths from $u$ to $v$. We define the free energy as the log of the partition function. These functions can be extended to $\bR^2$ by similar use of floor/ceiling functions as in the case of LPP times. We also do a similar rotation of the standard axes to align them with space and time coordinates. 

Fix $(p,q) = (x,s;y,t) \in \oR$, and let $r \in [s,t)$, definition of the partition function implies that
\begin{align} \label{eq: dp_triangle}
	\cZ(p,q) &\geq \max_{p \leq (z,r) \leq q} \cZ(x,s;z,r) \cZ((z,r)^+; y,t)  \notag,\\  
	\cZ(p,q) &\leq \abs{t-s} \max_{p \leq (z,r) \leq q} \cZ(x,s;z,r) \cZ((z,r)^+; y,t).
\end{align}	
We shall be interested in the scaled free energy,
\begin{equation*}
\log \cZ_n(u,v) := \frac{1}{n} \log \cZ(nu,nv).
\end{equation*}

It follows that scaled the free energy satisfies the generalized triangle inequality \eqref{eq: triangle}. Property \ref{A1} and Property \ref{A2} follow from \cite[Theorem 3.2]{GS13} under the same assumptions on vertex weights, in fact their results are what allowed us to prove Property \ref{A2} for the LPP case. Finally, Property \ref{A3} can be proven almost identically to the LPP. Thus, the basic tools that are required for our theory still hold under the same assumptions on the random weights.

One can now attempt to use the theory from Section \ref{sec: frm} and \ref{sec: ntwrk} to prove the large deviation principle. There is just one little problem, free energies do not satisfy the metric composition law exactly. Thus, in the proof of the upper bound, one cannot directly apply Corollary \ref{corollary: mcl}. However, we can show that free energies satisfy an approximate version of the metric composition law and use this property instead. 

We say that a sequence $e_n \in \cE$ satisfy an approximate metric composition law if 
\begin{align}\label{eq: app_mcl}
	e_n(p,q) =  \max_{p \leq (z,r) \leq q} e_n(x,s;z,r) + e_n((z,r)^+; y,t) + \epsilon_n,
\end{align}
for a sequence $\epsilon_n$ converging to zero. Following \eqref{eq: dp_triangle}, the scaled free energy satisfies
\begin{align*} 
	\log \cZ_n(p,q) \geq \max_{p \leq (z,r) \leq q} \log\cZ_n(x,s;z,r) + \log \cZ_n((z,r)^+; y,t),
\end{align*}
as well as 
\begin{align}
	\log \cZ_n(p,q) \leq  \max_{p \leq (z,r) \leq q} \log\cZ_n(x,s;z,r) +\log\cZ_n((z,r)^+; y,t) +\frac{1}{n} \log (n\abs{t-s}).
\end{align}
Combining the above equations shows that the scaled free energy satisfy an approximate metric composition law. We now show that a sequence satisfying \eqref{eq: app_mcl} cannot converge to a metric without the metric composition law. The proof is very similar to the proof of Lemma \ref{lemma: Dm cpt}.

\begin{proposition} \label{prop: app_mcl}
	Let $e_n \in \cE$ be a sequence of metrics that satisfy \eqref{eq: app_mcl}, $e \in \cE$ be a metric which does not satisfy Definition \ref{def: D}(iii). Then there exists $\epsilon >0$ such that for $n$ large enough $\bd(e_n, e) >\epsilon$. 
\end{proposition}
\begin{proof}
	Assume otherwise, then we have that $e_n \to e$ in the $\tau_h$ topology. Since $e$ does not satisfy the metric composition law, we can find $(p,q) = (x,s;y,t) \in \oR$ and $r \in (s,t)$ such that,
	\begin{equation}
		e(p,q) \geq \sup_{\bc{z: p < (z,r) < q}} e(p, (z,r)) + e((z,r)^+, q),
	\end{equation}
	for $ (p,q) \in \oR$, and $r \in (s,t)$. 
	
	We can furthermore assume that $(p,q)$ and $r$ are such that they satisfy the hypothesis of Lemma \ref{lemma: max}. If no such $p,q$ and $r$ exist then Lemma \ref{lemma: max} would imply that $e$ satisfies the metric composition law. Let $I_r := \bc{z: p < (z,r) < q}$ as in the proof of Lemma \ref{lemma: max}. Let $p_n = (x, s_n)$ and $q_n = (y, t_n)$ where $(s_n), (t_n) \subset R_e$, $s_n \uparrow s$ and $t_n \downarrow t$. As in the proof of Lemma \ref{Arz}, can choose the sequence in such a way so that
	\[
		e(p,q) = \lim_n e_n(p_n, q_n).
	\]
	By Lemma \ref{lemma: max}, we can pick $z_n$ such that
	\[
		e_n(p_n, q_n) = e_n(p_n; (z_n, r)) + e_n((z_n,r)^+; q_n) + 2\epsilon_n.
	\]
	If needed, we can pass through a subsequence so that $z_n$ is convergent and let $z_0$ be the limit. Note that $z_0$ must belong to the interval $I_r$. Hence,
	\begin{align*}
		e(p, q) &= \limsup_n e_n(p_n, q_n) \\
		&= \limsup_n e_n(p_n, (z_n,r)) + e_n((z_n,r)^+, q_n) + 2\epsilon_n \\
		&\leq \limsup_n e_n(p_n, (z_n,r)) + e_n((z_n,r), q_n) + 2\epsilon_n\\
		&\leq e(p, (z_0,r)) + e((z_0,r), q) \qquad (\Rightarrow \Leftarrow ). 
	\end{align*}
	Thus, $e$ must satisfy the metric composition law. This completes the proof of our claim.
\end{proof}

One can now prove the large deviation upper bound for $\log \mathcal{Z}_n$ by the exact same arguments as in Section \ref{sec: pf-ub}. The only difference is in the proof of Lemma \ref{lemma: Dm bnd}, one know argues using Proposition \ref{prop: app_mcl}, instead of Corollary \ref{corollary: mcl}. The proof of the lower bound remains unchanged and thus the same large deviation theory works for directed polymer models. 

\subsection*{Acknowledgement}
I am grateful to B\'alint Vir\'ag for his invaluable guidance throughout this project. I also thank Riddhipratim Basu for his insightful comments on related work.
\bibliography{bib}
\bibliographystyle{plain}
\end{document}